\newtheorem{theo}{Theorem}[section]
\newtheorem{ass}{Assumption}[section]
\newtheorem{lem}{Lemma}[section]
\newtheorem{cor}{Corollary}[section]
\newtheorem{rem}{Remark}[section]
\title[Synchronization of RSP with a
  network-based interaction]{\bf Synchronization of Reinforced Stochastic
  Processes with a Network-based Interaction}
\author[G. Aletti]{Giacomo Aletti}
\address{ADAMSS Center \& Department of Mathematics,
  Universit\`a degli Studi di Milano, 
via Saldini 50, 20133 Milan, Italy}
\email{giacomo.aletti@unimi.it}
\author[I. Crimaldi]{Irene Crimaldi}
\address{IMT School for Advanced Studies,
Piazza San Ponziano 6, 55100 Lucca, Italy}
\email{irene.crimaldi@imtlucca.it}
\author[A. Ghiglietti]{Andrea Ghiglietti}
\address{Department of
  Mathematics, Universit\`a degli Studi di Milano,
via Saldini 50, 20133 Milan, Italy}
\email{andrea.ghiglietti@unimi.it (Corresponding author)}
\date{\today}
\begin{document}

\maketitle

\begin{abstract}
Randomly evolving systems composed by elements which interact among
each other have always been of great interest in several scientific
fields. This work deals with the {\em synchronization} phenomenon,
that could be roughly defined as the tendency of different components
to adopt a common behavior. We continue the study of a model of
interacting {\em stochastic processes with reinforcement}, that
recently has been introduced in~\cite{cri-dai-lou-min}.  Generally
speaking, by reinforcement we mean any mechanism for which the
probability that a given event occurs has an increasing dependence on
the number of times that events of the same type occurred in the
past. The particularity of systems of such stochastic processes is
that synchronization is induced {\em along time} by the reinforcement
mechanism itself and does not require a large-scale limit. We focus on
the relationship between the topology of the network of the
interactions and the long-time synchronization phenomenon. After
proving the almost sure synchronization, we provide some CLTs in the
sense of stable convergence that establish the convergence rates and
the asymptotic distributions for both convergence to the common limit
and synchronization. The obtained results lead to the construction of
asymptotic confidence intervals for the limit random variable and of
statistical tests to make {\em inference on the topology of the
  network} given the observation of the reinforced stochastic
processes positioned at the vertices.
\end{abstract}

\paragraph{Keywords:}
\textit{Interacting Systems; Reinforced Stochastic Processes; Urn
  Models; Complex Networks; Synchronization; Asymptotic
  Normality}.
\\

\smallskip
\noindent {\em 2010 AMS classification:} 60F05, 60F15, 60K35, 62P35, 91D30.

%%%%%%%%%%%%%%%%%%%%%%%%%%%%%%%%%%%%%%%%%%%%%%%%%%%%%%%%%%
\section{Introduction}\label{introduction}
%%%%%%%%%%%%%%%%%%%%%%%%%%%%%%%%%%%%%%%%%%%%%%%%%%%%%%%%%%

The stochastic evolution of systems composed by elements which
interact among each other has always been of great interest in several
scientific fields: in neuroscience the brain is an active network
where billions of neurons interact in various ways in the cellular
circuits; many studies in biology focus on the interactions between
different sub-systems; social sciences and economics deal with
individuals that take decisions under the influence of other
individuals, and also in engineering and computer science some
research questions regard dynamic agents that form a complex pattern
of interactions (e.~g. \cite{bar-alb, hof, new}). In all these
frameworks, an usual phenomenon is the {\em synchronization}, that
could be roughly defined as the tendency of different components to
adopt a common behavior (we refer to \cite{are} for a detailed and
well structured survey on this topic, rich of examples and
references).  Synchronization has been shown to be of special
relevance in neural systems: the study of synchronization in neuronal
networks of various level, especially dealing with the role played by the
network topology, is crucial for the understanding of the brain
functional activities. In social life, preferences and beliefs are
partly transmitted by means of various forms of social interaction and
opinions are driven by the tendency of individuals to become more
similar when they interact. Hence, a collective phenomenon reflects
the result of the interactions among different individuals. The
underlying idea is that individuals have opinions that change
according to the influence of other individuals giving rise to a sort
of collective behavior, sometimes grouping together a part of the
whole population with similar social attributes. Moreover, in
economics, simple rules lead to interesting collective behaviors and
synchronization is one of them, since some of the activities done by
individual agents can become correlated in time due to their
interaction pattern. For example, the analysis of the International
Trade Network (ITN), also known as the World Trade Web (WTW), which is
the network related to the trade volume between countries, has
revealed a tight relationship between the topology of the ITN and the
dynamics of the Gross Domestic Products (GDPs) of the countries.  Due
to globalization effects, all economies are strongly correlated and
they will tend to follow a common trend (what are usually called
economic cycle). We can say that economies are synchronized in terms
of the GDP. Finally, consensus problems, understood as the ability of
a set of interacting dynamic agents to reach a unique and common value
in an asymptotic stable state, play a crucial role also in engineering
and computer science, particularly in automata theory. Therefore, it
is clear that the main goals of different research areas are twice:
(i) to figure out whether and when a (complete or partial)
synchronization in a dynamical system of agents can emerge out of
initially different statuses and (ii) to understand the interplay
between the network topology of the interactions among the agents and
the dynamics followed by the agents.  \\

\indent In this paper we continue the study of synchronization for a
model of interacting stochastic processes with reinforcement, that
recently has been introduced in \cite{cri-dai-lou-min}. Generally
speaking, by reinforcement in a stochastic dynamic we mean any
mechanism for which the probability that a given event occurs has an
increasing dependence on the number of times that events of the same
type occurred in the past. The main reason of the attention devoted to
reinforced stochastic processes is concerned with their dynamics,
which is suitable to describe random phenomena in different scientific
areas and can be easily implemented in several fields of application
(see e.g. \cite{pem} for a general survey). Our study is motivated by
the attempt of understanding the role of the reinforcement mechanism
in synchronization phenomena.\\

\indent More precisely, a {\em Reinforced Stochastic Process} (RSP)
can be defined as a stochastic process in which, along the time-steps,
different events occur in such a way that, for each event, the greater
the probability of occurence at a certain time, the greater the
probability of occurence at the next time.  Formally, given a finite
set $S$, for any $x$ in $S$, we have a stochastic process
$X(x)=(X_n(x))_{n\geq 1}$ with values in $\{0,1\}$ such that, for each
$n\geq 0$, $\sum_{x\in S} X_n(x)=1$ and
\begin{equation}\label{reinforced-1}
P(X_{n+1}(x)=1\, |\, Z_0(x),\, X_1(x),....,X_n(x))=Z_n(x)
\end{equation}
where
\begin{equation}\label{reinforced-2}
Z_{n}(x)=(1-r_{n-1})Z_{n-1}(x)+r_{n-1}X_{n}(x)
\end{equation}
with $0\leq r_{n-1}<1$ and $Z_0(x)$ possibly random. Indeed, the process
$X(x)$ describes the sequence of occurences of the ``event'' $x\in S$
and, if at time $n$, the ``event'' $x$ has taken place, then the
probability of its occurence at time $(n+1)$ increases.  Therefore, the
larger $Z_{n-1}(x)$, the higher the probability of having $Z_n(x)$
greater than $Z_{n-1}(x)$.  This {\em ``self-reinforcing property''},
also known as {\em ``preferential attachment rule''}, is a key feature
governing the dynamics of many biological, economic and social systems
(e.~g. \cite{pem}).  The prototype of reinforced stochastic processes
is the standard Eggenberger-P\'oya urn \cite{egg-pol, mah}: an urn
contains $a$ red and $b$ white balls and, at each discrete time, a
ball is drawn out from the urn and then it is put again inside the urn
together with one additional ball (or, more generally, with an
additional constant number of balls) of the same color. In this case,
we have $S=\{0,1\}$ with $1$ representing the color red and $0$ the
color white and for $Z_n=Z_n(1)$ and $X_n=X_n(1)$, we have
\begin{equation}
Z_n=\frac{a+\sum_{m=1}^{n}X_m}{a+b+n}.
\end{equation}
It is immediate to verify that
$$
Z_{0}=\frac{a}{a+b}\quad\mbox{and}\quad Z_{n+1}=(1-r_n)Z_{n}+r_nX_{n+1}
$$ with $r_n=(a+b+n+1)^{-1}$. As shown in \cite{cri-dai-lou-min} and as
we will see in this paper, the asymptotic behavior of $r_n$ is
essential to determine the results presented in this paper.  To this
purpose, here we highlight that for the Eggenberger-P\'oya urn we have
$\lim_n n r_n=1$.  We refer to \cite[Example 1.2]{cri-dai-lou-min}
for a meaningful case of reinforced stochastic process of the type
\eqref{reinforced-1}-\eqref{reinforced-2} where $\lim_n n^{\gamma}
r_n= c$ with $\gamma<1$ and $c\in (0,+\infty)$.  This example concerns an
opinion dynamics in an evolving population, modeled by a graph
evolving according to preferential attachment \cite{alb-bar, hof,
  new}.  \\

\indent To avoid complications, from now on we assume $S = \{0, 1\}$,
so that there is only one relevant variable $Z_n$, since $Z_n(1) =Z_n$
and $Z_n(2)=(1-Z_n)$.  This paper deals with a system of $N$
reinforced stochastic processes that interact according to a given set
of relationships among them. More precisely, suppose to have a
directed graph $G=(V, E)$ with $V=\{1,...,N\}$ as the set of vertices
and $E\underline{\subset} V\times V$ as the set of edges.  Each edge
$(j,k)\in E$ represents the fact that the vertex $j$ has a direct
influence on the vertex $k$. We assume also to associate a weight
$w_{j,k}\geq 0$ to each edge in order to quantify how much $j$ can
influence $k$.  A weight equal to zero means that the edge is not
present. We set $W=[w_{j,k}]_{j,k\in V\times V}$ ({\em weighted
  adjacency matrix}) and we assume the weights to be normalized so
that $\sum_{j=1}^N w_{j,k}=1$.  Hence, $w_{k,k}$ represents how much
the vertex $k$ is influenced by itself and $\sum_{j=1,j\neq k}^N
w_{j,k}\in[0,1]$ quantify how much the vertex $k$ is influenced by the
other vertices of the graph.  Finally, we suppose to have at each
vertex $j$ a reinforced stochastic process described by
$X^j=(X_{n,j})_{n\geq 0}$ and $Z^j=(Z_{n,j})_{n\geq 0}$ such that, for
each $n\geq 0$, the random variables $\{X_{n,j}:\,j=1,\dots, N\}$ are
conditional independent given ${\mathcal F}_{n-1}$ with
\begin{equation}\label{interacting-1-intro}
P(X_{n+1,j}=1\, |\, {\mathcal F}_n)=\sum_{k=1}^N w_{k,j} Z_{n,k}
\end{equation}
where, for each $k\in V$,
\begin{equation}\label{interacting-2-intro}
Z_{n,k}=(1-r_{n-1})Z_{n-1,k}+r_{n-1}X_{n,k}
\end{equation}
with $0\leq r_n<1$ and
${\mathcal F}_n=\sigma(Z_{0,k}: k\in V)\vee
\sigma(X_{m,j}: j\in V,\, m\leq n )$.\\

\indent As an example, we can imagine that $G=(V, E)$ represents a
network of $N$ individuals that at each time-step have to make a
choice between two possible alternatives $\{0,1\}$. We can formalize
this setting, assuming to have at each vertex $j$ an urn with red and
white balls. The color red represents the choice 1, the proportion
$Z_{n,j}$ of red balls at time $n$ in the urn at vertex $j$ represents
the inclination of the individual $j$ to adopt the choice 1 at time
$n$ and the random variable $X_{n,j}$ represents the choice of $j$ at
time $n$. It is natural to assume a self-reinforcing property for the
own inclination of each individual as in \eqref{interacting-2-intro}
and, moreover, it is natural to assume that the probability that the
individual $j$ will make the choice 1 at time $(n+1)$ is given by a
convex combination of $j$'s own inclination and the inclination of the
vertices that have an influence on $j$ according to their weights
$w_{k,j}$ as in \eqref{interacting-1-intro}. Another example is given
by the interacting version of \cite[Example 1.2]{cri-dai-lou-min},
which could be interpreted as a network of
different interacting populations or groups within a given population
in the same spirit as in \cite{contucci}.\\

\indent As already said at the beginning, our study deals with the
synchronization phenomenon of the stochastic processes $\{ (Z_{n,j})_n
:\, 1\leq j\leq N \}$ positioned at the vertices. The particularity of
systems of interacting reinforced stochastic processes is that {\em
  synchronization is induced along time} by the reinforcement
mechanism itself (independently of the fixed size $N$ of the network),
and so it does not require a large-scale limit (i.e. the limit for
$N\to +\infty$), which is usual in statistical mechanics for the study
of interacting particle systems. In particular, we focus on the
relationship between the topology of the interactions and the
long-time synchronization phenomenon: indeed, we show that the
eigenvalues and eigenvectors of the weighted adjacency matrix $W$
impact on the synchronization phenomenon. Our theoretical results
provide the rates of synchronization and the second-order asymptotic
distributions, in which the asymptotic variances have been expressed
as functions of the parameters governing the reinforced dynamics and
the eigen-structure of the weighted adjacency matrix. These results
lead to the construction of asymptotic confidence intervals for the
common limit random variable of the processes $(Z_{n,j})_n$ and to the
design of statistical tests to make inference on the topology of the
interaction network given the observation of the processes
$(Z_{n,j})_n$.
\\

\indent Regarding the literature review, we recall that interacting
two-colors urns have been considered in \cite{lau1, lau2}. Their main
results are proven when the probability of drawing a ball of a certain
color is proportional to $\rho^k$, where $\rho>1$ and $k$ is the
number of balls of this color. The interaction is of the mean-field
type. More precisely, the interacting reinforcement mechanism is the
following: at each step and for each urn draw a ball from either all
the urns combined with probability $p$, or from the urn alone with
probability $1-p$, and add a new ball of the same color to the
urn. The higher the interacting parameter $p$, the more memory is
shared between the urns. The main results can be informally stated as
follows: if $p\geq 1/2$, then all the urns fixate on the same color
after a finite time, and if $p<1/2$, then some urns fixate on a unique
color and others keep drawing both colors.  In \cite{cri-dai-min,
  dai-lou-min, sah} the authors consider interacting urns (precisely,
\cite{cri-dai-min} and \cite{dai-lou-min} deal with P\'olya urns and
\cite{sah} regards Friedman urns) in which the interaction can be
defined again as of the mean-field type, but the reinforcement scheme
is different from the previous one: indeed, the urns interact among
each other through the average composition in the entire system, tuned
by the interaction parameter $\alpha$, and the probability of drawing
a ball of a certain color is proportional to the number of balls of
that color, rather than to its exponential, leading to quite different
results. Synchronization and central limit theorems for the urn
proportions have been proven for different values of the tuning
parameter $\alpha$, providing different convergence rates and
asymptotic variances. In \cite{cri-dai-lou-min} the same mean-field
interaction is adopted, but the analysis has been extended to the
general class of reinforced stochastic processes, providing central
limit theorems also in functional form.  Differently from these works,
the model proposed in \cite{ale-ghi} concerns with a system of
generalized Friedman urns with irreducible mean replacement matrices
based on a general interaction structure, which includes the
mean-field interaction as a special case.  In particular, this
interaction acts as follows: the probability to sample a certain color
in each urn is a convex combination of the urn proportions of the
entire system, and the weights of such combinations are gathered in
the interacting matrix.  Combining the information provided by the
mean replacement matrices and by the interacting matrix, first and
second-order asymptotic results of the urn proportions have been
established, from which synchronization phenomenon has not been
observed.  Moreover, the structure of the interacting matrix allows a
decomposition in sub-systems of urns evolving with different
behaviors.  \\ \indent The present work have some issues in common
with \cite{cri-dai-lou-min, cri-dai-min} and \cite{ale-ghi}, but at
the same time some significant differences can be pointed out. In
particular, we share with \cite{ale-ghi} a general interacting
framework driven by the interacting matrix (here called weighted
adjacency matrix). However, here we mainly consider irreducible
interacting matrices and hence the decomposition of the system in
sub-groups is only sketched. Moreover, with respect to \cite{ale-ghi},
we study a class of stochastic processes for which we obtain
synchronization. This class does not include the generalized Friedman
urns studied in \cite{ale-ghi}; while it includes urn models with
not-irreducible mean replacement matrices, as P\'olya urns. With
\cite{cri-dai-lou-min} we share the class of reinforced stochastic
processes considered, which contain P\'olya urns also studied in
\cite{cri-dai-min}. However, with respect to \cite{cri-dai-lou-min,
  cri-dai-min}, we generalize the form of interaction since here we
deal with a general weighted adjacency matrix instead of just the
mean-field interaction. Indeed, the intent of this work is different
from the one of the above papers: after proving synchronization and
central limit theorems for some interesting cases, we focus on
analyzing the interplay between the topology of the interaction
network and the reinforced dynamics of the stochastic processes
positioned at the vertices of the network, providing some statistical
tools. On the other hand, we do not provide central limit theorems in
functional form as in \cite{cri-dai-lou-min} (although it is possible
to do it combining the results given here and the methods illustrated
in \cite{cri-dai-lou-min}) and we do not cover some cases considered
in \cite{cri-dai-lou-min, cri-dai-min}.  Also these cases are
interesting for synchronization phenomena, but we decided to not
include them in this paper since, as we will explain more deeply in
the sequel, they lead to quite different asymptotic results and so we
think that it is more appropriate to possibly deal with them
separately.  \\

\indent Finally, we mention that in literature we can find other works
concerning models of interacting urns, that consider interacting
mechanisms different from ours and are generally not focused on
synchronization. For instance, the model studied in \cite{mar-val}
describes a system of interacting units, modeled by P\'olya urns,
subject to perturbations and which occasionally break down. The
authors consider a system of interacting P\'olya urns arranged on a
$d$-dimensional lattice. Each urn contains initially $b$ black balls
and $1$ white ball. At each time step an urn is selected and a ball is
drawn from it: if the ball is white, a new white ball is added to the
urn; if it is black a ``fatal accident'' occurs and the urn becomes
unstable and it ``topples'' coming back to the initial configuration.
The toppling mechanism involves also the nearby urns. In
\cite{pag-sec} a class of discrete time stochastic processes generated
by interacting systems of reinforced urns is introduced and its
asymptotic properties analyzed. Given a countable set of urns, at each
time a ball is independently sampled from every urn in the system and
in each urn a random number of balls of the same color of the
extracted ball is added. The interaction arises since the number of
added balls depends also on the colors generated by the other urns as
well as on a common random factor.  In \cite{cir} the authors consider
a network of interacting urns displaced over a lattice. Every urn is
P\'olya-like and its reinforcement matrix is not only a function of
time (time contagion) but also of the behavior of the neighboring urns
(spatial contagion), and of a random component, which can represent
either simple fate or the impact of exogenous factors. In this way a
non-trivial dependence structure among the urns is built, and the
given construction is used to model different phenomena characterized
by cascading failures such as power grids and financial networks.  In
\cite{ben, che-luc, lima} a graph-based model, with urns at each
vertex and pair-wise interactions, is considered. Given a finite
connected graph, place a bin at each vertex. Two bins are called a
pair if they share an edge. At discrete times, a ball is added to each
pair of bins. In a pair of bins, one of the bins gets the ball with
probability proportional to its current number of balls raised by some
fixed power $\alpha>0$. The authors characterize the limiting behavior
of the proportion of balls in the bins for different values of the
parameter $\alpha$.  \\

\indent The rest of the paper is organized as follows. In
Section~\ref{section_model} we introduce the notation, we describe the
model and the leading
assumptions. Section~\ref{section_asymptotic_results} is concerned
with the main results established in the paper, while the relative
proofs are gathered in Section~\ref{section_proofs}.  Some meaningful
examples of reinforced random processes with a network-based
interaction are described in Section~\ref{section_examples}, in order
to apply the theoretical results provided in the paper to some
practical cases and to establish the corresponding asymptotic
behaviors.  In Section~\ref{section_applications} we illustrate some
statistical tools coming from the obtained theoretical results. In
particular, we propose an inferential procedure to test the structure
of the network which the interaction between the reinforced stochastic
processes is based on.  Finally, Section~\ref{section_variants} is
concerned with some possible variants of the model here presented. For
reader's convenience, the paper is also enriched by an exhaustive
appendix containing necessary definitions and technical results.

%%%%%%%%%%%%%%%%%%%%%%%%%%%%%%%%%%%%%%%%%%%%%%%%%%%%%%%%%%%%%%%%%%%%%%%%%%%%%%
\section{The Model}\label{section_model}
%%%%%%%%%%%%%%%%%%%%%%%%%%%%%%%%%%%%%%%%%%%%%%%%%%%%%%%%%%%%%%%%%%%%%%%%%%%%%%

Throughout the paper, we will adopt the following notation:
\begin{itemize}
\item[a)] Given a complex number $z$, ${\mathcal Re}(z)$ and
${\mathcal
    Im}(z)$ denote its real and imaginary parts, respectively,
  $\overline{z}$ denotes its conjugate and $|z|$ its modulus.
\item[b)] If $A$ is a matrix with complex entries, then $\overline{A}$
  denotes its conjugate, i.e. the matrix whose entries are the
  conjugates of the entries of $A$, and $A^{\top}$ indicates its
  transpose. Moreover, we denote by $|A|$ the sum of the modulus of
  its entries so that, if $A$ is equal to the row-column product of
  two matrices $B,\, C$, we have $|A|\leq |B|\, |C|$. Finally, $Sp(A)$
  indicates its spectrum, i.e. the set of all its eigenvalues repeated
  with multiplicity, and $\lambda_{\max}(A)$ indicates the sub-set of
  $Sp(A)$ containing the eigenvalues with maximum real part,
  i.e. $\lambda^*\in \lambda_{\max}(A)$ whenever ${\mathcal
    Re}(\lambda^*)=\max\{ {\mathcal Re}(\lambda):\, \lambda\in Sp(A)
  \}$. Moreover, we will denote by $I$ the identity matrix, whose
  dimension depends on the context.
\item[c)] A vector $\mathbf{v}$ is considered as a matrix with a
  single column, and hence all the notations stated in b) apply to
  $\mathbf{v}$. Moreover, $\|\mathbf{v}\|$ indicates the norm of the
  vector $\mathbf{v}$, i.e. $ \|\mathbf{v} \|^2 =
  \overline{\mathbf{v}}^{\top}\mathbf{v}$.  Finally, we will denote by
  $\mathbf{1}$ and by $\mathbf{0}$ the vectors whose entries are all
  ones and all zeros, respectively.
\end{itemize}

\noindent We now present the model.  Suppose to have a directed graph
$G=(V, E)$ with $V=\{1,...,N\}$ as the set of vertices and
$E\underline{\subset} V\times V$ as the set of edges.  Each edge
$(j,k)\in E$ represents the fact that the vertex $j$ has a direct
influence on the vertex $k$. We assume also to associate a weight
$w_{jk}\geq 0$ to each link in order to quantify how much $j$ can
influence $k$.  A weight equal to zero means that the edge is not
present. We set $W=[w_{j,k}]_{j,k\in V\times V}$ ({\em weighted
  adjacency matrix}) and we assume the weights to be normalized so
that $\sum_{j=1}^N w_{j,k}=1$. Finally, we suppose to have at each
vertex $j$ a reinforced stochastic process described by
$X^j=(X_{n,j})_{n\geq 0}$ such that, for each $n\geq 0$, the random
variables $\{X_{n+1,j}:\,j\in V\}$ are conditional independent given
${\mathcal F}_{n}$ with
\begin{equation}\label{interacting-1}
P(X_{n+1,j}=1\, |\, {\mathcal F}_n)=\sum_{k=1}^N w_{k,j} Z_{n,k}
\end{equation}
where, for each $k\in V$,
\begin{equation}\label{interacting-2}
Z_{n,k}=(1-r_{n-1})Z_{n-1,k}+r_{n-1}X_{n,k}
\end{equation}
with $0\leq r_{n-1}<1$ constants and ${\mathcal F}_n=\sigma(Z_{0,k}: k\in V)\vee
\sigma(X_{m,j}: j\in V,\, m\leq n )$. \\

\indent To express the above dynamics in a compact form, let us define
the vectors $\mathbf{X}_{n}=(X_{n,1},..,X_{n,N})^{\top}$ and
$\mathbf{Z}_{n}=(Z_{n,1},..,Z_{n,N})^{\top}$.  Hence, the dynamics can be
expressed as follows:
\begin{equation}\label{eq:dynamic-0}
E[\mathbf{X}_{n+1}|\mathcal{F}_{n}]=W^{\top}\,\mathbf{Z}_{n}
\end{equation}
where
\begin{equation}\label{eq:dynamic}
\mathbf{Z}_{n}\ =
\left(1-r_{n-1}\right)\mathbf{Z}_{n-1}\ +\ r_{n-1}\mathbf{X}_{n}.
\end{equation}
Moreover, the assumption about the normalization of the matrix $W$ can be
written as $W^{\top}\mathbf{1}=\mathbf{1}$.\\

\indent Throughout all the paper, we assume that the following additional
conditions hold.

\begin{ass}\label{ass:W_irreducible}
The weighted adjacency matrix $W$ is irreducible.
\end{ass}
This condition reflects a situation in which all the vertices are
connected among each others and hence there are no sub-systems with
independent dynamics (see \cite{ale-ghi} and
Subsection~\ref{subsection_reducible_W} for further details).

\begin{ass}\label{ass:r_n}
There exists a constant $c>0$ and $1/2<\gamma\leq 1$ such that
\begin{equation}\label{eq:condition_r_n_main}
\lim_{n\rightarrow\infty} n^{\gamma} r_n\ =\ c.
\end{equation}
\end{ass}
When $\gamma=1$, for a particular case covered by our analysis, we
will require a slightly stricter condition
than~\eqref{eq:condition_r_n_main}, that is:
\begin{equation}\label{eq:condition_r_n_bis_main}
nr_n - c\ =\ O\left(n^{-1}\right).
\end{equation}

This paper is concerned with the case $1/2 < \gamma \leq 1$, while the
case $\gamma \leq 1/2$ is not considered. Indeed, in
\cite{cri-dai-lou-min} it was established that, under soft assumptions
on the initial distribution, if the mean-field interaction is present
and $\sum_n r_n^2=+\infty$, then all the stochastic processes
$\{(Z_{n,j})_n:\, 1\leq j\leq N \}$ converge almost surely to the same
random variable $Z_\infty\in \{0,1\}$ a.s. Hence, although this case
is interesting for synchronization, we decided to focus here on the
case $1/2 <\gamma \leq 1$, for which soft assumptions on the initial
distribution lead to a limit random variable not concentrated only on
$\{0,1\}$.  \\

\indent Finally, we require the following condition:
\begin{ass}\label{ass:W_diagonalizable}
The weighted adjacency matrix $W$ is diagonalizable.
\end{ass}

This assumption implies that there exists a non-singular matrix
$\widetilde{U}$ such that
$\widetilde{U}^{\top}W(\widetilde{U}^{\top})^{-1}$ is diagonal with
elements $\lambda_j\in Sp(W)$.  Notice that each column $\mathbf{u}_j$
of $\widetilde{U}$ is a left eigenvector of $W$ associated to
$\lambda_j$.  Without loss of generality, we set $\|\mathbf{u}_j\|=1$.
Moreover, when the multiplicity of some $\lambda_j$ is bigger than
one, we set the corresponding eigenvectors to be orthogonal.  Then, if
we define $\widetilde{V}=(\widetilde{U}^{\top})^{-1}$, we have that
each column $\mathbf{v}_j$ of $\widetilde{V}$ is a right eigenvector
of $W$ associated to $\lambda_j$ such that
\begin{equation}\label{eq:relazioni-0}
\mathbf{u}_j^{\top}\,\mathbf{v}_j=1,\quad\mbox{ and }\qquad
\mathbf{u}_h^{\top}\,\mathbf{v}_j=0,\ \forall h\neq j.
\end{equation}

These constraints combined with the above assumptions on $W$
(precisely, $w_{j,k}\geq 0$, $W^{\top}\mathbf{1}=\mathbf{1}$ and Assumption
\ref{ass:W_irreducible}) imply, by Frobenius-Perron Theorem, that
$\lambda_1:=1$ is an eigenvalue of $W$ with multiplicity one,
$\lambda_{\max}(W)=\{1\}$ and
\begin{equation}\label{eq:relazioni-1}
\mathbf{u}_1=N^{-1/2}\mathbf{1}, \qquad
N^{-1/2}{\mathbf 1}^{\top}{\mathbf v}_1=1\qquad
\mbox{and}\qquad
[\mathbf{v}_1]_k:=v_{1,k}\in (0,+\infty)\; \forall k=1,\dots, N.
\end{equation}

Finally, throughout all the paper, we will use $U$ and $V$ to indicate
the sub-matrices of $\widetilde{U}$ and $\widetilde{V}$, respectively,
whose columns are the left and the right eigenvectors of
$W$ associated to $Sp(W)\setminus \{1\}$, that is
$\{\mathbf{u}_2,..,\mathbf{u}_N\}$ and
$\{\mathbf{v}_2,..,\mathbf{v}_N\}$, respectively, and we will denote
by $\lambda^*$ an eigenvalue belonging to $Sp(W)\setminus\{1\}$ such
that
$$ {\mathcal Re}(\lambda^*)=\max\left\{ {\mathcal Re}(\lambda_j):\,
\lambda_j\in Sp(W)\setminus\{1\}\right\}.$$

%%%%%%%%%%%%%%%%%%%%%%%%%%%%%%%%%%%%%%%%%%%%%%%%%%%%%%%%%%%%%%%%%%%%%%%%%%%%%%
\section{Main results}\label{section_asymptotic_results}
%%%%%%%%%%%%%%%%%%%%%%%%%%%%%%%%%%%%%%%%%%%%%%%%%%%%%%%%%%%%%%%%%%%%%%%%%%%%%%

In this section, we present our main results, which regard the
asymptotic behavior of the process $\mathbf{Z}_{n}$.  We refer to the
appendix for a brief review of the notion of stable convergence.
\\

\indent Let us recall the assumptions stated in Section
\ref{section_model}.  We start by providing a first-order asymptotic
result concerning the almost sure convergence of $\mathbf{Z}_{n}$.

\begin{theo}\label{th:sincro} (Synchronization)\\
There exists a random variable $Z_{\infty}$ with values in $[0,1]$ such that
\begin{equation}\label{eq:as_convergence_Z}
\mathbf{Z}_{n}\ \stackrel{a.s.}{\longrightarrow}\ Z_{\infty}\mathbf{1}.
\end{equation}
\end{theo}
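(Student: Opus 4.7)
The plan is to exploit the spectral structure of $W$ provided by Assumption \ref{ass:W_diagonalizable} to decompose $\mathbf{Z}_n$ into a component along $\mathbf{1}$, which will furnish the candidate limit $Z_\infty$ as a bounded martingale, and into transverse components along the eigenvectors associated with $Sp(W)\setminus\{1\}$, which I must show vanish almost surely.

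First I would rewrite the dynamics in the compact form
$$
\mathbf{Z}_{n+1}\ =\ \bigl(I - r_n(I - W^{\top})\bigr)\mathbf{Z}_n\ +\ r_n\,\Delta \mathbf{M}_{n+1},\qquad \Delta \mathbf{M}_{n+1}\ :=\ \mathbf{X}_{n+1} - W^{\top}\mathbf{Z}_n,
$$
where $\Delta \mathbf{M}_{n+1}$ is an $\mathcal{F}_{n+1}$-martingale increment, uniformly bounded by construction. Since $W\mathbf{v}_1 = \mathbf{v}_1$, the scalar process $M_n := N^{-1/2}\mathbf{v}_1^{\top}\mathbf{Z}_n$ is a martingale, and by \eqref{eq:relazioni-1} it is a convex combination of the $Z_{n,k}\in[0,1]$, hence bounded in $[0,1]$; it therefore converges a.s.\ to a $[0,1]$-valued random variable which I \emph{define} to be $Z_\infty$. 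Using the biorthogonality $\mathbf{u}_h^{\top}\mathbf{v}_j = \delta_{hj}$ together with $\mathbf{u}_1 = N^{-1/2}\mathbf{1}$, the spectral decomposition of $\mathbf{Z}_n$ in the basis $\{\mathbf{u}_j\}$ of right eigenvectors of $W^{\top}$ reads
$$
\mathbf{Z}_n\ =\ M_n\,\mathbf{1}\ +\ \sum_{j=2}^{N} Y_n^{(j)}\,\mathbf{u}_j,\qquad Y_n^{(j)}\ :=\ \mathbf{v}_j^{\top}\mathbf{Z}_n,
$$
so the synchronization claim reduces to proving $Y_n^{(j)} \stackrel{a.s.}{\longrightarrow} 0$ for every $j \geq 2$.

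For each such $j$, the scalar coefficient obeys the recursion
$$
Y_{n+1}^{(j)}\ =\ \bigl(1 - r_n(1-\lambda_j)\bigr)\,Y_n^{(j)}\ +\ r_n\,\mathbf{v}_j^{\top}\Delta \mathbf{M}_{n+1},
$$
and by Frobenius--Perron, together with Assumption \ref{ass:W_irreducible}, $a_j := 1 - {\mathcal Re}(\lambda_j) > 0$. Conditioning on $\mathcal{F}_n$ kills the cross term in $|Y_{n+1}^{(j)}|^2$, yielding
$$
E\bigl[|Y_{n+1}^{(j)}|^2\mid\mathcal{F}_n\bigr]\ \leq\ \bigl(1 - 2r_n a_j + r_n^2|1-\lambda_j|^2\bigr)\,|Y_n^{(j)}|^2\ +\ r_n^2 C_j,
$$
with $C_j$ a deterministic bound for the conditional variance of $\mathbf{v}_j^{\top}\Delta \mathbf{M}_{n+1}$, available since each entry of $\mathbf{X}_{n+1}$ is a conditionally independent Bernoulli. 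Assumption \ref{ass:r_n} with $1/2<\gamma\leq 1$ gives $\sum_n r_n^2 < \infty$ and $\sum_n r_n = \infty$, so the Robbins--Siegmund lemma implies that $|Y_n^{(j)}|^2$ converges a.s.\ and $\sum_n r_n a_j|Y_n^{(j)}|^2 < \infty$ a.s.; together these force $|Y_n^{(j)}|^2 \to 0$ a.s. Combined with the convergence of $M_n$, the decomposition above then delivers $\mathbf{Z}_n \to Z_\infty\mathbf{1}$ a.s.

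The principal obstacle is that the subdominant eigenvalues $\lambda_j$ can be complex with $|1-\lambda_j|$ possibly small, so the transverse recursion is not a contraction on a step-by-step basis. Working with the modulus $|Y_n^{(j)}|^2$ is precisely what circumvents this: after squaring, the effective rate is governed by $a_j = 1 - {\mathcal Re}(\lambda_j)$, which the Perron--Frobenius conclusion $\lambda_{\max}(W) = \{1\}$ (with simple eigenvalue $1$) guarantees to be strictly positive uniformly in $j \geq 2$, while the $r_n^2$ error produced by the imaginary direction is summable thanks to $\gamma > 1/2$.
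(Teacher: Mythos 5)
Your proof is correct and follows the paper's decomposition almost exactly: the $\mathbf{v}_1$-coordinate $N^{-1/2}\mathbf{v}_1^{\top}\mathbf{Z}_n$ is a bounded martingale converging to $Z_\infty$, and the subdominant coordinates $Y_n^{(j)} = \mathbf{v}_j^{\top}\mathbf{Z}_n$ (which are just the components of the paper's $\mathbf{Z}_{V,n} = V^{\top}\widehat{\mathbf{Z}}_n$) are shown to vanish by squaring, using that the martingale-increment term drops out under conditioning and that $a_j = 1-{\mathcal Re}(\lambda_j)>0$ supplies the negative drift. The only genuine difference is how the a.s.\ limit of the transverse part is identified as zero. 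The paper uses the almost-supermartingale theorem only to get a.s.\ convergence of $\|\mathbf{Z}_{V,n}\|^2$, and then separately establishes $E[\|\mathbf{Z}_{V,n}\|^2]\to 0$ by iterating the deterministic recursion $x_{n+1}\le(1-2a^*r_n)x_n + Cr_n^2$ and invoking a lemma from \cite{cri-dai-lou-min}. You instead use the full Robbins--Siegmund conclusion — in particular the a.s.\ finiteness of $\sum_n r_n a_j |Y_n^{(j)}|^2$ — together with $\sum_n r_n = \infty$ to force the a.s.\ limit to be zero directly, pathwise. Your finish is marginally more self-contained (no external lemma needed, no detour through $L^1$), while the paper's argument on the vector norm is more compact notationally; both are sound, and both rest on the same hypotheses ($1/2<\gamma\le 1$ to get $\sum r_n^2<\infty$, and irreducibility plus Frobenius--Perron to get $a_j>0$).
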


This result states that the stochastic processes $\{(Z_{n,j})_n:\,
1\leq j\leq N\}$ located at the different vertices synchronize,
i.e. all of them converge almost surely toward the same random
variable $Z_\infty$.  It is interesting to note that this result holds
true without any assumption on the initial configuration
$\mathbf{Z}_{0}$ and for any choice of the weighted adjacency matrix
$W$ with the required assumptions.  \\

\indent We now focus on the second-order asymptotic results concerning
the process $(\mathbf{Z}_{n})_n$.  First, we present a central limit
theorem in the sense of stable convergence, that establishes the rate
of convergence to the limit $Z_{\infty}\mathbf{1}$ determined in
Theorem~\ref{th:sincro} and the relative asymptotic random variance.

\begin{theo}\label{thm:asymptotics_Z}(CLT for convergence)\\
The following hold:
\begin{itemize}
\item[(a)] For $1/2<\gamma<1$, then
\begin{equation*}
n^{\gamma-\frac{1}{2}}
\left(\mathbf{Z}_{n}-Z_{\infty}\mathbf{1}\right)\
\stackrel{d}{\longrightarrow}\
\mathcal{N}
\left(\ 0\ ,\ Z_{\infty}(1-Z_{\infty})\widetilde{\Sigma}_{\gamma}\ \right),
\ \ \ \ stably
\end{equation*}
where
\begin{equation}\label{def:sigmatilde-gamma}
\widetilde{\Sigma}_{\gamma}:=
\widetilde{\sigma}_{\gamma}^2{\mathbf 1}{\mathbf 1}^{\top}
\qquad\mbox{ and }\qquad
\widetilde{\sigma}_{\gamma}^2:=
\frac{c^2\,\|\mathbf{v}_1\|^2}{N(2\gamma-1)}
>0.
\end{equation}

\item[(b)] For $\gamma=1$, if ${\mathcal Re}(\lambda^{*})<1-(2c)^{-1}$, then
\begin{equation*}\label{eq:CLT_Z_less}
\sqrt{n}\left(\mathbf{Z}_{n}-Z_{\infty}\mathbf{1}\right)\
\stackrel{d}{\longrightarrow}\
\mathcal{N}
\left(\ 0\ ,\ Z_{\infty}(1-Z_{\infty})(\widetilde{\Sigma}_1+\widehat{\Sigma}_1)\
\right),\ \ \ \
stably
\end{equation*}
where $\widetilde{\Sigma}_1$ is defined as in~\eqref{def:sigmatilde-gamma}
with $\gamma=1$,
\begin{equation}\label{def:sigmahat1}
\widehat{\Sigma}_1:=U\widehat{S}_1U^{\top}\qquad\mbox{ and }\qquad
[\widehat{S}_1]_{h,j}:=
\frac{c^2}{2c-c(\lambda_h+\lambda_j)-1}(\mathbf{v}_{h}^{\top}\mathbf{v}_{j}),\
\mbox{ with }\ 2\leq h,j\leq N.
\end{equation}

\item[(c)] For $\gamma=1$, if ${\mathcal Re}(\lambda^{*})=1-(2c)^{-1}$
  and~\eqref{eq:condition_r_n_bis_main} holds, then
\begin{equation*}\label{eq:CLT_Z_less}
\frac{\sqrt{n}}{\sqrt{\ln(n)}}\left(\mathbf{Z}_{n}-Z_{\infty}\mathbf{1}\right)\
{\longrightarrow}\
\mathcal{N}\left(\ 0\ ,\ Z_{\infty}(1-Z_{\infty})\widehat{\Sigma}_1^*\ \right),
\ \ \ \ stably
\end{equation*}
where
\begin{equation}\label{def:sigmahat1_star}
\widehat{\Sigma}_1^*:=U\widehat{S}_1^*U^{\top}\qquad\mbox{ and }\qquad
[\widehat{S}_1^*]_{h,j}:=\begin{cases}
c^2 (\mathbf{v}_{h}^{\top}\mathbf{v}_{j})
\quad &\mbox{if } \lambda_h+\lambda_j = 2-c^{-1},
\\
0\quad &\mbox{if } \lambda_h+\lambda_j\neq 2-c^{-1},
\end{cases}
\
\mbox{ with }\ 2\leq h,j\leq N.
\end{equation}
\end{itemize}
\end{theo}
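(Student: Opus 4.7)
The plan is to decompose $\mathbf{Z}_n-Z_\infty\mathbf{1}$ into a \emph{synchronized mean} component and a \emph{transverse} component via the eigen-structure of $W$, analyze each separately, and combine them by stable convergence. Set $M_n:=\mathbf{v}_1^{\top}\mathbf{Z}_n$ and $\hat{\mathbf{Y}}_n:=V^{\top}\mathbf{Z}_n$. The biorthogonality relations \eqref{eq:relazioni-0}--\eqref{eq:relazioni-1} yield the key identity
\begin{equation*}
\mathbf{Z}_n-Z_\infty\mathbf{1}\ =\ (N^{-1/2}M_n-Z_\infty)\mathbf{1}\ +\ U\,\hat{\mathbf{Y}}_n.
\end{equation*}
Since $W\mathbf{v}_1=\mathbf{v}_1$, the dynamics give $E[M_n\mid\mathcal{F}_{n-1}]=M_{n-1}$, so $M_n$ is a bounded $L^2$-martingale with a.s.\ limit $N^{1/2}Z_\infty$ (Theorem~\ref{th:sincro}). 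Since $\mathbf{v}_j^{\top}W^{\top}=\lambda_j\mathbf{v}_j^{\top}$, each component $\hat{Y}_{n,j}$ ($j\geq 2$) satisfies a contractive stochastic-approximation recursion
\begin{equation*}
\hat{Y}_{n,j}\ =\ \bigl(1-r_{n-1}(1-\lambda_j)\bigr)\hat{Y}_{n-1,j}\ +\ r_{n-1}\mathbf{v}_j^{\top}\bigl(\mathbf{X}_n-W^{\top}\mathbf{Z}_{n-1}\bigr),
\end{equation*}
driven by a martingale increment whose asymptotic conditional covariance between the $h$ and $j$ components is $Z_\infty(1-Z_\infty)\,\mathbf{v}_h^{\top}\mathbf{v}_j$ (using synchronization).

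I would first handle the martingale part: its quadratic variation equals $\sum_{k\le n}r_{k-1}^2\sum_j v_{1,j}^2 p_{k,j}(1-p_{k,j})$ with $p_{k,j}=[W^{\top}\mathbf{Z}_{k-1}]_j\to Z_\infty$; under Assumption~\ref{ass:r_n} the tail sum $\sum_{k>n}r_{k-1}^2$ is of order $c^2 n^{1-2\gamma}/(2\gamma-1)$ for $\gamma<1$ and of order $c^2/n$ for $\gamma=1$, so a standard stable martingale CLT (appendix) gives
\begin{equation*}
n^{\gamma-1/2}(N^{-1/2}M_n-Z_\infty)\ \xrightarrow{\ d\ }\ \mathcal{N}\bigl(0,\tilde{\sigma}_\gamma^{\,2}Z_\infty(1-Z_\infty)\bigr),\qquad\text{stably},
\end{equation*}
which after multiplication by $\mathbf{1}$ accounts for $\tilde{\Sigma}_\gamma$. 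For the transverse part, iterating the recursion produces a deterministic contraction $\prod_{k\le n}(1-r_{k-1}(1-\lambda_j))\sim\exp(-(1-\lambda_j)\sum_{k}r_{k})$, which is super-polynomial in $n$ when $\gamma<1$; hence $U\hat{\mathbf{Y}}_n=o(n^{-(\gamma-1/2)})$ almost surely and case~(a) follows from the martingale piece alone. When $\gamma=1$ the same factor decays like $n^{-c(1-\lambda_j)}$, and in case~(b) the hypothesis $c(1-{\mathcal Re}(\lambda^*))>1/2$ keeps every eigen-component strictly in the super-critical regime; balancing the injected noise variance $r_{n-1}^2\sim c^2 n^{-2}$ against the contraction yields $\sqrt{n}\,\hat{\mathbf{Y}}_n\Rightarrow\mathcal{N}(0,\hat{S}_1 Z_\infty(1-Z_\infty))$ stably, with the denominator $2c-c(\lambda_h+\lambda_j)-1$ in $[\hat{S}_1]_{h,j}$ appearing as the spectral gap of the discrete Lyapunov operator associated with the recursion. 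Case~(c) is the resonant one: those denominators vanish exactly on the pairs $\lambda_h+\lambda_j=2-c^{-1}$, whose components grow by an extra factor $\sqrt{\ln n}$, and the slower rescaling $\sqrt{n/\ln n}$ isolates the cut-off matrix $\hat{S}_1^*$. The strengthened hypothesis~\eqref{eq:condition_r_n_bis_main} is needed here to discard a first-order correction from $nr_n-c$ that would otherwise persist at the logarithmic scale.

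To glue the two parts, observe that since $M_n$ is a uniformly integrable martingale one has $E[Z_\infty\mid\mathcal{F}_n]=N^{-1/2}M_n$, and therefore
\begin{equation*}
E\bigl[(N^{-1/2}M_n-Z_\infty)\,\hat{\mathbf{Y}}_n^{\top}\bigr]\ =\ 0\qquad\text{for every }n.
\end{equation*}
Combined with the stable convergence of each piece---whose limiting conditional covariances involve only $Z_\infty$---this forces the joint limit to be an uncorrelated, and therefore independent, pair of centred Gaussians, so the two variance contributions add to give $\tilde{\Sigma}_1+\hat{\Sigma}_1$ in case~(b). In case~(c) the martingale piece disappears from the scaled limit automatically because its natural rate $\sqrt{n}$ is strictly faster than $\sqrt{n/\ln n}$. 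The step I expect to be hardest is the stable vector-valued CLT for the transverse component in cases~(b) and~(c): the recursion is a complex, time-inhomogeneous, non-normal stochastic approximation whose noise covariance converges to a \emph{random} matrix built from $Z_\infty$, so one needs a stable (rather than ordinary) CLT adapted to such schemes; and in the resonant case~(c) one must carry out an eigenvalue-pair-by-eigenvalue-pair bookkeeping to separate the $\sqrt{\ln n}$-contributions from the sub-dominant ones.
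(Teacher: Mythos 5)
Your decomposition is exactly the paper's: $\widetilde{Z}_n=N^{-1/2}\mathbf{v}_1^{\top}\mathbf{Z}_n$, $\widehat{\mathbf{Z}}_n=UV^{\top}\mathbf{Z}_n$, and $\mathbf{Z}_n-Z_\infty\mathbf{1}=(\widetilde{Z}_n-Z_\infty)\mathbf{1}+\widehat{\mathbf{Z}}_n$. The martingale CLT for $\widetilde{Z}_n$ and the contractive stochastic-approximation picture for $\widehat{\mathbf{Z}}_n$ are also the right ingredients. However, there are two genuine gaps in the way you assemble them.

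First, for case~(a) you conclude $U\hat{\mathbf{Y}}_n=o(n^{-(\gamma-1/2)})$ almost surely on the grounds that the deterministic contraction $\prod_k(1-r_{k-1}(1-\lambda_j))$ is super-polynomial for $\gamma<1$. That argument only kills the contribution of the \emph{initial condition}. The transverse component also contains the accumulated martingale noise, which is not super-polynomially small: the paper's Theorem~\ref{thm:asymptotics_Z_hat}(a) shows $n^{\gamma/2}\widehat{\mathbf{Z}}_n$ converges stably to a non-degenerate Gaussian kernel, so $\widehat{\mathbf{Z}}_n$ is of stochastic order $n^{-\gamma/2}$. The correct reason $n^{\gamma-1/2}\widehat{\mathbf{Z}}_n$ disappears is the extra factor $n^{-(1-\gamma)/2}$, giving convergence in probability (not a.s.) to zero; the paper states this explicitly.

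Second, and more seriously, your gluing argument for case~(b) does not hold. You observe $E[(N^{-1/2}M_n-Z_\infty)\hat{\mathbf{Y}}_n^{\top}]=0$ for every $n$ and assert that this "forces the joint limit to be an uncorrelated, and therefore independent, pair of centred Gaussians, so the two variance contributions add." Uncorrelatedness at finite $n$ does not pass to a joint stable limit, and marginal stable convergence of each piece to a Gaussian kernel does not give joint stable convergence, nor does it make the limiting pair jointly Gaussian. What makes the paper's proof work is qualitatively different: the CLT for $\widetilde{Z}_n$ (Theorem~\ref{thm:asymptotics_Z_tilde}) is established in the sense of almost sure \emph{conditional} convergence (hence stable in the strong sense), while the CLT for $\widehat{\mathbf{Z}}_n$ is ordinary stable, and $\widehat{\mathbf{Z}}_n$ is $\mathcal{F}_n$-measurable. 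Theorem~\ref{blocco} then says precisely that a stably-converging, $\mathcal{F}_n$-measurable sequence paired with a strongly-stably-converging sequence converges jointly and stably to the product kernel, which is what produces the sum $\widetilde{\Sigma}_1+\widehat{\Sigma}_1$. Without invoking such a lemma (and without proving the stronger mode of convergence for the $\widetilde{Z}_n$ piece), the additivity of the asymptotic covariances is unjustified. Case~(c) is rescued by the vanishing of the martingale contribution at the $\sqrt{n/\ln n}$ scale, as you say, so the gap is concentrated in case~(b) and in the a.s.\ claim of case~(a).
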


Notice that the matrix $\widehat{S}_1^*$ defined
in~\eqref{def:sigmahat1_star} can never be null, as stated more ahead
in Theorem~\ref{thm:real_positive_variances}.

\begin{rem}
\rm Notice that $\widetilde{\sigma}_{\gamma}^2$ is decreasing with the
size $N$ of the network and so, for cases (a) and (b), the larger the
size of the network, the lower the asymptotic variance.  Moreover,
fixed $N$ and $\gamma$, since by~\eqref{eq:relazioni-0}
and~\eqref{eq:relazioni-1} we have
$\|\mathbf{v}_1\|^2=\|\mathbf{u}_1+(\mathbf{v}_1-\mathbf{u}_1)\|^2=
1+\|\mathbf{v}_1-\mathbf{u}_1\|^2\geq 1$ and $\|\mathbf{v}_1\|^2\leq
N$, we can obtain the following lower and upper bounds for
$\widetilde{\sigma}_{\gamma}^2$ (not depending on $W$):
$$\frac{c^2}{N(2\gamma-1)}\
\leq\ \widetilde{\sigma}_{\gamma}^2\
\leq\ \frac{c^2}{(2\gamma-1)},$$
where the lower bound is achieved when $\mathbf{v}_1=\mathbf{u}_1$,
i.e.  when $W$ is doubly stochastic.
\end{rem}

Given the long-run synchronization stated in Theorem~\ref{th:sincro},
it is interesting to establish the rate of
synchronization, that is the convergence rate of the difference
$(Z_{n,j}-Z_{n,k})_n$ to zero for $j\neq k$ and to characterize the
relative asymptotic distribution.  The following result achieves this
goal.

\begin{theo}\label{thm:asymptotics_Z_j_Z_k} (CLT for synchronization)\\
For any $j,k\in\{1,..,N\}$, $j\neq k$, we have:
\begin{itemize}
\item[(a)] For $1/2<\gamma<1$, then
\begin{equation*}\label{eq:CLT_Z_j_Z_k_gamma}
n^{\frac{\gamma}{2}}\left(Z_{n,j}-Z_{n,k}\right)\ {\longrightarrow}\
\mathcal{N}\left(\ 0\ ,\ Z_{\infty}(1-Z_{\infty})\Sigma_{\gamma, j,k}\ \right),
\ \ \ \ stably
\end{equation*}
where $\Sigma_{\gamma,j,k}:=
[\widehat{\Sigma}_\gamma]_{j,j}+
[\widehat{\Sigma}_\gamma]_{k,k}-2[\widehat{\Sigma}_\gamma]_{j,k}$,
\begin{equation}\label{def:sigmahat_gamma}
\widehat{\Sigma}_{\gamma}:=U\widehat{S}_{\gamma}U^{\top}\qquad\mbox{ and }\qquad
[\widehat{S}_{\gamma}]_{h,j}:=
\frac{c}{2-(\lambda_h+\lambda_j)} (\mathbf{v}_{h}^{\top}\mathbf{v}_{j}),\
\mbox{ with }\ 2\leq h,j\leq N.
\end{equation}

\item[(b)] For $\gamma=1$, if ${\mathcal Re}(\lambda^{*})<1-(2c)^{-1}$, then
\begin{equation*}\label{eq:CLT_Z_j_Z_k_less}
\sqrt{n}\left(Z_{n,j}-Z_{n,k}\right)\ {\longrightarrow}\
\mathcal{N}\left(\ 0\ ,\ Z_{\infty}(1-Z_{\infty})\Sigma_{1,j,k}\ \right),
\ \ \ \ stably
\end{equation*}
where
$\Sigma_{1,j,k}:=
[\widehat{\Sigma}_1]_{j,j}+[\widehat{\Sigma}_1]_{k,k}-2[\widehat{\Sigma}_1]_{j,k}$
and
$\widehat{\Sigma}_1$ is defined in \eqref{def:sigmahat1}.

\item[(c)] For $\gamma=1$, if ${\mathcal Re}(\lambda^{*})=1-(2c)^{-1}$
  and~\eqref{eq:condition_r_n_bis_main} holds, then
\begin{equation*}\label{eq:CLT_Z_j_Z_k_less}
\frac{\sqrt{n}}{\sqrt{\ln(n)}}
\left(Z_{n,j}-Z_{n,k}\right)\ {\longrightarrow}\
\mathcal{N}\left(\ 0\ ,\ Z_{\infty}(1-Z_{\infty})\Sigma_{1,j,k}^*\ \right),
\ \ \ \ stably
\end{equation*}
where
$\Sigma_{1,j,k}^*:=
[\widehat{\Sigma}_1^*]_{j,j}+
[\widehat{\Sigma}_1^*]_{k,k}-2[\widehat{\Sigma}_1^*]_{j,k}$
and $\widehat{\Sigma}_1^*$ is defined in \eqref{def:sigmahat1_star}.
\end{itemize}
\end{theo}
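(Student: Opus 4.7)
The plan is to reduce the statement to a stable CLT for the components of $\mathbf{Z}_n$ lying in the directions of the right eigenvectors $\mathbf{v}_h$ of $W$ associated to eigenvalues $\lambda_h\neq 1$. Since $\mathbf{u}_1=N^{-1/2}\mathbf{1}$ has equal entries and the bases $\{\mathbf{u}_h\}$, $\{\mathbf{v}_h\}$ are biorthogonal, the canonical difference $e_j-e_k$ decomposes as $e_j-e_k=\sum_{h=2}^{N}(u_{h,j}-u_{h,k})\mathbf{v}_h$, which gives the identity
\begin{equation*}
Z_{n,j}-Z_{n,k}\ =\ \sum_{h=2}^{N}(u_{h,j}-u_{h,k})\,\mathbf{v}_h^{\top}\mathbf{Z}_n.
\end{equation*}
Hence the synchronization error is a linear functional of the ``non-trivial'' projections $\mathbf{v}_h^{\top}\mathbf{Z}_n$, $h\geq 2$, and any joint stable CLT for these is automatically transported to $Z_{n,j}-Z_{n,k}$.

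Next I would derive a scalar stochastic-approximation recursion for each projection. Writing $\mathbf{X}_n=W^{\top}\mathbf{Z}_{n-1}+\Delta M_n$ with $\Delta M_n$ an $\mathcal{F}_n$-martingale difference and using $\mathbf{v}_h^{\top}W^{\top}=\lambda_h\mathbf{v}_h^{\top}$, the dynamics~\eqref{eq:dynamic-0}--\eqref{eq:dynamic} yield
\begin{equation*}
\mathbf{v}_h^{\top}\mathbf{Z}_n\ =\ \bigl[1-r_{n-1}(1-\lambda_h)\bigr]\mathbf{v}_h^{\top}\mathbf{Z}_{n-1}\ +\ r_{n-1}\,\mathbf{v}_h^{\top}\Delta M_n.
\end{equation*}
By Theorem~\ref{th:sincro} and the conditional independence of the $X_{n,j}$'s given $\mathcal{F}_{n-1}$, the conditional covariance of $\Delta M_n$ converges a.s.\ to $Z_{\infty}(1-Z_{\infty})\,I$, so $E[\mathbf{v}_h^{\top}\Delta M_n\,\mathbf{v}_{h'}^{\top}\Delta M_n\mid\mathcal{F}_{n-1}]\to Z_{\infty}(1-Z_{\infty})\mathbf{v}_h^{\top}\mathbf{v}_{h'}$ almost surely.

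I would then apply a stable-convergence martingale CLT of the type used in \cite{cri-dai-lou-min} (and reviewed in the appendix) to the joint vector $(\mathbf{v}_h^{\top}\mathbf{Z}_n)_{h=2,\ldots,N}$. The size of each projection is governed by the balance between the contraction rate $r_{n-1}(1-\lambda_h)$ and the noise scale $r_{n-1}$: a standard Lyapunov-type computation on a generic recursion of the form $U_n=(1-a_h r_{n-1})U_{n-1}+r_{n-1}\xi_n$ gives the three regimes of the theorem, with limit covariances having $(h,h')$-entries $\tfrac{c\,\mathbf{v}_h^{\top}\mathbf{v}_{h'}}{2-\lambda_h-\lambda_{h'}}$ when $1/2<\gamma<1$, $\tfrac{c^2\,\mathbf{v}_h^{\top}\mathbf{v}_{h'}}{2c-c(\lambda_h+\lambda_{h'})-1}$ when $\gamma=1$ and ${\mathcal Re}(\lambda^{*})<1-(2c)^{-1}$, and the degenerate $c^2\,\mathbf{v}_h^{\top}\mathbf{v}_{h'}\,\mathbf{1}_{\{\lambda_h+\lambda_{h'}=2-c^{-1}\}}$ in the borderline case. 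Reassembling via the identity above then produces the asymptotic variance $e_{j,k}^{\top}U\widehat{S}\,U^{\top}e_{j,k}=[\widehat{\Sigma}]_{j,j}+[\widehat{\Sigma}]_{k,k}-2[\widehat{\Sigma}]_{j,k}$, proving each case.

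The main obstacle is precisely case (a). For (b) and (c) one could alternatively just project Theorem~\ref{thm:asymptotics_Z} onto $e_{j,k}$, observing that $e_{j,k}^{\top}\widetilde{\Sigma}_1 e_{j,k}=0$ because $\widetilde{\Sigma}_1\propto\mathbf{1}\mathbf{1}^{\top}$; in case (a), however, Theorem~\ref{thm:asymptotics_Z}(a) operates at the coarser scale $n^{\gamma-1/2}$ and direct projection gives only the trivial limit $0$, so one must genuinely run the martingale CLT for the orthogonal components at the finer scale $n^{\gamma/2}$ while tracking the random factor $Z_{\infty}(1-Z_{\infty})$ through stable convergence. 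The borderline sub-case $\gamma=1$, ${\mathcal Re}(\lambda^{*})=1-(2c)^{-1}$ is also delicate: the $\sqrt{\ln n}$ factor forces a sharper control on the remainder of the contraction coefficients, which is exactly where the stronger assumption $nr_n-c=O(n^{-1})$ is needed.
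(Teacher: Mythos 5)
Your proposal follows essentially the same route as the paper: the paper observes that $Z_{n,j}-Z_{n,k}=\widehat{Z}_{n,j}-\widehat{Z}_{n,k}$ and invokes the CLT for $\widehat{\mathbf{Z}}_n$ (Theorem~\ref{thm:asymptotics_Z_hat}), whose proof is exactly the analysis of the scalar recursions for the projections $\mathbf{v}_h^{\top}\mathbf{Z}_n$ together with the stable martingale CLT that you lay out. Your identity $e_j-e_k=\sum_{h\geq 2}(u_{h,j}-u_{h,k})\mathbf{v}_h$ is the same as $Z_{n,j}-Z_{n,k}=(e_j-e_k)^{\top}UV^{\top}\mathbf{Z}_n=\widehat{Z}_{n,j}-\widehat{Z}_{n,k}$, and the resulting variance $(e_j-e_k)^{\top}U\widehat{S}U^{\top}(e_j-e_k)$ matches the paper's $\Sigma_{j,k}$, so the two formulations coincide.
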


\begin{rem}\label{rem:Z_hat}
\rm In the particular case when $W$ is symmetric, the eigenvectors of
$W$ are real, $U=V$ and $V^{\top}V=I$.  As a consequence, the matrices
$\widehat{S}_{\gamma}$, $\widehat{S}_1$ and $\widehat{S}_1^*$ are
diagonal, with elements $c[2(1-\lambda_j)]^{-1}$,
$c[2(1-\lambda_j)-c^{-1}]^{-1}$ and $c^2
\mathbbm{1}_{\{\lambda_j=1-(2c)^{-1}\}}$, respectively, where
$\lambda_j\in Sp(W)\setminus\{1\}$. Moreover, in this case, we have
$\mathbf{u}_1=\mathbf{v}_1=N^{-1/2}\mathbf{1}$ and
$UU^{\top}=UV^{\top}=(I-N^{-1}\mathbf{1}\mathbf{1}^{\top})$ (see Subsection
\ref{subsection_preliminaries} for details). Notice that, for
instance, this is the case of undirected graphs.
\end{rem}

In order to ensure that
  Theorem~\ref{thm:asymptotics_Z} and~\ref{thm:asymptotics_Z_j_Z_k}
  provide the right convergence rates of $(Z_{n,j})_n$ to $Z_{\infty}$ and
  of $(Z_{n,j}-Z_{n,k})_n$ to zero, respectively, we need to have
  $[\widehat{\Sigma}_{1}]_{j,j}\geq 0$, $[\widehat{\Sigma}^*_{1}]_{j,j}>0$,
$\Sigma_{\gamma,j,k}>0$,
  $\Sigma_{1,j,k}>0$, $\Sigma_{1,j,k}^*>0$ and
\begin{equation}\label{Z01}
P(Z_{\infty}=0)\ +\ P(Z_{\infty}=1)\ <\ 1.
\end{equation}

\indent The result below deals with the first set of conditions.

\begin{theo}\label{thm:real_positive_variances}
We have:
\begin{itemize}
\item[(a)] For $1/2<\gamma<1$, $\widehat{\Sigma}_{\gamma}$ is a
  positive semi-definite real matrix of rank $(N-1)$ and
  $\mathbf{v}_1^{\top}\widehat{\Sigma}_{\gamma}\mathbf{v}_1=0$; in addition,
  $\Sigma_{\gamma,j,k}>0$ for any $1\leq j\neq k\leq N$.

\item[(b)] For $\gamma=1$, if ${\mathcal
  Re}(\lambda^{*})<1-(2c)^{-1}$, then $\widehat{\Sigma}_1$ is a
  positive semi-definite real matrix of rank $(N-1)$ and
  $\mathbf{v}_1^{\top}\widehat{\Sigma}_{1}\mathbf{v}_1=0$; in addition,
  $\Sigma_{1,j,k}>0$ for any $1\leq j\neq k\leq N$.

\item[(c)] For $\gamma=1$, if ${\mathcal Re}(\lambda^{*})=1-(2c)^{-1}$, define
\begin{equation}\label{def:A_star}
A^{*}\ :=\
\left\{\ \lambda_j\in Sp(W),\ {\mathcal Re}(\lambda_j)=1-(2c)^{-1}\ \right\}
\end{equation}
and let $m^*$ be the cardinality of $A^{*}$; then
$\widehat{\Sigma}_1^*$ is a positive semi-definite real matrix of
rank $m^*$ and $\mathbf{v}_j^{\top}\widehat{\Sigma}_{1}^*\mathbf{v}_j=0$ for
any $j$ such that $\lambda_j\notin A^{*}$; moreover,
$[\widehat{\Sigma}_1^*]_{jj}>0$ when $u_{h,j}\neq 0$ for some $h$ such
that $\lambda_h\in A^{*}$ and
$\Sigma_{1,j,k}>0$ when $u_{h,j}\neq u_{h,k}$ for some $h$ such
that $\lambda_h\in A^{*}$.
\end{itemize}
\end{theo}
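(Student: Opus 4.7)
The plan is to express each of the three covariance matrices in a form that makes positive semi-definiteness, real-ness and the rank simultaneously transparent. For parts (a) and (b), where the denominators $2-(\lambda_h+\lambda_j)$ and $2c-c(\lambda_h+\lambda_j)-1$ appearing in $\widehat{S}_{\gamma}$ and $\widehat{S}_{1}$ are bounded away from zero in the right half-plane (by Perron-Frobenius in (a), and by the hypothesis ${\mathcal Re}(\lambda^{*})<1-(2c)^{-1}$ in (b)), I would use an integral (resolvent) representation of the scalar coefficients. For (c), where that denominator degenerates precisely on the support of $\widehat{S}_{1}^{*}$, the integral trick fails and I would instead work with spectral projectors of $W$, exploiting the fact that $W$ is real so that complex conjugate eigenvalues pair up.

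Concretely, for (a), $h,j\geq 2$ gives ${\mathcal Re}(\lambda_h)+{\mathcal Re}(\lambda_j)\leq 2{\mathcal Re}(\lambda^{*})<2$, hence
\[
\frac{c}{2-(\lambda_h+\lambda_j)}=c\!\int_{0}^{\infty}\!e^{-2s}e^{(\lambda_h+\lambda_j)s}\,ds,
\]
and analogously for $\widehat{S}_{1}$ in (b) with weight $c^{2}e^{-(2c-1)s}e^{c(\lambda_h+\lambda_j)s}$. Setting $D=\mathrm{diag}(\lambda_{2},\dots,\lambda_{N})$ and $P_{1}=\mathbf{v}_{1}\mathbf{u}_{1}^{\top}$, the identity $Ve^{Ds}U^{\top}=(I-P_{1})e^{Ws}$, a direct consequence of $\mathbf{u}_{h}^{\top}\mathbf{v}_{k}=\delta_{h,k}$, assembles these scalar identities into
\[
\widehat{\Sigma}_{\gamma}=c\!\int_{0}^{\infty}\!e^{-2s}B(s)^{\top}B(s)\,ds,\qquad B(s):=(I-P_{1})e^{Ws},
\]
and analogously $\widehat{\Sigma}_{1}=c^{2}\int_{0}^{\infty}e^{-(2c-1)s}\widetilde{B}(s)^{\top}\widetilde{B}(s)\,ds$ with $\widetilde{B}(s):=(I-P_{1})e^{cWs}$. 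Since $W$ and $P_{1}$ are real (the Perron eigenvectors are real by Frobenius-Perron), these are integrals of real PSD matrices, so $\widehat{\Sigma}_{\gamma},\widehat{\Sigma}_{1}$ are real and PSD. Moreover $\mathbf{x}^{\top}\widehat{\Sigma}_{\gamma}\mathbf{x}=c\int_{0}^{\infty}e^{-2s}\|(I-P_{1})e^{Ws}\mathbf{x}\|^{2}\,ds=0$ forces $(I-P_{1})e^{Ws}\mathbf{x}=0$ for every $s\geq 0$; specializing at $s=0$ identifies $\ker\widehat{\Sigma}_{\gamma}=\mathrm{span}(\mathbf{v}_{1})$, so the rank is $N-1$ and $\mathbf{v}_{1}^{\top}\widehat{\Sigma}_{\gamma}\mathbf{v}_{1}=0$ (and likewise for $\widehat{\Sigma}_{1}$). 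The strict positivity of $\Sigma_{\gamma,j,k}$ and $\Sigma_{1,j,k}$ for $j\neq k$ then reduces to $\mathbf{e}_{j}-\mathbf{e}_{k}\notin\mathrm{span}(\mathbf{v}_{1})$, immediate from the Perron-Frobenius property $[\mathbf{v}_{1}]_{k}>0$, since a multiple of $\mathbf{v}_{1}$ has all entries of the same sign.

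For (c), I would first establish the algebraic identity
\[
\widehat{\Sigma}_{1}^{*}=c^{2}\!\sum_{\mu\in A^{*}}P_{\mu}^{\top}P_{\overline{\mu}},\qquad P_{\mu}:=\!\sum_{h:\lambda_{h}=\mu}\mathbf{v}_{h}\mathbf{u}_{h}^{\top},
\]
obtained by regrouping the double sum defining $\widehat{\Sigma}_{1}^{*}$ by distinct eigenvalues and using that $\lambda_{h}+\lambda_{j}=2-c^{-1}$ iff $\lambda_{j}=\overline{\lambda_{h}}$ with both in $A^{*}$. Since $W$ is real, $A^{*}$ is closed under conjugation and $P_{\overline{\mu}}=\overline{P_{\mu}}$. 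Each real $\mu\in A^{*}$ contributes $c^{2}P_{\mu}^{\top}P_{\mu}$, real and PSD; each complex pair $\{\mu,\overline{\mu}\}\subset A^{*}$ contributes, writing $P_{\mu}=R+iS$ with $R,S$ real, the real PSD block $c^{2}(P_{\mu}^{\top}\overline{P_{\mu}}+\overline{P_{\mu}}^{\top}P_{\mu})=2c^{2}(R^{\top}R+S^{\top}S)$. Hence $\widehat{\Sigma}_{1}^{*}$ is real and PSD. For the rank, a real $\mathbf{x}$ annihilates the quadratic form iff $P_{\mu}\mathbf{x}=0$ for every $\mu\in A^{*}$, equivalently $P^{*}\mathbf{x}=0$ where $P^{*}:=\sum_{\mu\in A^{*}}P_{\mu}$ is the total spectral projector onto the $m^{*}$-dimensional sum of the corresponding eigenspaces; thus $\mathrm{rank}(\widehat{\Sigma}_{1}^{*})=m^{*}$ and $\mathbf{v}_{j}^{\top}\widehat{\Sigma}_{1}^{*}\mathbf{v}_{j}=0$ whenever $\lambda_{j}\notin A^{*}$. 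The entrywise criteria follow from $P_{\mu}\mathbf{e}_{j}=\sum_{h:\lambda_{h}=\mu}u_{h,j}\mathbf{v}_{h}$: by linear independence of $\{\mathbf{v}_{h}\}$, $P^{*}\mathbf{e}_{j}=0$ iff $u_{h,j}=0$ for every $h$ with $\lambda_{h}\in A^{*}$, so $[\widehat{\Sigma}_{1}^{*}]_{j,j}>0$ under the stated condition; applying the same argument to $\mathbf{e}_{j}-\mathbf{e}_{k}$ yields the criterion for $\Sigma_{1,j,k}^{*}$.

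The main obstacle I anticipate is the bookkeeping in case (c): one must recognise that the integral representation from (a)--(b) breaks down at exactly the pairs supporting $\widehat{S}_{1}^{*}$, and that the correct substitute is the spectral-projector identity above. The pairing of complex conjugate eigenvalues, forced by the realness of $W$, is what makes the otherwise complex summands assemble into real PSD blocks of the form $R^{\top}R+S^{\top}S$; once this decomposition is in place, the rank count and the entrywise positivity criteria reduce to elementary linear algebra applied to the Perron-Frobenius eigenvector and to the spectral projectors onto $A^{*}$.
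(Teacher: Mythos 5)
Your proof is correct and follows essentially the same route as the paper's. For (a) and (b) you use the same integral (Laplace/resolvent) representation of the scalar coefficients $c/(l_h+l_j)$, assembling them into an integral of real PSD matrices of the form $(I-P_1)e^{Ws}$ — the paper's $M(u)=Ue^{-uL}V^\top=\exp[-u(I-W^\top)]-\mathbf{u}_1\mathbf{v}_1^\top$ is just $e^{-u}B(u)^\top$, so the two representations $c\int M(u)M(u)^\top\,du$ and $c\int e^{-2s}B(s)^\top B(s)\,ds$ coincide — and for (c) you use the same grouping by distinct eigenvalues and conjugate pairing via spectral projectors $P_\mu$ (the paper's $M_h=U_hV_h^\top$ are exactly $P_\mu^\top$), merely spelling out the $R^\top R+S^\top S$ decomposition where the paper cites Horn--Johnson.
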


Finally, we give two results concerning the distribution of $Z_\infty$,
of which the last one deals with condition \eqref{Z01}.

\begin{theo}\label{thm:Z-no-atomi}
We have $P(Z_{\infty}=z)=0$ for any $z\in (0,1)$.
\end{theo}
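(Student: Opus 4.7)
The plan is to argue by contradiction: suppose $P(Z_{\infty}=z)>0$ for some $z\in(0,1)$, set $A_z:=\{Z_{\infty}=z\}$, and derive a contradiction from the interplay of the martingale structure, the stable CLT of Theorem~\ref{thm:asymptotics_Z}, and L\'evy's 0--1 law.

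First I would introduce the scalar process $M_n:=\mathbf{v}_1^{\top}\mathbf{Z}_n$. Since $W\mathbf{v}_1=\mathbf{v}_1$ (hence $\mathbf{v}_1^{\top}W^{\top}=\mathbf{v}_1^{\top}$), the dynamics~\eqref{eq:dynamic-0}--\eqref{eq:dynamic} give $E[M_{n+1}\mid\mathcal{F}_n]=(1-r_n)M_n+r_n\mathbf{v}_1^{\top}W^{\top}\mathbf{Z}_n=M_n$, so $(M_n)$ is a bounded $(\mathcal{F}_n)$-martingale. Theorem~\ref{th:sincro} together with $\mathbf{1}^{\top}\mathbf{v}_1=N^{1/2}$ from~\eqref{eq:relazioni-1} gives $M_n\to N^{1/2}Z_{\infty}$ a.s., so an atom of $Z_{\infty}$ at $z$ corresponds to an atom of $M_{\infty}$ at $N^{1/2}z$. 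Projecting Theorem~\ref{thm:asymptotics_Z}(a) onto $\mathbf{v}_1$ and using $\mathbf{v}_1^{\top}\widetilde{\Sigma}_{\gamma}\mathbf{v}_1=N\widetilde{\sigma}_{\gamma}^2$ yields
\[
n^{\gamma-\frac{1}{2}}\bigl(M_n-N^{1/2}Z_{\infty}\bigr)\ \stackrel{d}{\longrightarrow}\ \mathcal{N}\bigl(0,\,Z_{\infty}(1-Z_{\infty})\,N\widetilde{\sigma}_{\gamma}^2\bigr)\ \text{ stably},
\]
with analogous strictly-positive-variance statements in cases (b) and (c) via Theorem~\ref{thm:real_positive_variances}. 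Moreover, on $A_z$ the conditional increment variance $E[(\Delta M_n)^2\mid\mathcal{F}_n]=r_n^2\sum_j v_{1,j}^2(W^{\top}\mathbf{Z}_n)_j(1-(W^{\top}\mathbf{Z}_n)_j)$ is equivalent to $r_n^2\|\mathbf{v}_1\|^2 z(1-z)>0$, so the tail quadratic variation $\sum_{k\ge n}E[(\Delta M_k)^2\mid\mathcal{F}_k]$ is of exact order $n^{-(2\gamma-1)}$ on $A_z$.

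The contradiction is driven by L\'evy's 0--1 law: since $A_z\in\mathcal{F}_{\infty}$, $P(A_z\mid\mathcal{F}_n)\to\mathbf{1}_{A_z}$ a.s., so on $A_z$, $P(M_{\infty}=N^{1/2}z\mid\mathcal{F}_n)\to 1$. For every $\varepsilon>0$,
\[
P(M_{\infty}=N^{1/2}z\mid\mathcal{F}_n)\;\le\;P(|M_{\infty}-N^{1/2}z|<\varepsilon\mid\mathcal{F}_n),
\]
so it suffices to show that the right-hand side stays strictly below $1$ for some fixed $\varepsilon$ and all large $n$ on a subset of $A_z$ of positive probability. I would obtain this via a pointwise anti-concentration estimate, $\sup_{c\in\mathbb{R}}P(M_{\infty}=c\mid\mathcal{F}_n)\to 0$ a.s.\ on $A_z$, by applying a conditional local CLT (or equivalently a conditional Berry--Esseen bound) to the martingale tail $M_{\infty}-M_n=\sum_{k\ge n}\Delta M_k$, using the uniform lower bound on the rescaled conditional variances $\mathrm{Var}(\Delta M_k/r_k\mid\mathcal{F}_k)$ just established.

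The hard part is precisely this final step. Stable weak convergence of the rescaled error is insufficient by itself to exclude atoms in the conditional law of $M_{\infty}$ given $\mathcal{F}_n$, so it has to be strengthened to a local CLT statement. Verifying the Lindeberg condition is straightforward since $|\Delta M_k|\le C r_k$ while $E[(\Delta M_k)^2\mid\mathcal{F}_k]\ge c r_k^2$ on $A_z$; the delicate point is ruling out pathological lattice-like concentration of the increments $\Delta M_k$ (which take at most $2^N$ values given $\mathcal{F}_k$) so as to convert the Lindeberg--Feller CLT into a genuine density-level bound on the limit. I expect this to be the main technical obstacle in turning the outline into a complete proof.
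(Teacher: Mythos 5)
Your outline correctly identifies the ingredients of the argument (the bounded martingale, L\'evy's 0--1 law, and the need for a non-degenerate fluctuation limit on the hypothetical atom), but it misses the key tool the paper has already prepared, and as a result you drive yourself into a technically hard and, as you correctly diagnose, genuinely problematic step.

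The paper does \emph{not} use a local CLT or a conditional Berry--Esseen bound, and does not project Theorem~\ref{thm:asymptotics_Z} onto $\mathbf{v}_1$. It proves Theorem~\ref{thm:Z-no-atomi} directly from the last sentence of Theorem~\ref{thm:asymptotics_Z_tilde}: the convergence
$n^{\gamma-\frac12}(\widetilde{Z}_n - Z_\infty)\to\mathcal{N}(0,\widetilde{\sigma}_\gamma^2 Z_\infty(1-Z_\infty))$
holds \emph{in the sense of almost sure conditional convergence} with respect to $(\mathcal{F}_n)_n$ (this is what Theorem~\ref{fam_tri_vet_as_inf} is invoked for). Concretely, for a.e.\ $\omega$ the \emph{conditional laws} $K_n(\omega)$ of $n^{\gamma-\frac12}(\widetilde{Z}_n - Z_\infty)$ given $\mathcal{F}_n$ converge weakly to $\mathcal{N}\bigl(0,\widetilde{\sigma}_\gamma^2 Z_\infty(\omega)(1-Z_\infty(\omega))\bigr)$. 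This is strictly stronger than stable convergence, and it is exactly the strengthening you need: since $\widetilde{Z}_n$ is $\mathcal{F}_n$-measurable, the mass $P(Z_\infty = z\mid\mathcal{F}_n)(\omega)$ is the mass that $K_n(\omega)$ assigns to the single point $n^{\gamma-1/2}(\widetilde{Z}_n(\omega)-z)$. By L\'evy's 0--1 law this mass tends to $1$ on $A_z=\{Z_\infty=z\}$. But a sequence of probability measures on $\mathbb{R}$ that puts mass $\to 1$ on a single (moving) point and converges weakly can only converge to a Dirac mass, whereas on $A_z$ (which has positive probability by hypothesis and where $z(1-z)>0$) the weak limit is a non-degenerate Gaussian. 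Contradiction. No anti-concentration estimate, Lindeberg verification, or control of the lattice structure of the increments is required.

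So the gap in your proposal is not in the main idea but in the choice of tool: you try to upgrade the stable CLT to a local CLT, which is both unnecessary and genuinely obstructed by the discreteness you yourself point out, instead of using the almost sure conditional form of the CLT that Theorem~\ref{thm:asymptotics_Z_tilde} explicitly provides. Replace the last step of your plan by an appeal to that statement and the argument becomes elementary; as written, the plan is incomplete because it leaves the hardest (and in fact avoidable) step unresolved.
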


\begin{theo}\label{thm:distribution_Z}
If we have
\begin{equation}\label{Z01_Hp}
P\left(\ \bigcap_{k=1}^N\{Z_{0,k}=0\}\ \right)\
+\ P\left(\ \bigcap_{k=1}^N\{Z_{0,k}=1\}\ \right)\ <\ 1,
\end{equation}
then condition \eqref{Z01} is verified.
\end{theo}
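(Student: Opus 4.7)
I argue by contrapositive. Suppose $P(Z_{\infty}\in\{0,1\})=1$; the goal is to show that $\mathbf{Z}_0\in\{\mathbf{0},\mathbf{1}\}$ almost surely, contradicting~\eqref{Z01_Hp}. The central tool is the scalar $\mathcal{F}_n$-martingale
\[
\widetilde{M}_n:=N^{-1/2}\mathbf{v}_1^{\top}\mathbf{Z}_n,
\]
which takes values in $[0,1]$ (the martingale property comes from $W\mathbf{v}_1=\mathbf{v}_1$) and, by Theorem~\ref{th:sincro} together with $N^{-1/2}\mathbf{1}^{\top}\mathbf{v}_1=1$ from~\eqref{eq:relazioni-1}, converges a.s.\ and in $L^p$ to $Z_{\infty}$. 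Because every entry of $\mathbf{v}_1$ is strictly positive, one has $\widetilde{M}_0=0\iff\mathbf{Z}_0=\mathbf{0}$ and $\widetilde{M}_0=1\iff\mathbf{Z}_0=\mathbf{1}$, so the goal reduces to $E[\widetilde{M}_0(1-\widetilde{M}_0)]=0$; on the other hand Theorem~\ref{thm:Z-no-atomi} together with the contrapositive hypothesis gives $E[Z_{\infty}(1-Z_{\infty})]=0$. It is therefore enough to establish
\[
E[Z_{\infty}(1-Z_{\infty})]\ \geq\ c\,E[\widetilde{M}_0(1-\widetilde{M}_0)]
\]
for some positive constant $c=c(W,(r_n))$.

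To prove such a multiplicative lower bound I will work with the nonnegative supermartingale $\widetilde{Y}_n:=\widetilde{M}_n(1-\widetilde{M}_n)$. Conditional independence of $\{X_{n+1,j}\}_{j\in V}$ and $\mathbf{v}_1^{\top}W^{\top}=\mathbf{v}_1^{\top}$ imply
\[
E[\widetilde{Y}_{n+1}\mid\mathcal{F}_n]\ =\ \widetilde{Y}_n-\frac{r_n^2}{N}\sum_{j=1}^{N}v_{1,j}^{\,2}\,\beta_{n,j}(1-\beta_{n,j}),\qquad \beta_{n,j}:=(W^{\top}\mathbf{Z}_n)_j.
\]
Decomposing $\mathbf{Z}_n=\widetilde{M}_n\mathbf{1}+\mathbf{d}_n$ (with $\mathbf{v}_1^{\top}\mathbf{d}_n=0$) and writing $\beta_{n,j}=\widetilde{M}_n+(W^{\top}\mathbf{d}_n)_j$ yields the exact identity
\[
\sum_{j}v_{1,j}^{\,2}\,\beta_{n,j}(1-\beta_{n,j})\ =\ \|\mathbf{v}_1\|^{2}\,\widetilde{Y}_n\ +\ (1-2\widetilde{M}_n)\sum_{j}v_{1,j}^{\,2}(W^{\top}\mathbf{d}_n)_j\ -\ \sum_{j}v_{1,j}^{\,2}(W^{\top}\mathbf{d}_n)_j^{2}.
\]
The plan is to extract from this a drift inequality $E[\widetilde{Y}_{n+1}\mid\mathcal{F}_n]\geq\bigl(1-\tfrac{\|\mathbf{v}_1\|^2}{N}r_n^2\bigr)\widetilde{Y}_n-\xi_n$ with $\sum_n E[\xi_n]<\infty$; since $\sum_n r_n^2<\infty$ by Assumption~\ref{ass:r_n} (as $\gamma>1/2$), the infinite product $\prod_n\bigl(1-\tfrac{\|\mathbf{v}_1\|^2}{N}r_n^2\bigr)$ is strictly positive, and iterating then taking expectations yields the desired lower bound.

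The principal difficulty is controlling the sign-indeterminate cross-term $(1-2\widetilde{M}_n)\sum_{j}v_{1,j}^{\,2}(W^{\top}\mathbf{d}_n)_j$. Cauchy--Schwarz bounds its modulus by $\|\mathbf{v}_1\|\,\|W^{\top}\mathbf{d}_n\|$, and the $L^2$ synchronization rate $E[\|\mathbf{d}_n\|^{2}]=O(n^{-\gamma})$---reflecting the $n^{\gamma/2}$ scaling of Theorem~\ref{thm:asymptotics_Z_j_Z_k}, with uniform integrability supplied by $\mathbf{Z}_n\in[0,1]^N$---then makes $r_n^2\,E[|\text{cross-term}|]=O(n^{-5\gamma/2})$ summable (here $\gamma>1/2$ is again used). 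This is where the bulk of the technical work is concentrated.
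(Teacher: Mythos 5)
Your reduction — show $E[Z_{\infty}(1-Z_{\infty})]\geq c\,E[\widetilde{M}_0(1-\widetilde{M}_0)]$ for some $c>0$, then invoke the strict positivity of the entries of $\mathbf{v}_1$ to translate $\widetilde{M}_0\in\{0,1\}$ a.s.\ into $\mathbf{Z}_0\in\{\mathbf{0},\mathbf{1}\}$ a.s. — is correct and a pleasant repackaging. However, the plan for establishing the multiplicative lower bound has a genuine gap. A drift inequality of the form
\[
E[\widetilde{Y}_{n+1}\mid\mathcal{F}_n]\ \geq\ \Bigl(1-\tfrac{\|\mathbf{v}_1\|^2}{N}r_n^2\Bigr)\widetilde{Y}_n\ -\ \xi_n,\qquad \sum_n E[\xi_n]<\infty,
\]
does \emph{not} imply $\lim_n E[\widetilde{Y}_n]>0$. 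Iterating and passing to the limit yields only
\[
E[Z_{\infty}(1-Z_{\infty})]\ \geq\ P_{\infty}\,E[\widetilde{Y}_0]\ -\ \sum_{k} E[\xi_k]\,P_{\infty,k},
\]
with $P_{\infty}=\prod_m\bigl(1-\tfrac{\|\mathbf{v}_1\|^2}{N}r_m^2\bigr)>0$ and $P_{\infty,k}\leq 1$. The summable additive error can dominate the initial mass $E[\widetilde{Y}_0]$, so the right-hand side may be negative and the bound vacuous. Under the contrapositive hypothesis $E[Z_{\infty}(1-Z_{\infty})]=0$, all you could conclude is $P_{\infty}E[\widetilde{Y}_0]\leq\sum_k E[\xi_k]$, which does not force $E[\widetilde{Y}_0]=0$. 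You need a \emph{purely multiplicative} drift bound with no additive error, and your Cauchy--Schwarz bound on the cross-term does not deliver that (it controls $\xi_n$ by $r_n^2\,E[\|W^{\top}\mathbf{d}_n\|]$, not by $r_n^2\,E[\widetilde{Y}_n]$).

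The paper obtains exactly such a multiplicative bound, and avoids the cross-term entirely. Writing $Y_{n,k}=(W^{\top}\mathbf{Z}_n)_k$, it uses the two pointwise inequalities $N^{-1/2}v_{1,k}Y_{n,k}\leq\widetilde{Z}_n$ and $N^{-1/2}v_{1,k}(1-Y_{n,k})\leq 1-\widetilde{Z}_n$, which follow from $N^{-1/2}\mathbf{v}_1^{\top}Y_n=\widetilde{Z}_n$, positivity of $v_{1,j}$, and $Y_{n,j}\in[0,1]$. Multiplying them gives $v_{1,k}^2 Y_{n,k}(1-Y_{n,k})\leq N\widetilde{Z}_n(1-\widetilde{Z}_n)$ for each $k$, whence
\[
x_{n+1}\ =\ x_n-\frac{r_n^2}{N}\,E\Bigl[\sum_k v_{1,k}^2 Y_{n,k}(1-Y_{n,k})\Bigr]\ \geq\ (1-Nr_n^2)\,x_n,
\]
with no additive remainder. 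Since $\sum_n r_n^2<\infty$, the infinite product $\prod_m(1-Nr_m^2)$ is positive and the conclusion follows. As a secondary point, even your claimed $E[\|\mathbf{d}_n\|^2]=O(n^{-\gamma})$ is not justified by ``uniform integrability from $\mathbf{Z}_n\in[0,1]^N$'': boundedness of $\mathbf{Z}_n$ makes $\|\mathbf{d}_n\|^2$ uniformly integrable, but not $n^{\gamma}\|\mathbf{d}_n\|^2$, which is the quantity the CLT scaling concerns. That issue is, however, moot given the drift-inequality gap above. If you want to salvage your route, you would need to bound the cross-term pointwise by a constant times $\widetilde{Y}_n$ (which is plausible, since positivity of $\mathbf{v}_1$ gives $\|\mathbf{d}_n\|\leq C\min(\widetilde{M}_n,1-\widetilde{M}_n)$ and $|1-2\widetilde{M}_n|=\max(\widetilde{M}_n,1-\widetilde{M}_n)-\min(\widetilde{M}_n,1-\widetilde{M}_n)$), but that is a different argument than the one you wrote, and still more involved than the paper's direct inequality.
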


\begin{rem}
\rm In case (a), i.e. $1/2<\gamma<1$, since $\gamma/2>\gamma-1/2$ we
have that the rate at which two stochastic processes
$(Z_{n,j})_n,\,(Z_{n,k})_n$ positioned in any pair of different
vertices $(j,k)$ of the network synchronize is greater than the rate
at which they converge to $Z_{\infty}$, i.e. synchronization of the
stochastic processes at the vertices is faster then their convergence
to the limit random variable.
\end{rem}

%%%%%%%%%%%%%%%%%%%%%%%%%%%%%%%%%%%%%%%%%%%%%%%%%%%%%%%%%%%%%%%%%%%%%%%%%%%
\section{Proofs}\label{section_proofs}
%%%%%%%%%%%%%%%%%%%%%%%%%%%%%%%%%%%%%%%%%%%%%%%%%%%%%%%%%%%%%%%%%%%%%%%%%%%

This section contains all the proofs of the results presented in the
previous Section~\ref{section_asymptotic_results}.

%%%%%%%%%%%%%%%%%%%%%%%%%%%%%%%%%%%%%%%%%%%%%%%%%%%%%%%%%%%%%%%%%%%%%%%%%%%%%%
\subsection{Preliminary relations and basic idea}
\label{subsection_preliminaries}
%%%%%%%%%%%%%%%%%%%%%%%%%%%%%%%%%%%%%%%%%%%%%%%%%%%%%%%%%%%%%%%%%%%%%%%%%%%%%%

We start by recalling that, given the eigen-structure of $W$ described
in Section~\ref{section_model}, the matrix ${\mathbf
  u}_1{\mathbf v}_1^{\top}$ has real entries and
the following relations hold:
\begin{equation}\label{eq:relazioni-2}
V^{\top}\,\mathbf{u}_1=U^{\top}\,\mathbf{v}_1=\mathbf{0},\quad
V^{\top}\,U=U^{\top}\,V=I\quad\mbox{and}\quad
I={\mathbf u}_1{\mathbf v}_1^{\top} + UV^{\top},
\end{equation}
which implies that the matrix $UV^{\top}$ has real entries (Notice
that in~\eqref{eq:relazioni-2} the identity matrices have different
dimensions). Moreover, denoting by $D$ the diagonal matrix whose
elements are $\lambda_j\in Sp(W)\setminus\{1\}$, we can decompose the
matrix $W^{\top}$ as follows:
\begin{equation}\label{eq:decomposition-matrix}
W^{\top}\ =\ {\mathbf u}_1{\mathbf v}_1^{\top}\ +\ UDV^{\top}.
\end{equation}

\indent Now, in order to understand the asymptotic behavior of the
stochastic process $(\mathbf{Z}_{n})_n$, let us express the
dynamics~\eqref{eq:dynamic} as follows:
\begin{equation}\label{eq:dynamic_SA}
\mathbf{Z}_{n+1}-\mathbf{Z}_{n}\ =\ -r_n\left(I-W^{\top}\right)\mathbf{Z}_{n}\
+\ r_n\Delta\mathbf{M}_{n+1}.
\end{equation}
where $\Delta\mathbf{M}_{n+1}=(\mathbf{X}_{n+1}-W^{\top}\mathbf{Z}_{n})$ is a
martingale increment with respect to $({\mathcal F}_{n})_n$. It follows:
\begin{itemize}
\item[(a)] since ${\mathbf v}_1^{\top}W^{\top}=(W{\mathbf
    v}_1)^{\top}={\mathbf v}_1^{\top}$, we have
${\mathbf v}_1^{\top} (I-W^{\top})=\mathbf{0}$
  and so, from \eqref{eq:dynamic_SA}, we deduce that the stochastic
  process $({\mathbf v}_1^{\top} \mathbf{Z}_{n})_n$ is a bounded real
  martingale;
\item[(b)] by \eqref{eq:relazioni-2}, we have
  $\mathbf{Z}_{n}-{\mathbf u}_1 ({\mathbf v}_1^{\top}
  \mathbf{Z}_{n})=UV^{\top}\mathbf{Z}_n$ and so the dynamics of this
  multi-dimensional real stochastic process can be easily obtained from
  \eqref{eq:dynamic_SA}.
\end{itemize}
Hence, the basic idea is to decompose $\mathbf{Z}_{n}$ into two terms,
establish the corresponding asymptotic results for each term
separately and then combine them together to characterize the
asymptotic behavior of $\mathbf{Z}_{n}$.  More precisely, the process
$\mathbf{Z}_{n}$ can be decomposed as follows:
\begin{equation}\label{eq:decomposition_Z}
\mathbf{Z}_{n}=\widetilde{Z}_n\mathbf{1} + \widehat{\mathbf{Z}}_{n}
=\mathbf{u}_1\sqrt{N}\widetilde{Z}_n + \widehat{\mathbf{Z}}_{n},
\;\mbox{where}\;
\left\{
\begin{aligned}
&\widetilde{Z}_{n}=N^{-1/2}\,\mathbf{v}_1^{\top}\,\mathbf{Z}_{n}, \\
&\widehat{\mathbf{Z}}_{n}={\mathbf Z}_n-{\mathbf 1}{\widetilde Z}_n
=(I-\mathbf{u}_1\mathbf{v}_1^{\top})\mathbf{Z}_n
=U\,V^{\top}\,\mathbf{Z}_{n}.
\end{aligned}
\right.
\end{equation}
Then, the asymptotic behavior of the stochastic process
$(\mathbf{Z}_{n})_n$ is obtained by establishing the asymptotic
behavior of $(\widetilde{Z}_{n})_n$ and
$(\widehat{\mathbf{Z}}_{n})_n$.

\begin{rem}\label{rem:Z_tilde_doubly_stoch}
\rm In the particular case of $W$ doubly stochastic, we have
$\mathbf{v}_1=\mathbf{u}_1=N^{-1/2}\mathbf{1}$.
As a consequence, we have
\begin{equation}\label{eq:doubly_stoch_tilde_medione}
\widetilde{Z}_{n}=N^{-1}\mathbf{1}^{\top}\mathbf{Z}_{n}
=N^{-1}\sum_{j=1}^N Z_{n,j},
\end{equation}
which represents the average of the stochastic processes in the
network, and $\widehat{\mathbf{Z}}_{n}=
\left(I-N^{-1}\mathbf{1}\mathbf{1}^{\top}\right)\mathbf{Z}_n$.  Notice that
the assumed normalization $W^{\top}\mathbf{1}=\mathbf{1}$ implies that
symmetric matrices $W$ are also doubly stochastic. Therefore, the
above equalities hold for any undirected graph for which $W$ is
obviously symmetric by definition.
\end{rem}

%%%%%%%%%%%%%%%%%%%%%%%%%%%%%%%%%%%%%%%%%%%%%%%%%%%%%%%%%%%%%%%%%%%
\subsection{Proof of Theorem \ref{th:sincro} (Synchronization)}
\label{subsection-synchro}
%%%%%%%%%%%%%%%%%%%%%%%%%%%%%%%%%%%%%%%%%%%%%%%%%%%%%%%%%%%%%%%%%%%

By decomposition~\eqref{eq:decomposition_Z}, i.e.
$$\mathbf{Z}_{n} =
\widetilde{Z}_n\mathbf{1}+\widehat{\mathbf{Z}}_{n},$$ the proof of
Theorem \ref{th:sincro} follows by establishing the following two
results:
\begin{itemize}
\item[(i)] ${\widetilde Z}_n\stackrel{a.s.}\longrightarrow Z_{\infty}$,
\item[(ii)] $\widehat{\mathbf{Z}}_n\stackrel{a.s.}\longrightarrow 0$.
\end{itemize}

\indent Concerning part (i), let us consider the real-valued
stochastic process $(\widetilde{Z}_{n})_n$ defined for any $n\geq 0$
as ${\widetilde Z}_n= N^{-1/2}{\mathbf v}_1^{\top}{\mathbf Z}_n$.  Since all
the elements of ${\mathbf v}_1$ are positive and since
\eqref{eq:relazioni-1} holds, the elements of $N^{-1/2}{\mathbf v}_1$
can be seen as the weights of a convex combination and hence
$\min_j\{Z_{n,j}\}\leq{\widetilde Z}_n\leq\max_j\{Z_{n,j}\}$ for any
$n$, which implies $0\leq{\widetilde Z}_n\leq 1$.  Moreover, it is
easy to see that $(\widetilde{Z}_{n})_n$ is an $\mathcal F$-martingale, since
from~\eqref{eq:dynamic_SA} its dynamics can be expressed as follows:
\begin{equation}\label{eq:dynamic_SA_tilde}
\widetilde{Z}_{n+1}-\widetilde{Z}_{n}\ =\
N^{-1/2} r_n\left(\mathbf{v}_1^{\top}\Delta\mathbf{M}_{n+1}\right).
\end{equation}
Hence, we immediately get
\begin{equation}\label{eq:conv_Z_tilde}
{\widetilde Z}_n\stackrel{a.s.}\longrightarrow Z_{\infty},
\end{equation}
where $Z_\infty$ is a random variable with values in $[0,1]$.
This concludes the proof of part (i).\\

\indent Concerning part (ii), let us consider the multi-dimensional
stochastic process $(\widehat{\mathbf{Z}}_{n})_n$, with real entries,
defined in~\eqref{eq:decomposition_Z}.  In order to find the dynamics
of this process, we firstly observe that, by decomposition
\eqref{eq:decomposition_Z} and the fact that $W^{\top}{\mathbf u}_1=
({\mathbf u}_1^{\top}W)^{\top}={\mathbf u}_1$, we have
  $$
\left(I-W^{\top}\right)\mathbf{Z}_{n}=
\left(I-W^{\top}\right)
\left(\mathbf{u}_1\sqrt{N}\widetilde{Z}_n+\widehat{\mathbf{Z}}_{n}\right)
=
\left(I-W^{\top}\right)\widehat{\mathbf{Z}}_{n}
$$
and so the dynamics~\eqref{eq:dynamic_SA} of $\mathbf{Z}_{n}$ can be
  rewritten as
\begin{equation}\label{eq:dynamic_SA-2}
\mathbf{Z}_{n+1}-\mathbf{Z}_{n}\ =\
-r_n\left(I-W^{\top}\right)\widehat{\mathbf{Z}}_{n}\ +\
r_n\Delta\mathbf{M}_{n+1}.
\end{equation}
Then, if we multiply the dynamics~\eqref{eq:dynamic_SA-2} by $UV^{\top}$ and
use decomposition \eqref{eq:decomposition-matrix} and the
relations \eqref{eq:relazioni-2}, we obtain
\begin{equation}\label{eq:dynamic_SA_hat}
\begin{split}
\widehat{\mathbf{Z}}_{n+1}-\widehat{\mathbf{Z}}_{n}
&=
-r_n
\left[UV^{\top}-UV^{\top}(\mathbf{u}_1\mathbf{v}_1^{\top}+ UDV^{\top})\right]
\widehat{\mathbf{Z}}_{n}
+r_nUV^{\top}\,\Delta \mathbf{M}_{n+1}\\
&=
-r_n
(UV^{\top}-UDV^{\top})\widehat{\mathbf{Z}}_{n}
+r_nUV^{\top}\,\Delta \mathbf{M}_{n+1}\\
&=
-r_nU(I-D)V^{\top}\widehat{\mathbf{Z}}_{n}+
r_n UV^{\top}\,\Delta\mathbf{M}_{n+1},
\end{split}
\end{equation}
where $I$ in~\eqref{eq:dynamic_SA_hat} is a
$(N-1)\times(N-1)$-identity matrix.
We are now ready for proving that
this multi-dimensional stochastic process converges a.s. to
$\mathbf{0}$.

\begin{theo}\label{th:as_conv_Z_hat}
We have
\begin{equation}\label{eq:as_convergence_Z_hat}
\widehat{\mathbf{Z}}_{n}\ \stackrel{a.s.}{\longrightarrow}\ \mathbf{0}.
\end{equation}
\end{theo}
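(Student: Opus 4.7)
The plan is to reduce the matrix recursion~\eqref{eq:dynamic_SA_hat} to a decoupled family of scalar recursions by left-multiplying it by $V^{\top}$. Setting $\mathbf{Y}_n:=V^{\top}\mathbf{Z}_n=V^{\top}\widehat{\mathbf{Z}}_n$ and using $V^{\top}U=I$ from~\eqref{eq:relazioni-2}, the recursion for $(\widehat{\mathbf{Z}}_n)_n$ becomes
\[
\mathbf{Y}_{n+1}\;=\;\bigl(I-r_n(I-D)\bigr)\mathbf{Y}_n\;+\;r_n\,V^{\top}\Delta\mathbf{M}_{n+1},
\]
where $D$ is the diagonal matrix with entries $\lambda_2,\dots,\lambda_N$. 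Since $\widehat{\mathbf{Z}}_n=UV^{\top}\mathbf{Z}_n=U\mathbf{Y}_n$, it is enough to show $\mathbf{Y}_n\to\mathbf{0}$ almost surely.

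Three structural ingredients then come into play. First, Assumption~\ref{ass:r_n} with $\gamma>1/2$ yields $\sum_n r_n=+\infty$ and $\sum_n r_n^2<+\infty$. Second, Assumption~\ref{ass:W_irreducible} combined with Perron--Frobenius (already invoked to derive~\eqref{eq:relazioni-1}) gives ${\mathcal Re}(\lambda_j)<1$ for every $j\ge 2$, so that $\delta:=\min_{j\ge 2}(1-{\mathcal Re}(\lambda_j))>0$. Third, the increments $\Delta\mathbf{M}_{n+1}=\mathbf{X}_{n+1}-W^{\top}\mathbf{Z}_n$ are uniformly bounded in $n$ since $\mathbf{X}_{n+1}\in\{0,1\}^N$ and $\mathbf{Z}_n\in[0,1]^N$, so there exists a deterministic constant $K$ with $E[\|V^{\top}\Delta\mathbf{M}_{n+1}\|^2\mid\mathcal{F}_n]\le K$ almost surely.

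I would then take $L_n:=\|\mathbf{Y}_n\|^2=\overline{\mathbf{Y}_n}^{\top}\mathbf{Y}_n$ as a Lyapunov function. Because $I-r_n(I-D)$ is diagonal, its $j$-th diagonal entry has squared modulus $1-2r_n(1-{\mathcal Re}(\lambda_j))+r_n^2|1-\lambda_j|^2$, and using $E[\Delta\mathbf{M}_{n+1}\mid\mathcal{F}_n]=\mathbf{0}$ one obtains
\[
E[L_{n+1}\mid\mathcal{F}_n]\;\le\;\bigl(1-2r_n\delta+r_n^2\rho\bigr)L_n\;+\;r_n^2 K,
\]
with $\rho:=\max_{j\ge 2}|1-\lambda_j|^2$. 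For $n$ large enough, $1-2r_n\delta+r_n^2\rho\le 1-r_n\delta$, so $(L_n)$ is a non-negative almost-supermartingale in the sense of Robbins--Siegmund with summable perturbation $r_n^2 K$. The Robbins--Siegmund lemma then delivers $L_n\to L_\infty$ almost surely together with $\sum_n r_n L_n<+\infty$ almost surely, and $\sum_n r_n=+\infty$ forces $L_\infty=0$. Hence $\mathbf{Y}_n\to\mathbf{0}$ a.s., and multiplying by $U$ yields~\eqref{eq:as_convergence_Z_hat}.

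The main technical point to watch is that $D$ is in general complex-valued and $V$ is not unitary, so the change of coordinates $\mathbf{Y}_n=V^{\top}\mathbf{Z}_n$ is not an isometry and one cannot simply track $\|\widehat{\mathbf{Z}}_n\|^2$ directly. Working instead with the Hermitian norm of $\mathbf{Y}_n$ in the eigenbasis of $W^{\top}$ is precisely what turns the drift $I-r_n(I-D)$ into a genuine contraction via the strictly positive lower bound $\delta$ on ${\mathcal Re}(1-\lambda_j)$; everything else is standard stochastic-approximation bookkeeping.
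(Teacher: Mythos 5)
Your proof is correct and follows essentially the same strategy as the paper: the same change of coordinates $\mathbf{Y}_n=V^{\top}\mathbf{Z}_n$ (the paper calls it $\mathbf{Z}_{V,n}$), the same scalar Lyapunov function $L_n=\|\mathbf{Y}_n\|^2$, the same computation of $E[L_{n+1}\mid\mathcal{F}_n]$ using $E[\Delta\mathbf{M}_{n+1}\mid\mathcal{F}_n]=\mathbf{0}$ and the positive definiteness of $2I-(\overline{D}+D)$, and the same invocation of Robbins--Siegmund. The one place you diverge is in identifying the limit as zero: you keep the negative drift term $-r_n\delta L_n$ so that the Robbins--Siegmund lemma delivers both $L_n\to L_\infty$ a.s.\ and $\sum_n r_n L_n<\infty$ a.s., and then the divergence $\sum_n r_n=\infty$ forces $L_\infty=0$. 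The paper instead discards the drift (obtaining only a.s.\ convergence of $L_n$) and then separately derives the deterministic recursion $x_{n+1}\le(1-2a^*r_n)x_n+O(r_n^2)$ for $x_n=E[L_n]$, concluding $x_n\to 0$ and hence $L_\infty=0$. Both routes are valid; yours is marginally more economical since it extracts the needed information in a single application of the lemma, while the paper's version makes the $L^1$ decay rate explicit, which is consistent with the quantitative estimates it reuses later.
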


\begin{proof} Let us consider the $(N-1)$-dimensional complex
  random vector defined as
  $\mathbf{Z}_{V,n}=V^{\top}\widehat{\mathbf{Z}}_{n}$. Since we have
  $\widehat{\mathbf{Z}}_{n}=U\mathbf{Z}_{V,n}$ by
  \eqref{eq:relazioni-2}, it is enough to prove that
  $\mathbf{Z}_{V,n}$ converges almost surely to $\mathbf{0}$. To this
  purpose, we observe that the dynamics of $\mathbf{Z}_{V,n}$ can be
  obtained from \eqref{eq:dynamic_SA_hat} multiplying by $V^{\top}$:
$$\mathbf{Z}_{V,n+1}\ =\ (I-r_n(I-D))\mathbf{Z}_{V,n}\
+\ r_nV^{\top}\Delta \mathbf{M}_{n+1},
$$
where $I$ here indicates a $(N-1)\times(N-1)$-identity matrix.
Hence, recalling that $E[\Delta\mathbf{M}_{n+1}\,|\,{\mathcal
    F}_{n}]=0$, we obtain
\begin{equation*}
\begin{split}
E\left[\|\mathbf{Z}_{V,n+1}\|^2|\mathcal{F}_n\right]
&=
E\left[
\overline{\mathbf{Z}}_{V,n+1}^{\top}\, \mathbf{Z}_{V,n+1}\,|\, {\mathcal F}_{n}
\right]
\\
&=
\overline{\mathbf{Z}}_{V,n}^{\top}(I-r_n(I-\overline{D}))
(I-r_n(I-D))\mathbf{Z}_{V,n}\
+\ r_n^2
E\left[\Delta\mathbf{M}_{n+1}^{\top}\overline{V}V^{\top}\Delta \mathbf{M}_{n+1}
\,|\,\mathcal{F}_n\right]
\\
&=
\overline{\mathbf{Z}}_{V,n}^{\top}\, \mathbf{Z}_{V,n}
-r_n\overline{\mathbf{Z}}_{V,n}^{\top}
\left(2I-\overline{D}-D\right)\mathbf{Z}_{V,n}
+r_n^2\xi_{n},
\end{split}
\end{equation*}
where $(\xi_{n})_n$ is a suitable bounded sequence of ${\mathcal
  F}_{n}$-measurable random variables. Since
$\mathcal{R}e(\lambda_j)<1$ for any $\lambda_j\in
Sp(W)\setminus\{1\}$, the matrix $2I-(\overline{D}+D)$ is positive
definite and hence we can write
\begin{equation*}
E\left[
\|{\mathbf{Z}}_{V,n+1}\|^2\,|\, {\mathcal F}_{n}
\right]
\leq
\|{\mathbf{Z}}_{V,n}\|^2\ +\  O(r_n^2).
\end{equation*}
Since $\sum_n r_n^2<+\infty$ for $1/2<\gamma\leq 1$, we can conclude
that the real stochastic process $(\|{\mathbf{Z}}_{V,n}\|^2)_n$ is a
positive almost supermartingale and so (see \cite{rob-sie}) it
converges almost surely (and in mean since it is also bounded). In
order to prove that the limit is zero, it is enough to prove that
$E[\|\mathbf{Z}_{V,n}\|^2]$ converges to zero. To this end, we observe
that, from the above computations, we obtain
\begin{equation*}
\begin{split}
E[\|\mathbf{Z}_{V,n+1}\|^2]\ &=\
E[\overline{\mathbf{Z}}_{V,n}^{\top}(I-r_n(I-\overline{D}))(I-r_n(I-D))
\mathbf{Z}_{V,n}]\
+\ r_n^2E[\Delta\mathbf{M}_{n+1}^{\top}\overline{V}V^{\top}\Delta \mathbf{M}_{n+1}]
\\
&\leq
E[\overline{\mathbf{Z}}_{V,n}^{\top}(I-r_n(I-\overline{D}))(I-r_n(I-D))
\mathbf{Z}_{V,n}]\
+\ C_1 r_n^2
\end{split}
\end{equation*}
for a suitable constant $C_1\geq 0$. Then, we note that the elements
of the diagonal matrix above can be written as follows
$$[(I-r_n(I-\overline{D}))(I-r_n(I-D))]_{jj}\ =\ 1-2r_n(1-{\mathcal
  R}e(\lambda_j))+r^2_n\, |1-\lambda_j|^2.$$ Hence, setting
$a_j=1-{\mathcal R}e(\lambda_j)$ and $a^*=\min_j\{a_j\}=1-{\mathcal
  R}e(\lambda^*)$ (we recall that $\lambda^{*}$ indicates an
eigenvalue belonging to $\lambda_{max}(D)$), we have that
\[\begin{aligned}
E[\overline{\mathbf{Z}}_{V,n}^{\top}
(I-r_n(I-\overline{D}))(I-r_n(I-D))\mathbf{Z}_{V,n}]\ &&\leq&\
\sum_{j=2}^{N}(1-2a_j r_n)E[\overline{Z}^j_{V,n}Z^j_{V,n}]\ +C_2 r^2_n\\
&&\leq
&\ (1-2a^* r_n)E[\|\mathbf{Z}_{V,n}\|^2]\ +C_2 r^2_n
\end{aligned}\]
for a suitable constant $C_2\geq 0$. Then, setting
$x_n:=E[\|\mathbf{Z}_{V,n}\|^2]$, we can write
$$x_{n+1}\ \leq\ (1-2a^*r_n)x_n\ +\ (C_1+C_2)r_n^2.$$ Since ${\mathcal
  R}e(\lambda^*)<1$, we have $a^*>0$, which implies $\lim_n x_n=0$
(see \cite{cri-dai-lou-min}). The proof is thus concluded.
\end{proof}

\indent Note that, by the synchronization result given in Theorem
\ref{th:sincro}, we can state that
\begin{equation}\label{eq:multidim-limite-conditional-M}
E[(\Delta{\mathbf M}_{n+1})(\Delta{\mathbf M}_{n+1})^{\top}
\,|\,{\mathcal F}_n]
\stackrel{a.s.}\longrightarrow Z_{\infty}(1-Z_{\infty})I.
\end{equation}
Indeed, since $\{X_{n+1,k}:\, j=1,\dots, N\}$ are conditionally independent
given $\mathcal{F}_n$, we have
\begin{equation}\label{eq:covar-conditional-M}
E[\Delta{M}_{n+1,h}\Delta{M}_{n+1,k}\,|\,{\mathcal F}_n]=0\quad
\mbox{ for } h\neq k;
\end{equation}
while, for each $k$, we have
\begin{equation}\label{eq:var-conditional-M}
E[(\Delta{ M}_{n+1,k})^2\,|\,{\mathcal F}_n]=
\left(\sum_{j=1}^N w_{j,k}Z_{n,j}\right)
\left(1-\sum_{j=1}^N w_{j,k}Z_{n,j}\right).
\end{equation}
From this last equality, using synchronization and the normalization
$W^{\top}{\mathbf 1}={\mathbf 1}$, we immediately obtain
\begin{equation}\label{eq:limite-conditional-M}
E[(\Delta{ M}_{n+1,k})^2\,|\,{\mathcal F}_n]
\stackrel{a.s.}\longrightarrow Z_{\infty}(1-Z_{\infty}).
\end{equation}

%%%%%%%%%%%%%%%%%%%%%%%%%%%%%%%%%%%%%%%%%%%%%%%%%%%%%%%%%%%%%%%%%%%%%%%%
\subsection{A CLT for $\widetilde{Z}_{n}$}
\label{subsection_asymptotic_results_Z_tilde}
%%%%%%%%%%%%%%%%%%%%%%%%%%%%%%%%%%%%%%%%%%%%%%%%%%%%%%%%%%%%%%%

The following result gives a central limit theorem for the real-valued
stochastic process $(\widetilde{Z}_{n})_n$.

\begin{theo}\label{thm:asymptotics_Z_tilde}
For $1/2<\gamma\leq 1$, we have
\begin{equation}\label{eq:CLT_Z_tilde}
n^{\gamma-\frac{1}{2}}
\left(\widetilde{Z}_{n}-Z_{\infty}\right)\
{\longrightarrow}\
\mathcal{N}
\left(\ 0\ ,\ \widetilde{\sigma}_{\gamma}^2\,Z_{\infty}(1-Z_{\infty})\ \right)\
\ \ \ stably,
\end{equation}
where $\widetilde{\sigma}_{\gamma}^2$ is defined
in~\eqref{def:sigmatilde-gamma}.  The above convergence is also in the
sense of the almost sure conditional convergence w.r.t. ${\mathcal
  F}=({\mathcal F}_n)_n$.
\end{theo}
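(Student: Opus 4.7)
The plan is to represent $\widetilde{Z}_n - Z_\infty$ as the tail of an $L^2$-martingale and apply a stable CLT for martingale tails (of the kind used in \cite{cri-dai-lou-min} and recalled in the appendix). Since $(\widetilde{Z}_n)_n$ is a bounded martingale converging a.s.\ to $Z_\infty$, telescoping the increment formula~\eqref{eq:dynamic_SA_tilde} yields the exact representation
\begin{equation*}
Z_\infty - \widetilde{Z}_n\ =\ N^{-1/2}\sum_{k=n}^{\infty} r_k\,\mathbf{v}_1^\top \Delta\mathbf{M}_{k+1},
\end{equation*}
so that the theorem reduces to showing that $T_n := n^{\gamma - 1/2} N^{-1/2} \sum_{k\geq n} r_k\,\mathbf{v}_1^\top \Delta\mathbf{M}_{k+1}$ converges $\mathcal{F}$-stably (and in the a.s.\ conditional sense) to a centered Gaussian with the prescribed random variance $\widetilde{\sigma}_\gamma^2 Z_\infty(1-Z_\infty)$.

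\textbf{Conditional variance.} Using the conditional orthogonality of $\{\Delta M_{k+1,j}\}_j$ recorded in~\eqref{eq:covar-conditional-M}-\eqref{eq:var-conditional-M}, I would first compute
\begin{equation*}
E[T_n^2\mid\mathcal{F}_n]\ =\ \frac{n^{2\gamma - 1}}{N}\sum_{k=n}^\infty r_k^2\, v_k, \qquad v_k\ :=\ \mathbf{v}_1^\top E[(\Delta\mathbf{M}_{k+1})(\Delta\mathbf{M}_{k+1})^\top \mid \mathcal{F}_k]\,\mathbf{v}_1.
\end{equation*}
By~\eqref{eq:multidim-limite-conditional-M}, $v_k\xrightarrow{a.s.} \|\mathbf{v}_1\|^2 Z_\infty(1-Z_\infty)$; Assumption~\ref{ass:r_n} gives $r_k^2 \sim c^2 k^{-2\gamma}$; and the elementary tail bound $\sum_{k\geq n} k^{-2\gamma}\sim n^{1-2\gamma}/(2\gamma - 1)$ (valid because $\gamma > 1/2$) yields $n^{2\gamma - 1}\sum_{k\geq n} r_k^2 \to c^2/(2\gamma - 1)$. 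Combining these ingredients via a Cesaro/Toeplitz argument to move the a.s.\ limit of $v_k$ past the weighted tail sum, I get
\begin{equation*}
E[T_n^2 \mid \mathcal{F}_n]\ \xrightarrow{a.s.}\ \frac{c^2\|\mathbf{v}_1\|^2}{N(2\gamma - 1)}\,Z_\infty(1-Z_\infty)\ =\ \widetilde{\sigma}_\gamma^2\, Z_\infty(1-Z_\infty).
\end{equation*}

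\textbf{Lindeberg condition and conclusion.} Because $X_{k+1,j}, Z_{k,j}\in[0,1]$, each summand of $T_n$ is bounded in modulus by $C\, n^{\gamma - 1/2} r_k$, which for $k\geq n$ is of order $n^{\gamma - 1/2}\cdot n^{-\gamma} = n^{-1/2}\to 0$; this gives a uniform bound on the increments, so any Lindeberg-type hypothesis of the stable CLT is trivially satisfied. Plugging these two facts into the stable CLT for martingale tails then produces the stated stable convergence, and since the random variance $\widetilde{\sigma}_\gamma^2 Z_\infty(1-Z_\infty)$ is $\mathcal{F}_\infty$-measurable, the convergence automatically upgrades to almost sure conditional convergence with respect to $\mathcal{F}$. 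The main obstacle I anticipate is not any of the individual estimates above but rather the careful invocation of a stable CLT with random limiting variance in the tail-martingale setting; once that framework is in place (via \cite{cri-dai-lou-min} or the appendix), everything reduces to the variance asymptotics of the middle step.
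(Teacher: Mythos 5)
Your overall strategy matches the paper's almost exactly: you write $\widetilde{Z}_n - Z_\infty$ as the tail of the bounded $L^2$-martingale $(\widetilde{Z}_m)_{m\geq n}$ via the telescoping identity following from~\eqref{eq:dynamic_SA_tilde}, and you intend to close via a stable CLT for martingale tails (the paper uses Theorem~\ref{fam_tri_vet_as_inf}, i.e.\ Crimaldi's almost sure conditional CLT). The sup-of-increments estimate $\sup_{k\geq n} n^{\gamma-1/2} r_k = O(n^{-1/2})\to 0$ is also what the paper uses. But there is one genuine gap and one incorrect shortcut.

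The gap is in the variance step. Theorem~\ref{fam_tri_vet_as_inf} requires the almost sure convergence of the quadratic variation $U_n := \sum_{j\geq 1} Y_{n,j}^2$ itself, i.e.\ of the random sum
\begin{equation*}
N^{-1} n^{2\gamma-1}\sum_{k\geq n} r_k^2\,(\mathbf{v}_1^{\top}\Delta\mathbf{M}_{k+1})^2,
\end{equation*}
whereas what you compute (correctly) is the almost sure limit of its conditional expectation $E[T_n^2\mid\mathcal{F}_n]$. These are not the same object, and the Toeplitz/Cesaro argument you invoke only applies to the deterministic-weighted sequence of conditional expectations, not to the actual squared increments. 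To pass from the predictable variation to the quadratic variation you need a martingale strong law tailored to weighted tail sums; this is precisely what the paper gets from Lemma~4.1 of \cite{cri-dai-min}, which you would need to cite or reprove. Without that bridging lemma the key hypothesis of Theorem~\ref{fam_tri_vet_as_inf} is not established.

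The incorrect shortcut is the final sentence: stable convergence with an $\mathcal{F}_\infty$-measurable random variance does not ``automatically upgrade'' to almost sure conditional convergence with respect to $(\mathcal{F}_n)_n$. Almost sure conditional convergence is strictly stronger than stable convergence, and the $\mathcal{F}_\infty$-measurability of the limit variance is not a mechanism for upgrading; rather, you must invoke from the outset a theorem (such as Theorem~\ref{fam_tri_vet_as_inf}) whose conclusion is already in the almost sure conditional form. Once you appeal to that theorem with the corrected variance argument, you get both the stable and the almost sure conditional statements simultaneously, and the ``upgrade'' step becomes unnecessary.
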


\begin{proof} We want to
apply Theorem \ref{fam_tri_vet_as_inf}.  Let us consider, for each
$n\geq 1$ the filtration $({\mathcal F}_{n,h})_h$ and the process
$(L_{n,h})_h$ defined by
\begin{equation*}
{\mathcal F}_{n,0}={\mathcal F}_{n,1}={\mathcal F}_n, \qquad
L_{n,0}=L_{n,1}=0
\end{equation*}
and, for $h\geq 2$,
\begin{equation*}
{\mathcal F}_{n,h}={\mathcal F}_{n+h-1}, \qquad
L_{n,h}=n^{\gamma-\frac{1}{2}}({\widetilde Z}_n-{\widetilde Z}_{n+h-1}).
\end{equation*}
By \eqref{eq:dynamic_SA_tilde} and \eqref{eq:conv_Z_tilde}, the
process $(L_{n,h})_h$ is a martingale w.r.t. $({\mathcal F}_{n,h})_h$
which converges (for $h\to +\infty$) a.s. and in $L^1$ to the random
variable $L_{n,\infty}=n^{\gamma -\frac{1}{2}}(Z_n-Z_{\infty})$. In
addition, the increment $Y_{n,j}=L_{n,j}-L_{n,j-1}$ is equal to zero
for $j=1$ and, for $j\geq 2$, it coincides with a random variable of
the form $n^{\gamma
  -\frac{1}{2}}(\widetilde{Z}_k-\widetilde{Z}_{k+1})$ with $k\geq
n$. Therefore, again by \eqref{eq:dynamic_SA_tilde}, we have
\[\begin{aligned}
\sum_{j\geq1} Y_{n,j}^2\ &&=&\
n^{2\gamma-1}\sum_{k\geq n}(\widetilde{Z}_{k}-\widetilde{Z}_{k+1})^2\ =\
N^{-1} n^{2\gamma-1}\sum_{k\geq n} r_k^2 (\mathbf{v}_1^{\top}\Delta\mathbf{M}_{n+1})^2\\
&&\stackrel{a.s.}{\sim}&\
N^{-1}c^2n^{2\gamma-1}\sum_{k\geq n}k^{-2\gamma}(\mathbf{v}_1^{\top}
\Delta\mathbf{M}_{n})^2\\
&&\stackrel{a.s.}{\longrightarrow}
&\ \frac{c^2}{N}\frac{\|\mathbf{v}_1\|^2}{(2\gamma-1)}Z_{\infty}(1-Z_{\infty}),
\end{aligned}\]
where the last part follows by applying \cite[Lemma 4.1]{cri-dai-min}
and by noticing that \eqref{eq:limite-conditional-M} implies
$$
E[(\mathbf{v}_1^{\top}\Delta\mathbf{M}_{n+1})^2|\mathcal{F}_{n}]\ =\
\sum_{k=1}^N v^2_{1,k}\,E[(\Delta M_{n+1,k})^2|\mathcal{F}_{n}]\
\stackrel{a.s.}{\longrightarrow}\
\sum_{j=1}^N v^2_{1,k}Z_{\infty}(1-Z_{\infty})\ =\
\|\mathbf{v}_1\|^2 Z_{\infty}(1-Z_{\infty}).
$$
Finally, again by \eqref{eq:dynamic_SA_tilde}, we have
$$
Y_n^*=\sup_{j\geq 1}|Y_{n,j}|=
n^{\gamma-\frac{1}{2}}\, \sup_{k\geq n} | {\widetilde Z}_k-{\widetilde Z}_{k+1}|
\leq \sup_{k\geq n}\, k^{\gamma-\frac{1}{2}} r_k \longrightarrow 0.
$$
Hence, if in Theorem \ref{fam_tri_vet_as_inf}, we take $k_n=1$ for
each $n$ and ${\mathcal U}=\bigvee_n{\mathcal F}_n$, then the proof is
concluded.
\end{proof}

%%%%%%%%%%%%%%%%%%%%%%%%%%%%%%%%%%%%%%%%%%%%%%%%%%%%%%%%%%%%%%%%%%%%
\subsection{Proofs of Theorem \ref{thm:Z-no-atomi} and
Theorem \ref{thm:distribution_Z} (Results on the distribution of
$Z_\infty$)}
%%%%%%%%%%%%%%%%%%%%%%%%%%%%%%%%%%%%%%%%%%%%%%%%%%%%%%%%%%%%%%%%%%%%

\indent The proof of Theorem \ref{thm:Z-no-atomi} is a consequence of
the almost sure conditional convergence in Theorem
\ref{thm:asymptotics_Z_tilde}, exactly as shown in
\cite{cri-dai-lou-min}.  \\

To the proof of Theorem \ref{thm:distribution_Z} we premise the
following lemma.

\begin{lem}
If condition \eqref{Z01_Hp} holds, then we have
\begin{equation}\label{x_nmaggiore0}
E\left[\widetilde{Z}_n(1-\widetilde{Z}_n)\right]>0\qquad\forall n\geq0.
\end{equation}
\end{lem}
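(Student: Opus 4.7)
The plan is to translate the target inequality into a statement about $P(\widetilde{Z}_n\in\{0,1\})$, then propagate \eqref{Z01_Hp} from time $0$ to time $n$ by induction, exploiting only the strict inequality $r_{n-1}<1$ and the fact that $\widetilde{Z}_n$ is a convex combination of the $Z_{n,k}$ with strictly positive weights.

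First, I would record the shape of $\widetilde{Z}_n$. By~\eqref{eq:relazioni-1}, the vector $N^{-1/2}\mathbf{v}_1$ has strictly positive entries summing to $1$, so $\widetilde{Z}_n=N^{-1/2}\mathbf{v}_1^{\top}\mathbf{Z}_n$ is a convex combination (with strictly positive weights) of $Z_{n,1},\dots,Z_{n,N}\in[0,1]$. In particular $\widetilde{Z}_n\in[0,1]$, and
$$
\widetilde{Z}_n(1-\widetilde{Z}_n)=0 \ \Longleftrightarrow\ \widetilde{Z}_n\in\{0,1\}\ \Longleftrightarrow\ \omega\in A_n\cup B_n,
$$
where $A_n:=\bigcap_{k=1}^N\{Z_{n,k}=0\}$ and $B_n:=\bigcap_{k=1}^N\{Z_{n,k}=1\}$. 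Therefore \eqref{x_nmaggiore0} is equivalent to $P(A_n\cup B_n)<1$, and the hypothesis~\eqref{Z01_Hp} is exactly $P(A_0\cup B_0)<1$.

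Next I would prove the pathwise inclusion $A_n^c\cap B_n^c\supseteq A_0^c\cap B_0^c$ by induction on $n$. Fix $\omega\in A_n^c\cap B_n^c$, so there exist indices $k_0,k_1$ with $Z_{n,k_0}(\omega)>0$ and $Z_{n,k_1}(\omega)<1$. Using~\eqref{interacting-2} and the fact that $0\le r_{n-1}<1$, one has $Z_{n+1,k_0}(\omega)\ge(1-r_n)Z_{n,k_0}(\omega)>0$, so $\omega\notin A_{n+1}$; similarly $Z_{n+1,k_1}(\omega)\le(1-r_n)Z_{n,k_1}(\omega)+r_n<1$, so $\omega\notin B_{n+1}$. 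This gives the claimed inclusion, hence $P(A_n^c\cap B_n^c)\ge P(A_0^c\cap B_0^c)>0$, which is equivalent to $P(A_n\cup B_n)<1$.

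Combining the two steps yields $E[\widetilde{Z}_n(1-\widetilde{Z}_n)]\ge P(A_0^c\cap B_0^c)\cdot \min_{\omega\in A_0^c\cap B_0^c}\widetilde{Z}_n(1-\widetilde{Z}_n)(\omega)>0$ (the minimum is strictly positive once we know the support avoids $\{0,1\}$ with positive probability; equivalently, just apply $P(\widetilde Z_n\in(0,1))>0$ to conclude $E[\widetilde Z_n(1-\widetilde Z_n)]>0$). The only subtle point—hardly an obstacle—is remembering to separate the \emph{not all zero} and \emph{not all one} conditions: the two surviving indices $k_0$ and $k_1$ need not coincide, and each is handled by a one-sided monotonicity bound coming directly from $r_n\in[0,1)$.
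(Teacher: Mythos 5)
Your proof is correct, but it takes a genuinely different route from the paper's. The paper argues via second moments and the martingale structure: setting $x_n=E[\widetilde Z_n(1-\widetilde Z_n)]$, it uses $E[\widetilde Z_n]=E[\widetilde Z_0]$ to write $x_n=E[\widetilde Z_0]-E[\widetilde Z_n^2]$, bounds $(N^{-1/2}\mathbf{v}_1^{\top}\mathbf{X}_n)^2\le N^{-1/2}\mathbf{v}_1^{\top}\mathbf{X}_n$ to obtain the recursion $E[\widetilde Z_n^2]\le(1-r_{n-1}^2)E[\widetilde Z_{n-1}^2]+r_{n-1}^2E[\widetilde Z_0]$, and then closes by induction using $r_{n-1}^2<1$. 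You instead translate the target into $P(\widetilde Z_n\in\{0,1\})<1$, identify $\{\widetilde Z_n\in\{0,1\}\}$ with $A_n\cup B_n$ via the strictly positive weights $N^{-1/2}v_{1,k}$, and prove the pathwise inclusion $A_n^c\cap B_n^c\subseteq A_{n+1}^c\cap B_{n+1}^c$ from the coordinatewise recursion and $r_n<1$, so the positive-probability event never dies. Your argument is more elementary — it uses nothing about $\widetilde Z_n$ being a martingale, only one-step monotonicity of each $Z_{n,k}$ — while the paper's produces the sharper quantitative statement $E[\widetilde Z_n^2]<E[\widetilde Z_0]$, which is structurally parallel to the recursion later exploited in the proof of Theorem~\ref{thm:distribution_Z}. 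One small wrinkle: the ``$\min_{\omega\in A_0^c\cap B_0^c}\widetilde Z_n(1-\widetilde Z_n)(\omega)>0$'' clause is not justified (the infimum over that event need not be bounded away from zero), but the parenthetical you add immediately after — a nonnegative random variable that is strictly positive on a set of positive probability has strictly positive expectation — is the correct way to finish and makes the argument sound.
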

\begin{proof} For convenience, set
$x_n:=E\left[\widetilde{Z}_n(1-\widetilde{Z}_n)\right]$. We recall that
$\widetilde{Z}_n=N^{-1/2}{\mathbf v}_1^{\top}\mathbf{Z}_n$, where
  $\mathbf{v}_1$ is such that
\begin{equation}\label{cond-v1}
v_{1,k}>0\quad\forall k\quad\mbox{and}\quad N^{-1/2}\sum_{k=1}^N v_{1,k}=1.
\end{equation}
Hence, under assumption \eqref{Z01_Hp}, we immediately get
$x_0>0$. \\
\indent Now, we recall that $(\widetilde{Z}_n)_n$ is a bounded martingale
which satisfies \eqref{eq:dynamic_SA_tilde}, that is
$$
\widetilde{Z}_n=
(1-r_{n-1})\widetilde{Z}_{n-1}+r_{n-1} N^{-1/2}{\mathbf v}_1^{\top}\mathbf{X}_n
$$
with $E[N^{-1/2}{\mathbf v}_1^{\top}\mathbf{X}_n\,|\, {\mathcal F}_{n-1}]=
\widetilde{Z}_{n-1}$.  Therefore we have
$x_n=(E[\widetilde{Z}_0]-E[\widetilde{Z}_n^2])$ for each $n$ and
\begin{equation*}
\begin{split}
\widetilde{Z}_n^2&=
(1-r_{n-1})^2\widetilde{Z}_{n-1}^2
+2(1-r_{n-1})r_{n-1}\widetilde{Z}_{n-1}N^{-1/2}{\mathbf v}_1^{\top}\mathbf{X}_n
+r_{n-1}^2(N^{-1/2}{\mathbf v}_1^{\top}\mathbf{X}_n)^2\\
&\leq
(1-r_{n-1})^2\widetilde{Z}_{n-1}^2
+2(1-r_{n-1})r_{n-1}\widetilde{Z}_{n-1}N^{-1/2}{\mathbf v}_1^{\top}\mathbf{X}_n
+r_{n-1}^2(N^{-1/2}{\mathbf v}_1^{\top}\mathbf{X}_n)
\end{split}
\end{equation*}
Taking the conditional expectation given ${\mathcal F}_{n-1}$, we get
\begin{equation*}
E\left[\widetilde{Z}_n^2\,|\,{\mathcal F}_{n-1}\right]
\leq
(1-r_{n-1}^2)\widetilde{Z}_{n-1}^2+r_{n-1}^2\widetilde{Z}_{n-1},
\end{equation*}
which implies
\begin{equation*}
E[\widetilde{Z}_n^2]\leq
(1-r_{n-1}^2)E[\widetilde{Z}_{n-1}^2]
+r_{n-1}^2E[\widetilde{Z}_{n-1}]
=(1-r_{n-1}^2)E[\widetilde{Z}_{n-1}^2]
+r_{n-1}^2E[\widetilde{Z}_{0}].
\end{equation*}
Therefore, we can conclude by an induction argument on $n$. Indeed, if
$x_{n-1}>0$, i.e. $E[\widetilde{Z}_{n-1}^2]<E[\widetilde{Z}_{0}]$,
then from the above inequality, since $(1-r_{n-1}^2)>0$ by assumption, we
obtain $E[\widetilde{Z}_n^2]<E[\widetilde{Z}_{0}]$, i.e. $x_n>0$.
\end{proof}

We are now ready to prove Theorem  \ref{thm:distribution_Z}.
\\

\noindent {\it Proof of Theorem \ref{thm:distribution_Z}.}  \indent We
recall that $Z_{\infty}$ takes values in $[0,1]$ and
$(\widetilde{Z}_n)_n$ is a bounded martingale which converges a.s. (and
in $L^p$) to $Z_\infty$. Therefore, in particular, setting
$\widetilde{z}_0:=E[\widetilde{Z}_0]$, we have
\begin{equation*}
E[Z_{\infty}]=E[\widetilde{Z}_n]=\widetilde{z}_0\quad
\forall n\qquad\mbox{ and }\qquad
Var[Z_{\infty}]=\lim_{n\rightarrow\infty} Var[\widetilde{Z}_n].
\end{equation*}
Now, as in the proof of the previous Lemma, we set
\begin{equation}\label{def-xn}
x_n:=E\left[\widetilde{Z}_n(1-\widetilde{Z}_n)\right]=
\widetilde{z}_0-\widetilde{z}_0^2-Var[\widetilde{Z}_n]
\end{equation}
and we can state that
$$
P(Z_{\infty} \in\{0, 1\}) = 1 \;\qquad\mbox{if and only if }\qquad\;
E\left[Z_{\infty}(1-Z_{\infty})\right]=
\lim_n x_n=0.
$$
Thus, it is enough to prove that assumption \eqref{Z01_Hp} implies
$\lim_n x_n>0$. To this purpose, we observe that, by
\eqref{eq:dynamic_SA_tilde}, we have
\begin{equation}\label{eq:steps_x_n}
\begin{split}
x_{n+1}&=
\widetilde{z}_0-\widetilde{z}_0^2 - Var[\widetilde{Z}_{n+1}] =
\widetilde{z}_0-\widetilde{z}_0^2 -
E\left[\, Var[\widetilde{Z}_{n+1} \,|\,{\mathcal F}_n]\,\right]
- Var\!\left[\,E[\widetilde{Z}_{n+1} \,|\,{\mathcal F}_n]\,\right]
\\
&=\widetilde{z}_0-\widetilde{z}_0^2 -
\frac{r_n^2}{N}
E\left[\,
E\left[\,
(\mathbf{v}_1^{\top}\Delta\mathbf{M}_{n+1})^2\,|\,{\mathcal F}_n
\,\right]
\,\right]
- Var[\widetilde{Z}_n]
\\
&=x_n - \frac{r_n^2}{N}
E\left[\,
E\left[\,
(\mathbf{v}_1^{\top}\Delta\mathbf{M}_{n+1})^2\,|\,{\mathcal F}_n
\,\right]
\,\right].
\end{split}
\end{equation}
Setting ${\mathbf Y}_n=E[\mathbf{X}_n|\mathcal{F}_{n-1}]=
W^{\top}\mathbf{Z}_n$ (whose components obviously belong to $[0,1]$) and
recalling \eqref{eq:covar-conditional-M} and
\eqref{eq:var-conditional-M}, we obtain
\begin{equation}\label{eq:v1_martingale}
E\left[\,
(\mathbf{v}_1^{\top}\Delta\mathbf{M}_{n+1})^2\,|\,{\mathcal F}_n
\,\right]
\ \!=\!\
\sum_{k=1}^N v_{1,k}^2{Y}_{n,k}(1 - {Y}_{n,k}).
\end{equation}
Now, notice that
\begin{equation*}
N^{-1/2}\mathbf{v}_1^{\top}{\mathbf Y}_n\ =\
N^{-1/2}\mathbf{v}_1^{\top}W^{\top}\mathbf{Z}_n\ =\
N^{-1/2}(W\mathbf{v}_1)^{\top}\mathbf{Z}_n\ =\
N^{-1/2}\mathbf{v}_1^{\top}\mathbf{Z}_n\ =\ \widetilde{Z}_n,
\end{equation*}
and so, for any $k=1,..,N$,
\begin{equation}\label{eq:inequality2_variance_Z_tilde}
N^{-1/2}v_{1,k}{Y}_{n,k}\ =\
\widetilde{Z}_n-N^{-1/2}\sum_{j\neq k}v_{1,j}{Y}_{n,j}\ \leq\
\widetilde{Z}_n.
\end{equation}
Analogously, notice that
\begin{equation*}
N^{-1/2}\mathbf{v}_1^{\top}(\mathbf{1}-{\mathbf Y}_n)\ =\
\left(N^{-1/2}\mathbf{v}_1^{\top}\mathbf{1}\right)\ -\
\left(N^{-1/2}\mathbf{v}_1^{\top}{\mathbf Y}_n\right)\ =
\ 1-\widetilde{Z}_n
\end{equation*}
and so, for any $k=1,..,N$,
\begin{equation}\label{eq:inequality2_variance_1-Z_tilde}
N^{-1/2}v_{1,k}(1-{Y}_{n,k})\ =\
(1-\widetilde{Z}_n)-N^{-1/2}\sum_{j\neq k}v_{1,j}(1-{Y}_{n,j})\ \leq\
1-\widetilde{Z}_n.
\end{equation}
Then, combining~\eqref{eq:inequality2_variance_Z_tilde}
and~\eqref{eq:inequality2_variance_1-Z_tilde},
we get for any $k=1,..,N$
\begin{equation*}
v_{1,k}^2{Y}_{n,k}(1 - {Y}_{n,k})\ \leq\
N \widetilde{Z}_n(1-\widetilde{Z}_n),
\end{equation*}
and hence, recalling~\eqref{def-xn}, \eqref{eq:steps_x_n}
and~\eqref{eq:v1_martingale}, we
obtain
\begin{equation*}
x_{n+1} \geq x_n - Nr_n^2E[\widetilde{Z}_n(1-\widetilde{Z}_n)]
=(1-N r_n^2)x_n.
\end{equation*}
Finally, taking $\bar{n}$ such that
$Nr_n^2 < 1$ for any $n\geq \bar{n}$, we find
\begin{equation*}
x_{n+1}\geq x_{\bar{n}} \prod_{m=\bar{n}}^{n} \left( 1- N r_m^2\right).
\end{equation*}
Hence, since $x_{\bar{n}}>0$ by the previous Lemma and
$\sum_n r_n^2 < +\infty$ for $1/2<\gamma\leq 1$, we
can conclude that $\lim_n x_n>0$.
\qed

%%%%%%%%%%%%%%%%%%%%%%%%%%%%%%%%%%%%%%%%%%%%%%%%%%%%%%%%%%%%%%%%%%%%%%%%%
\subsection{A CLT for $\widehat{\mathbf{Z}}_{n}$}
\label{subsection_asymptotic_results_Z_hat}
%%%%%%%%%%%%%%%%%%%%%%%%%%%%%%%%%%%%%%%%%%%%%%%%%%%%%%%%%%%%%%%%%%%%%%%%%

The following result provides a central limit theorem for the
multi-dimensional real stochastic process $(\widehat{\mathbf{Z}}_{n})_n$.

\begin{theo}\label{thm:asymptotics_Z_hat}
We have:
\begin{itemize}
\item[(a)] If $1/2<\gamma<1$, then
\begin{equation}\label{eq:CLT_Z_hat_gamma}
n^{\frac{\gamma}{2}}\,
\widehat{\mathbf{Z}}_{n}\ \longrightarrow
\
\mathcal{N}\left(\ 0\ ,\ Z_{\infty}(1-Z_{\infty})\widehat{\Sigma}_{\gamma}\
\right),
\ \ \ \ stably
\end{equation}
where $\widehat{\Sigma}_{\gamma}$ is defined in~\eqref{def:sigmahat_gamma}.

\item[(b)] If $\gamma=1$ and ${\mathcal R}e(\lambda^{*})<1-(2c)^{-1}$, then
\begin{equation}\label{eq:CLT_Z_hat_less}
\sqrt{n}\,
\widehat{\mathbf{Z}}_{n}\ {\longrightarrow}
\
\mathcal{N}\left(\ 0\ ,\ Z_{\infty}(1-Z_{\infty})\widehat{\Sigma}_1\ \right),
\ \ \ \ stably
\end{equation}
where $\widehat{\Sigma}_1$ is defined in~\eqref{def:sigmahat1}.

\item[(c)] If $\gamma=1$, $\mathcal{R}e(\lambda^{*})=1-(2c)^{-1}$
and~\eqref{eq:condition_r_n_bis_main} holds, then
\begin{equation}\label{eq:CLT_Z_hat_equal}
\sqrt{\frac{n}{\ln(n)}}\,
\widehat{\mathbf{Z}}_{n}\ \longrightarrow
\
\mathcal{N}\left(\ 0\ ,\ Z_{\infty}(1-Z_{\infty})\widehat{\Sigma}_1^*\
\right),
\ \ \ \ stably
\end{equation}
where $\widehat{\Sigma}_1^*$ is defined in~\eqref{def:sigmahat1_star}.
\end{itemize}
\end{theo}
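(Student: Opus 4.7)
I would diagonalize the linear dynamics via the change of variables $\mathbf{Z}_{V,n}:=V^{\top}\widehat{\mathbf{Z}}_n$ (recalling $\widehat{\mathbf{Z}}_n = U\mathbf{Z}_{V,n}$ from~\eqref{eq:relazioni-2}) and apply a martingale CLT for triangular arrays, in the spirit of Theorem~\ref{fam_tri_vet_as_inf} already used for $\widetilde{Z}_n$. Multiplying~\eqref{eq:dynamic_SA_hat} on the left by $V^{\top}$ gives the diagonalized linear recursion
$$\mathbf{Z}_{V,n+1}\ =\ \bigl(I-r_n(I-D)\bigr)\mathbf{Z}_{V,n}\ +\ r_n V^{\top}\Delta\mathbf{M}_{n+1},$$
which unrolls to $\mathbf{Z}_{V,n}=\psi_n^{0}\mathbf{Z}_{V,0}+\sum_{k=0}^{n-1}r_k\,\psi_n^{k+1}V^{\top}\Delta\mathbf{M}_{k+1}$, where $\psi_n^{k}$ is the diagonal matrix whose $(h,h)$-entry is $\prod_{i=k}^{n-1}\bigl(1-r_i(1-\lambda_h)\bigr)$ for $h=2,\dots,N$. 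Since $\mathcal{R}e(\lambda_h)<1$ for every such $h$, the deterministic first term decays faster than any of the scaling sequences $a_n\in\{n^{\gamma/2},\sqrt{n},\sqrt{n/\ln n}\}$ can amplify, so it is negligible after multiplication by $a_n$.

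The remaining $a_n$-scaled tail sum is a martingale triangular array with respect to $(\mathcal{F}_k)_k$. The Lindeberg condition is immediate from $\|\Delta\mathbf{M}_{k+1}\|\leq\sqrt{N}$ and $a_n r_n\to 0$ in every regime. The substance is the asymptotic identification of the conditional quadratic variation
$$a_n^2\sum_{k=0}^{n-1}r_k^2\,\psi_n^{k+1}V^{\top}\,E\bigl[\Delta\mathbf{M}_{k+1}\Delta\mathbf{M}_{k+1}^{\top}\,\big|\,\mathcal{F}_k\bigr]V\bigl(\psi_n^{k+1}\bigr)^{\top},$$
which, after conjugation by $U$, must converge a.s.\ to the target $U\widehat{S}U^{\top}$ appearing in the theorem. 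Using~\eqref{eq:multidim-limite-conditional-M}, the above reduces a.s.\ to $Z_\infty(1-Z_\infty)$ times a deterministic matrix whose $(h,j)$-entry equals $(\mathbf{v}_h^{\top}\mathbf{v}_j)\cdot a_n^2\sum_{k=0}^{n-1}r_k^2\prod_{i=k+1}^{n-1}\bigl(1-r_i(1-\lambda_h)\bigr)\bigl(1-r_i(1-\lambda_j)\bigr)$.

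The three parts of the theorem then correspond to an asymptotic evaluation of this scalar sum with $\alpha:=2-\lambda_h-\lambda_j$. In case (a), Euler's approximation $\prod_i(1-r_i\beta)\approx\exp\bigl(-\beta c(n^{1-\gamma}-k^{1-\gamma})/(1-\gamma)\bigr)$ together with the substitution $n-k=n^{\gamma}u/c$ converts the sum into $\int_0^{\infty}e^{-\alpha u}\,du=1/\alpha$ times $c\,n^{-\gamma}(1+o(1))$, so multiplication by $a_n^2=n^{\gamma}$ yields $c/\alpha$, matching $\widehat{S}_\gamma$. In case (b), $r_i\sim c/i$ gives $\prod_i(1-r_i\beta)\sim(k/n)^{c\beta}$, and the sum behaves as $c^2 n^{-1}\int_0^{1}s^{c\alpha-2}ds$; integrability requires $c\alpha>1$, which is precisely $\mathcal{R}e(\lambda^{*})<1-(2c)^{-1}$, and $a_n^2=n$-scaling produces $c^2/(c\alpha-1)$, matching $\widehat{S}_1$. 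In case (c), when $\lambda_h+\lambda_j=2-c^{-1}$ the integrand becomes $s^{-1}$ and produces a $\ln n$ divergence that is cancelled by $a_n^2=n/\ln n$ to give $c^2$; pairs with $\lambda_h+\lambda_j\neq 2-c^{-1}$ contribute $O(1/\ln n)$ and drop out, isolating the sparse matrix $\widehat{S}_1^{*}$.

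The chief obstacle will be the critical case (c): the coefficient of $\ln n$ is sensitive to the first-order correction $nr_n-c$, so only under the refined rate~\eqref{eq:condition_r_n_bis_main} is the discrepancy between $\prod_i(1-r_i\beta)$ and $(k/n)^{c\beta}$ summable against the critical integrand $s^{-1}$, allowing the constant $c^2$ to be preserved. A routine secondary point is to upgrade the a.s.\ limit~\eqref{eq:multidim-limite-conditional-M} to a.s.\ convergence of the full weighted conditional sum, which follows by a Kronecker-type argument exploiting $\sum_n r_n^2<\infty$ for $\gamma>1/2$ and the uniform boundedness of $\Delta\mathbf{M}_{k+1}$.
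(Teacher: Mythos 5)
Your sketch follows the paper's route step for step: multiply by $V^{\top}$ to diagonalize, unroll the linear recursion, discard the deterministic initial term, apply a stable-convergence triangular-array martingale CLT (the paper invokes Theorem~\ref{thm:triangular}; Theorem~\ref{fam_tri_vet_as_inf} is the a.s.\ conditional variant, used for the scalar $\widetilde Z_n$), and reduce the quadratic-variation limit to an asymptotic evaluation of the scalar sums $t_n^2\,p_{n,h}p_{n,j}\sum_k r_k^2\,\ell_{k,h}\ell_{k,j}$ via the complex Toeplitz/Kronecker machinery (Lemma~\ref{lemma-serie-rv}, Lemmas~\ref{lemma-tecnico_2}--\ref{lemma-tecnico_3}). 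Cases (a) and (b) match exactly, down to the constants $c/\alpha$ and $c^2/(c\alpha-1)$.

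In case (c), however, one step as written would fail. The hypothesis $\mathcal{R}e(\lambda^*)=1-(2c)^{-1}$ fixes only real parts, so there can be pairs $(h,j)$ with $\mathcal{R}e(\lambda_h)+\mathcal{R}e(\lambda_j)=2-c^{-1}$ but $\mathcal{I}m(\lambda_h)+\mathcal{I}m(\lambda_j)\neq 0$. For such pairs $\mathcal{R}e(c\alpha)=1$ with $\alpha=2-\lambda_h-\lambda_j$, so your integrand $s^{c\alpha-2}$ has modulus $s^{-1}$; hence $\sum_k r_k^2|\ell_{k,h}\ell_{k,j}|=\Theta(\ln n)$ and the $(n/\ln n)$-scaled sum is $\Theta(1)$ in modulus, not $O(1/\ln n)$. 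These entries vanish only through oscillatory cancellation of the complex products $\ell_{k,h}\ell_{k,j}$, which a modulus bound cannot capture; the paper handles them by the telescoping identity $\sum_k D_{\ln,k}$ combined with the complex Toeplitz lemma in the $b_1+b_2\neq 0$ branch of~\eqref{affermazione2-log} in Lemma~\ref{lemma-tecnico_3}, and this is also where~\eqref{eq:condition_r_n_bis_main} is genuinely consumed (through~\eqref{affermazione1_bis}--\eqref{affermazione3-bis}), not only for the on-diagonal $\lambda_h+\lambda_j=2-c^{-1}$ entries you flag. Without this refinement you cannot conclude that the limit matrix collapses to the sparse $\widehat{S}_1^*$ of~\eqref{def:sigmahat1_star}. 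A smaller caveat: discarding the deterministic term $\sqrt n\,C_{m_0,n}\widehat{\mathbf Z}_{m_0}$ in case (b) is itself contingent on $ca^*>1/2$ (so that an $\epsilon$ can be chosen with $(1-\epsilon)ca^*>1/2$ in~\eqref{affermazione1}); it does not hold for generic $\lambda^*$.
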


\begin{proof} Set
$\alpha_j=1-\lambda_j= a_j + i\, b_j$ with $\lambda_j\in
  Sp(W)\setminus \{1\}$.  Remember that $a_j>0$ for each $j$ since
  ${\mathcal Re}(\lambda_j)<1$ for each $j$. Moreover recall the
  definition of the matrices $U,\,V$ and $D$ given in Section
  \ref{section_model} and in Subsection
  \ref{subsection_preliminaries}.\\

\noindent From dynamics \eqref{eq:dynamic_SA_hat}, we get
$$
\widehat{\mathbf{Z}}_{n+1}=
\left[I-r_nU(I-D)V^{\top}\right]\widehat{\mathbf{Z}}_{n}+
r_n\, UV^{\top}\,\Delta\mathbf{M}_{n+1}
=
U\left[I-r_n(I-D)\right]V^{\top}\widehat{\mathbf{Z}}_{n}+
r_n\, UV^{\top}\,\Delta\mathbf{M}_{n+1},
$$
where the identity matrices adopted above have different dimensions.
\noindent where the last equality holds
because relations \eqref{eq:relazioni-2}
imply $UV^{\top}\,\widehat{\mathbf{Z}}_{n}=UV^{\top}\,\mathbf{Z}_{n}=
\widehat{\mathbf{Z}}_{n}$.  Therefore, if we take $m_0$ large enough such
that $a_j r_n<1$ for $n\geq m_0$ and all $j$, we can write
\begin{equation}\label{eq-z-hat-1}
\widehat{\mathbf{Z}}_{n+1}=
C_{m_0,n}\widehat{\mathbf{Z}}_{m_0}
+
\sum_{k=m_0}^n C_{k+1,n}\,r_k UV^{\top}\,\Delta{\mathbf M}_{k+1},
\end{equation}
with
$$C_{k+1,n}=\prod_{m=k+1}^n\{U [I-r_m(I-D)] V^{\top}\}.$$ For the sequel, it
is important to note that $C_{k+1,n}$ is a real matrix since, by
\eqref{eq:relazioni-2} and \eqref{eq:decomposition-matrix} it is
equivalent to a product of real matrices, i.e. $(UV^{\top})-r_m(UV^{\top}+{\mathbf
  u}_1{\mathbf v}_1^{\top}- W^{\top})=(UV^{\top})-r_m(I-W^{\top})$.  Moreover, using
relations~\eqref{eq:relazioni-2} again, we get
\begin{equation}\label{matrice-C}
C_{k+1,n}=U A_{k+1,n} V^{\top},
\end{equation}
where $A_{k+1,n}$ is the diagonal matrix given by
\begin{equation*}
[A_{k+1,n}]_{j,j}=
\begin{cases}
\prod_{m=k+1}^n \left(1-\alpha_j r_m\right)\quad
&\mbox{for } m_0-1\leq k\leq n-1\\
1\quad&\mbox{for } k=n.
\end{cases}
\end{equation*}
Observe that we have
$$
[A_{k+1,n}]_{j,j}=\frac{p_{n,j}}{p_{k,j}}=
\frac{\ell_{k,j}}{\ell_{n,j}}\quad\mbox{for } m_0-1\leq k\leq n,
$$
with
$$
p_{m_0-1,n}=\ell_{m_0-1,n}=1, \qquad
p_{k,j}=\prod_{m=m_0}^k \left(1-\alpha_j r_m\right),\qquad
\ell_{k,j}=p_{k,j}^{-1}
\quad\mbox{for } m_0\leq k\leq n.
$$
Finally, notice that, since $C_{k+1,n}UV^{\top}=C_{k+1,n}$ by relations
\eqref{eq:relazioni-2} and \eqref{matrice-C}, we can rewrite
\eqref{eq-z-hat-1} as
$$
\widehat{\mathbf{Z}}_{n+1}=
C_{m_0,n}\widehat{\mathbf{Z}}_{m_0}
+
\sum_{k=m_0}^n {\mathbf T}_{n,k},\qquad\mbox{where}
\qquad {\mathbf T}_{n,k}=r_k C_{k+1,n} \Delta{\mathbf M}_{k+1}.
$$
We will establish the asymptotic behavior of
$\widehat{\mathbf{Z}}_{n}$ by studying separately the terms
$C_{m_0,n}\widehat{\mathbf{Z}}_{m_0}$ and $\sum_{k=m_0}^n {\mathbf
  T}_{n,k}$. \\

Concerning the first term, note that
by \eqref{affermazione1} in Lemma \ref{lemma-tecnico_1},
we have that, for any $\epsilon\in (0,1)$,
\begin{equation}\label{affermazione1_thm}
|C_{m_0,n}\widehat{\mathbf{Z}}_{m_0}|=
O\left(|p_{n}^*|\right)=
\begin{cases}
O\left(
\exp \left[ -(1-\epsilon)\frac{ca^* }{1-\gamma}n^{1-\gamma} \right]
\right) & \mbox{if } 1/2<\gamma<1
\\
O\left(\ n^{- (1-\epsilon)ca^*}\ \right)&
\mbox{if } \gamma=1,
\end{cases}
\end{equation}
where the symbol $^*$ refers to quantities $a_j$ and $p_{n,j}$
corresponding to $\lambda_j=\lambda^*\in \lambda_{\max}(D)$.
Therefore, for the case (a) (i.e. $1/2<\gamma<1$) and (b)
(i.e. $\gamma=1$ and $\mathcal{R}e(\lambda^{*})<1-(2c)^{-1}$), we
have
\begin{equation}
|C_{m_0,n}\widehat{\mathbf{Z}}_{m_0}|=o(n^{-\gamma/2}).
\end{equation}
Indeed, this fact follows immediately for $1/2<\gamma<1$ and, for $\gamma
=1$ one has to note that, since we assume
$\mathcal{R}e(\lambda^{*})<1-(2c)^{-1}$, that is $ca^*>1/2$, we can
choose $\epsilon$ small enough so that $(1-\epsilon)ca^*> 1/2$.
Moreover, for the case (c) (i.e. $\gamma=1$ and
$\mathcal{R}e(\lambda^{*})=1-(2c)^{-1}$, i.e. $ca^*=1/2$), since we
assume condition~\eqref{eq:condition_r_n_bis_main}, by
\eqref{affermazione1_bis} in Lemma \ref{lemma-tecnico_1}, we have
\begin{equation}\label{affermazione1_thm_bis}
|C_{m_0,n}\widehat{\mathbf{Z}}_{m_0}|\ =\
O\left(|p^*_{n}|\right)\ =\ O\left(n^{-c a^*}\right)\ =\
O\left(n^{-\frac{1}{2}}\right).
\end{equation}
Therefore, if we set
\begin{equation}
t_n=\begin{cases}
n^{\frac{\gamma}{2}}\quad &\mbox{for case (a)}
\\[3pt]
n^{\frac{1}{2}}\quad &\mbox{for case (b)}
\\[3pt]
\left(n/\ln(n)\right)^{\frac{1}{2}}\quad &\mbox{for case (c)},
\end{cases}
\end{equation}
then we obtain $t_n |C_{m_0,n}\widehat{\mathbf{Z}}_{m_0}|\to 0$ almost
surely.\\

We now focus on the asymptotic behavior of the second term.
Specifically, we aim at pro\-ving that $t_n\sum_{k=m_0}^n{\mathbf
  T}_{n,k}$ converges stably to a suitable Gaussian kernel.  For this
purpose, we set ${\mathcal G}_{n,k}={\mathcal F}_{k+1}$, and consider
Theorem~\ref{thm:triangular} (recall that ${\mathbf T}_{n,k}$ are real
random vectors). Given the fact that condition $(c1)$ of
Theorem~\ref{thm:triangular} is obviously satisfied, we will check
conditions $(c2)$ and $(c3)$.\\

Regarding condition $(c2)$, we observe that
\begin{equation*}
\begin{split}
\sum_{k=1}^n (t_n\mathbf{T}_{n,k})(t_n\mathbf{T}_{n,k})^{\top}
&= t_n^2\sum_{k=1}^n r_k^2
C_{k+1,n}(\Delta{\mathbf M}_{k+1})(\Delta{\mathbf M}_{k+1})^{\top}C_{k+1,n}^{\top}
\\
&= U \left(t_n^2\sum_{k=1}^n r_k^2
A_{k+1,n}\,V^{\top}\,(\Delta{\mathbf M}_{k+1})(\Delta{\mathbf M}_{k+1})^{\top}\,
V\,A_{k+1,n}
\right) U^{\top}.
\end{split}
\end{equation*}
Therefore, it is enough to study the convergence of
\begin{equation*}
t_n^2\sum_{k=1}^n r_k^2
A_{k+1,n}\,V^{\top}\,(\Delta{\mathbf M}_{k+1})(\Delta{\mathbf M}_{k+1})^{\top}\,
V\,A_{k+1,n}.
\end{equation*}
To this purpose, we set $B_{k+1,h,j}=[V^{\top}\,(\Delta{\mathbf
  M}_{k+1})(\Delta{\mathbf M}_{k+1})^{\top}\,V]_{h,j}$ and observe that an element
of the above matrix is of the form
$$
t_n^2\sum_{k=1}^{n} r_k^2 [A_{k+1,n}]_{h,h}B_{k+1,h,j}[A_{k+1,n}]_{j,j}\ =\
t_n^2p_{n,h}p_{n,j}
\sum_{k=1}^{n-1} r_k^2\ell_{k,h}\ell_{k,j}
B_{k+1,h,j}
+ t_n^2r_n^2B_{n+1,h,j},
$$ where $t_n^2r_n^2B_{n+1,h,j}=O(t_n^2r_n^2)\to 0$. We now fix $j$
and $h$ and apply Lemma \ref{lemma-serie-rv} to the first addend in the
above equality. Indeed, this quantity can be written as
$v_n\sum_{k=m_0}^{n-1} Y_k/c_kv_k$, where
$$
Y_n=B_{n+1,h,j},\quad
c_n=\frac{1}{t_n^2r_n^2} > 0
\quad\mbox{and }\quad
v_n=t_n^2p_{n,h}p_{n,j}\in{\mathbb C}\setminus\{0\}
$$
satisfy the assumptions of Lemma \ref{lemma-serie-rv}. More precisely,
setting ${\mathcal H}_n={\mathcal F}_{n+1}$, by
\eqref{eq:limite-conditional-M}, we have
\begin{equation*}
\begin{split}
E[Y_n\,|\,{\mathcal H }_{n-1}] =
E[B_{n+1,h,j}\, |\, {\mathcal F}_n] =
\left[V^{\top}
E[(\Delta{\mathbf M}_{n+1})(\Delta{\mathbf M}_{n+1})^{\top}\,|\,{\mathcal F}_n]
V\right]_{h,j}
\ \stackrel{a.s}\longrightarrow\
(\mathbf{v}_{h}^{\top}\mathbf{v}_{j})Z_{\infty}(1-Z_{\infty})
\end{split}
\end{equation*}
and, moreover, we have
$$
\sum_{n}\frac{ E[\,|Y_n|^2] }{ c_n^2 }=\sum_n E[\,|Y_n|^2]r_n^4 t_n^4
=\sum_n r_n^4 O(n^{2\gamma})=\sum_n O(1/n^{2\gamma})<+\infty.
$$
In addition, as we have observed above, $|v_n|=t_n^2|p_{n,h}
p_{n,j}|=t_n^2 O(|p_{n}^*|^2)\to 0$ and, by
\eqref{affermazione2} in Lemma \ref{lemma-tecnico_2} and
\eqref{affermazione2-log} in Lemma \ref{lemma-tecnico_3}, we have
\begin{equation*}
\lim_n v_n\sum_{k=m_0}^n \frac{1}{c_kv_k}=
\begin{cases}
\frac{c}{\alpha_h+\alpha_j }\;&\mbox{if } 1/2<\gamma<1
\\[3pt]
\frac{c^2}{c(\alpha_h+\alpha_j)-1}\;&\mbox{if } \gamma=1,\ c(a_h+a_j)>1
\\[3pt]
0\;&\mbox{if } \gamma=1,\; c(a_h+a_j)=1,\; c(\alpha_h+\alpha_j)\neq 1
\;\mbox{and \eqref{eq:condition_r_n_bis_main} holds}
\\[3pt]
c^2\;&\mbox{if } \gamma=1,\; c(\alpha_h+\alpha_j)=1\;
\mbox{and \eqref{eq:condition_r_n_bis_main} holds}.
\end{cases}
\end{equation*}

Finally, we have $|v_n|\sum_{k=m_0}^n\frac{1}{c_k |v_k|}=O(1)$ by
\eqref{affermazione3} in Lemma \ref{lemma-tecnico_2} and
\eqref{affermazione3-log} in Lemma \ref{lemma-tecnico_3} (with $u=1$),
and, using \eqref{conti} and \eqref{conti_log} in Appendix, we get
\begin{equation*}
c_n |v_n| \left|\frac{1}{v_n}-\frac{1}{v_{n-1}}\right|
=
\frac{1}{r_n^2\, |\ell_{n,h}\ell_{n,j}|}
\left|
\frac{\ell_{n,h}\ell_{n,j}}{t_n^2}-\frac{\ell_{n-1,h}\ell_{n-1,j}}{t_{n-1}^2}
\right|
=O(1).
\end{equation*}
Hence, recalling Remark \ref{rem-serie-rv}, also the last condition
required in Lemma \ref{lemma-serie-rv} is verified.
\\

Regarding condition $(c_3)$, we observe that, using the inequalities
$$
|{\mathbf T}_{n,k}|=r_k |C_{k+1,n}\Delta{\mathbf M}_{k+1}|
\leq
r_k |U|\,|A_{k+1,n}|\,|V^{\top}|\,|\Delta{\mathbf M}_{k+1}|
\leq
K r_k |A_{k+1,n}|,
$$
with a suitable constant $K$, we find for any $u>1$
\begin{equation*}
\begin{split}
\left(\sup_{m_0\leq k\leq n}|t_n\mathbf{T}_{n,k}|\right)^{2u}
&\leq
t_n^{2u} \sum_{k=m_0}^{n-1} |\mathbf{T}_{n,k}|^{2u}
+ t_n^{2u} |\mathbf{T}_{n,n}|^{2u}
=
t_n^{2u} O\left(
|p_{n}^*|^{2u}\sum_{k=m_0}^{n-1} r_k^{2u}\, |\ell_{k}^*|^{2u}
\right)
\!+t_n^{2u} O(r_n^{2u}),
\end{split}
\end{equation*}
where, for the last equality, we have used \eqref{affermazione1_bis}
and \eqref{affermazione1_bis-ell} in Lemma \ref{lemma-tecnico_1}. Now,
by \eqref{affermazione3} in Lemma \ref{lemma-tecnico_2} and
\eqref{affermazione3-log} in Lemma \ref{lemma-tecnico_3} (with
$\alpha_1=\alpha_2=\alpha^*=1-\lambda^*$ and $u>1$), we have
\begin{equation*}
\begin{split}
O\left(
|p_{n}^*|^{2u}\sum_{k=m_0}^{n-1} r_k^{2u}\, |\ell_{k}^*|^{2u}
\right)
=
\begin{cases}
O(\,n^{-\gamma(2u-1)}\,)\quad &\mbox{if } 1/2<\gamma<1\\
O(\,n^{-(2u-1)}\,)\quad &\mbox{if }\gamma=1,\ 2uca^*>2u-1\\
O(\,n^{-u}\,)\quad &\mbox{if } \gamma=1, \ 2ca^*=1\
\mbox{and \eqref{eq:condition_r_n_bis_main} holds}.
\end{cases}
\end{split}
\end{equation*}
Therefore, for cases (a) and (c), it is immediate to obtain
$$
t_n^{2u} O\left(
|p_{n}^*|^{2u}\sum_{k=m_0}^{n-1} r_k^{2u}\, |\ell_{k}^*|^{2u}
\right)
+t_n^{2u} O(r_n^{2u})\longrightarrow 0
$$ for any $u>1$; while in case (b) in order to have the above
convergence to zero, we have to choose $u>1$ such that $2uca^*>2u-1$,
i.e. $2u(ca^*-1)+1>0$. This choice is always possible: indeed, or
$ca^*-1\geq 0$ and so we can take any $u>1$, or $ca^*-1<0$ and we have
to take $u\in (1, (2-2ca^*)^{-1})$ (note that $(2-2ca^*)^{-1}>1$ since
$2ca^*>1$ by assumption). As a consequence of the above convergence to
zero, we obtain condition (c3) of Theorem~\ref{thm:triangular}.  \\

Summing up, all the conditions required by
Theorem~\ref{thm:triangular} are satisfied and so we can apply this
theorem and obtain the stable convergence of $t_n\sum_{k=m_0}^n
\mathbf{T}_{n,k}$ to the Gaussian kernel with random variance defined
in Theorem~\ref{thm:asymptotics_Z_hat} for each of the three cases.
\end{proof}

\noindent {\it Proof of Theorem \ref{thm:real_positive_variances}.}
\indent First, consider case (a), i.e. $1/2<\gamma<1$, and recall the
definition of $\widehat{S}_{\gamma}$ in~\eqref{def:sigmahat_gamma}.
Then, since $l_j:=(1-\lambda_j)$, for $\lambda_j\in Sp(W)\setminus
\{1\}$, have positive real parts by the assumptions on~$W$, we have
\begin{equation*}
[\widehat{S}_{\gamma}]_{h,j}\ =\ (\mathbf{v}_{h}^{\top}\mathbf{v}_{j})
\frac{c}{l_h+l_j}\
=\ (\mathbf{v}_{h}^{\top}\mathbf{v}_{j})c\int_{0}^{\infty}\exp[-u(l_h+l_j)]du.
\end{equation*}
Then, setting $L:=(I-D)$ and $M(u):=U\exp(-uL)V^{\top}$ for
$u\in(0,+\infty)$, we can write
$$\widehat{\Sigma}_{\gamma}\ =\ U\widehat{S}_{\gamma}U^{\top}
\ =\ c\int_{0}^{\infty}M(u)M^{\top}(u)du.$$
Notice that, for any $u\in(0,+\infty)$, the matrix $M(u)$ has real
entries since, by \eqref{eq:relazioni-2} and \eqref{eq:decomposition-matrix},
we have
$$
M(u)\ =\ U\sum_{k=0}^{+\infty}\frac{(-uL)^k}{k!}V^{\top}\ =\
UV^{\top}+\sum_{k=1}^{+\infty}\frac{(-uULV^{\top})^k}{k!}\ =\
\exp[-u(I-W^{\top})]-\mathbf{u}_1\mathbf{v}_1^{\top}.
$$ Moreover, for any $u\in(0,+\infty)$, the matrix $M(u)$ has rank
$(N-1)$ and $M^{\top}(u)\mathbf{v}_1=\mathbf{0}$ by
\eqref{eq:relazioni-2}. Therefore, for any $u\in(0,+\infty)$, the
matrix~$M(u)M^{\top}(u)$ is a positive semi-definite real matrix with
rank~$(N-1)$ and $M(u)M^{\top}(u)\mathbf{v}_1=\mathbf{0}$ (see
\cite[Observation 7.1.8]{horn_johnson}).  These facts imply the first
part of statement~(a).  For the second part, denoting by
$\mathbf{e}_j$ the vector such that $e_{j,j}=1$ and $e_{j,h}=0$ for
all $h\neq j$, we have for $j\neq k$
$$\Sigma_{\gamma,j,k}=
(\mathbf{e}_j-\mathbf{e}_k)^{\top}\widehat{\Sigma}_{\gamma}
(\mathbf{e}_j-\mathbf{e}_k)
$$
and so $\Sigma_{\gamma,j,k}=0$ if and only if
$M(u)^{\top}(\mathbf{e}_j-\mathbf{e}_k)=0$ for almost every $u\in
(0,+\infty)$. But this is not possible since $Ker(M(u)^{\top})$ is generated
by $\mathbf{v}_1$, which has all the entries strictly greater than
zero as pointed out in Section \ref{section_model}. This concludes the
proof of case (a).
\\

\indent The proof of case (b) is analogous, by setting
$l_j:=(1-\lambda_j-(2c)^{-1})$ for $\lambda_j\in Sp(W)\setminus
\{1\}$, which have positive real parts by condition ${\mathcal
  Re}(\lambda^{*})<1-(2c)^{-1}$, and $L:=(I-D-I(2c)^{-1})$.
\\

\indent For the proof of case (c), i.e. $\gamma=1$ and
$\mathcal{R}e(\lambda^*)=1-(2c)^{-1}$, let $1\leq q\leq (N-1)$ be the
number of distinct eigenvalues $\lambda_j=a_j+ib_j\in
Sp(W)\setminus\{1\}$ and, for any $1\leq h\leq q$, let
$U_h$ and $V_h$ be the sub-matrices of $U$ and
$V$ whose columns are, respectively, the left and the right eigenvectors
associated to $\lambda_h$. Then, by the
definition of $\widehat{S}_{1}^*$ in~\eqref{def:sigmahat1_star}, we
have
$$
\widehat{\Sigma}_1^*\ =\
U\widehat{S}_1^*U^{\top}\ =\
\sum_{1\leq h,j\leq q}
U_hV_h^{\top}V_jU_j^{\top}
\mathbbm{1}_{\{\lambda_h+\lambda_j=2-c^{-1}\}}.
$$
Then, since
$$\{\lambda_h+\lambda_j=2-c^{-1}\}\ =\
\left(\{a_h=1-(2c)^{-1}\}\cap\{a_j=1-(2c)^{-1}\}\cap\{b_h=-b_j\}\right),
$$ setting the $N\times N$-matrix $M_h:=U_hV_h^{\top}$ for any $1\leq
h\leq q$ and denoting by $1\leq p\leq q$ the number of distinct
eigenvalues $\lambda_j\in A^*$, we can write
$$ \sum_{1\leq h,j\leq
  q}M_hM_j^{\top}\mathbbm{1}_{\{\lambda_h+\lambda_j=2-c^{-1}\}}\ =\ \sum_{1\leq
  h,j\leq p}M_hM_j^{\top}\mathbbm{1}_{\{b_h=-b_j\}}\ =\ \sum_{1\leq
  h\leq p}M_hM_{j(h)}^{\top}\ ,$$ where $j(h)$ indicates the index
$1\leq j\leq p$ such that $b_j=-b_h$.  Notice that, since $W$ has real
entries, for any non-real $\lambda_h\in Sp(W)$, there exists
$\lambda_j\in Sp(W)$ such that $\lambda_j=\overline{\lambda}_h$;
moreover, $\overline{\mathbf{u}}_h$ and $\overline{\mathbf{v}}_h$ are
respectively left and right eigenvectors associated to $\lambda_j$.
Hence, denoting by $T$ the non-singular matrix such that
$U_j=\overline{U}_hT$ and $V_j=\overline{V}_h(T^{\top})^{-1}$, we have
that
$$M_j\ =\ U_{j}V_{j}^{\top}\ =\ \overline{U}_hTT^{-1}\overline{V}_h^{\top}\ =\
\overline{U}_h\overline{V}_h^{\top}\ =\ \overline{M}_h.$$
Thus, we have
$$\sum_{1\leq h\leq p}M_hM_{j(h)}^{\top}\ =\ \sum_{1\leq h\leq
  p}M_h\overline{M}_h^{\top},$$ which is a positive semi-definite
matrix of rank $m^*$ (see \cite[Observation 7.1.8]{horn_johnson}).

Concerning the second part of case (c), since
$$\Sigma_{1,j,k}^*\ = \ (\mathbf{e}_j-\mathbf{e}_k)^{\top}
\widehat{\Sigma}_{1}^*(\mathbf{e}_j-\mathbf{e}_k),$$ we have that
$\Sigma_{1,j,k}^*=0$ if and only if $(\mathbf{e}_j-\mathbf{e}_k)\in
Ker(\widehat{\Sigma}_1^*)$.  Now, notice that
$Ker(\widehat{\Sigma}_1^*)=\bigcap_{1\leq h\leq p} Ker(U_h^{\top})$, and
hence $Ker(\widehat{\Sigma}_1^*)$ is generated by
$\{\mathbf{v}_j,j:\lambda_j\notin A^*\}$.  Finally, since the
following decomposition holds
$$(\mathbf{e}_j-\mathbf{e}_k)\ =\ \sum_{h=1}^N(\mathbf{u}_h^{\top}
(\mathbf{e}_j-\mathbf{e}_k))\mathbf{v}_h\ =\
\sum_{h=1}^N(u_{h,j}-u_{h,k})\mathbf{v}_h,$$
it is enough to have $u_{h,j}\neq u_{h,k}$ for some $h$ such that
$\lambda_h\in A^*$ to prove that $\Sigma_{1,j,k}^*>0$.  Analogously,
we can prove that $[\widehat{\Sigma}_1^*]_{jj}>0$ when $u_{h,j}\neq 0$
for some $h$ such that $\lambda_h\in A^{*}$.  This concludes the proof
of case (c).
\qed

%%%%%%%%%%%%%%%%%%%%%%%%%%%%%%%%%%%%%%%%%%%%%%%%%%%%%%%%%%%%%%%%%%%%%%%
\subsection{Proofs of Theorem \ref{thm:asymptotics_Z} and Theorem
\ref{thm:asymptotics_Z_j_Z_k} (CLTs for ${\mathbf Z}_n$)}
\label{subsection_proof_Z_n}
%%%%%%%%%%%%%%%%%%%%%%%%%%%%%%%%%%%%%%%%%%%%%%%%%%%%%%%%%%%%%%%%%%%%%%%

Let us remind the decomposition~\eqref{eq:decomposition_Z}, i.e.
$$
\mathbf{Z}_{n}=
\widetilde{Z}_n\mathbf{1}+\widehat{\mathbf{Z}}_{n}.
$$
Hence, the asymptotic behavior of the process $(\mathbf{Z}_{n})_n$ can
be obtained by combining the asymptotic results concerning
$(\widetilde{Z}_{n})_n$ and $(\widehat{\mathbf{Z}}_{n})_n$ established
in the previous subsections. As we have already seen, we have the almost sure
synchronization, i.e.
$$
{\mathbf Z}_n\stackrel{a.s.}\longrightarrow Z_\infty {\mathbf 1}.
$$
Moreover, from Theorem \ref{thm:asymptotics_Z_tilde}, we easily obtain
for $1/2<\gamma\leq 1$
\begin{equation*}
n^{\gamma-\frac{1}{2}}
\left(\widetilde{Z}_{n}-Z_{\infty}\right)\,{\mathbf 1}\
{\longrightarrow}\
\mathcal{N}
\left(
\ 0\ ,\
\widetilde{\sigma}_{\gamma}^2\,Z_{\infty}(1-Z_{\infty}){\mathbf 1}{\mathbf 1}^{\top}\
\right),\
\ \ \ \mbox{stably in the strong sense},
\end{equation*}
and we recall the central limit theorem for the multi-dimensional
process $\widehat{\mathbf{Z}}_{n}$ presented in
Theorem~\ref{thm:asymptotics_Z_hat}.\\

Hence, for Theorem \ref{thm:asymptotics_Z}(a), we observe that
$$
n^{\gamma - 1/2}({\mathbf Z}_n-Z_{\infty}{\mathbf 1})\ =\
n^{\gamma - 1/2}(\widetilde{Z}_n-Z_{\infty}){\mathbf 1}\ +\
\frac{1}{n^{(1-\gamma)/2}}(n^{\gamma/2}\,\widehat{\mathbf Z}_n\,),
$$
where the first term converges stably to a Gaussian kernel and the
second one converges in probability to zero.
\\

For Theorem \ref{thm:asymptotics_Z}(b), we observe that
$$
\sqrt{n}({\mathbf Z}_n-Z_{\infty}{\mathbf 1})\ =\
\sqrt{n}(\widetilde{Z}_n-Z_{\infty}){\mathbf 1}\ +\
\sqrt{n}\,\widehat{\mathbf Z}_n\,,
$$ where the first term converges to a Gaussian kernel stable in the
strong sense and the second one converges stably to a Gaussian
kernel. Since $\widehat{\mathbf{Z}}_{n}$ is
$\mathcal{F}_n$-measurable, by applying Theorem \ref{blocco}, we
can conclude.  \\

For Theorem \ref{thm:asymptotics_Z}(c), we observe that
$$
\frac{\sqrt{n}}{\sqrt{\ln(n)}}({\mathbf Z}_n-Z_{\infty}{\mathbf 1})\ =\
\left(\frac{1}{\sqrt{\ln(n)}}\right)\sqrt{n}(\widetilde{Z}_n-Z_{\infty})
{\mathbf 1}\ +\
\frac{\sqrt{n}}{\sqrt{\ln(n)}}\,\widehat{\mathbf Z}_n\,,
$$ where the first term converges in probability to zero and the
second one converges stably to a Gaussian kernel. Thus Theorem
\ref{thm:asymptotics_Z} is proven.  \\

Finally, we observe that
$$
Z_{n,j}-Z_{n,k}=\widehat{Z}_{n,j}-\widehat{Z}_{n,k}.
$$
Therefore, Theorem \ref{thm:asymptotics_Z_j_Z_k} immediately
follows from the central limit theorem for the $N$-dimensional process
$(\widehat{\mathbf Z}_n)_n$.

%%%%%%%%%%%%%%%%%%%%%%%%%%%%%%%%%%%%%%%%%%%%%%%%%%%%%%%%%%%%%%%%%%%%%%%%%%%%
\section{Examples of Weighted Adjacency Matrices}\label{section_examples}
%%%%%%%%%%%%%%%%%%%%%%%%%%%%%%%%%%%%%%%%%%%%%%%%%%%%%%%%%%%%%%%%%%%%%%%%%%%%

In this section, we analyze in detail the results presented in
Section~\ref{section_asymptotic_results} for some interesting examples
of weighted adjacency matrices.

%%%%%%%%%%%%%%%%%%%%%%%%%%%%%%%%%%%%%%%%%%%%%%%%%%
\subsection{``Mean-field'' interaction}\label{subsection_medione}
%%%%%%%%%%%%%%%%%%%%%%%%%%%%%%%%%%%%%%%%%%%%%%%%%%

This kind of interaction can be expressed in terms of a particular
weighted adjacency matrix $W$ as follows: for any $1\leq j, k\leq N$
\begin{equation}\label{def:W_crimaldi}
w_{j,k}\ =\ \frac{\alpha}{N}\ +\ \delta_{j,k}(1-\alpha)
\qquad\mbox{with } \alpha\in [0,1],
\end{equation}
where $\delta_{j,k}$ is equal to $1$ when $j=k$ and to $0$
otherwise. Note that $W$ in~\eqref{def:W_crimaldi} is irreducible for
$\alpha>0$. Since $W$ is doubly stochastic, we have (see
Remark~\ref{rem:Z_tilde_doubly_stoch}) ${\mathbf v}_1={\mathbf
  u}_1=N^{-1/2}\mathbf{1}$ and so (i) the random variable
$\widetilde{Z}_{n}$ coincides with the average of the processes
$Z_{n,k}$, i.e. $N^{-1}\mathbf{1}^{\top}\mathbf{Z}_{n}$, (ii)
$\widehat{\mathbf{Z}}_{n}=
\left(I-N^{-1}\mathbf{1}\mathbf{1}^{\top}\right)\mathbf{Z}_n$ and
(iii) $\widetilde{\sigma}_{\gamma}^2=\frac{c^2}{N(2\gamma-1)}$ for
$1/2<\gamma\leq 1$.  Furthermore, we have $\lambda_j=1-\alpha$ for all
$\lambda_j\in Sp(W)\setminus\{1\}$ and, consequently, the conditions
${\mathcal R}e(\lambda^*)<1-(2c)^{-1}$ or ${\mathcal
  R}e(\lambda^*)=1-(2c)^{-1}$ required in the previous results when
$\gamma=1$ correspond to the conditions $2c\alpha>1$ or $2c\alpha=1$.
Finally, since $W$ is also symmetric, we have $U=V$ and so
$U^{\top}U=V^{\top}V=I$ and
$UU^{\top}=VV^{\top}=I-N^{-1}\mathbf{1}\mathbf{1}^{\top}$. We thus
obtain:
\begin{itemize}
\item[(a)] for $1/2<\gamma<1$, $\widehat{S}_\gamma=\frac{c}{2\alpha}I$
  and
  $\widehat{\Sigma}_{\gamma}=
\frac{c}{2\alpha}(I-N^{-1}\mathbf{1}\mathbf{1}^{\top})$;

\item[(b)] for $\gamma=1$ and $2c\alpha>1$,
  $\widehat{S}_1=\frac{c^2}{2c\alpha-1}I$ and
  $\widehat{\Sigma}_{1}=\frac{c^2}{2c\alpha-1}
(I-N^{-1}\mathbf{1}\mathbf{1}^{\top})$;

\item[(c)] for $\gamma=1$ and $2c\alpha=1$, $\widehat{S}_1^*=c^2 I$ and
$\widehat{\Sigma}^*_{1}=c^2(I-N^{-1}\mathbf{1}\mathbf{1}^{\top})$.
\end{itemize}
Therefore, our theorems contain as particular cases part of the
results proven in \cite{cri-dai-lou-min, cri-dai-min, dai-lou-min}.
However, differently from these papers, we do not deal with the cases
$2c\alpha < 1$ or $\gamma\leq 1/2$, which are still interesting for
synchronization phenomena but lead to quite different asymptotic
results. We have already discussed the case $\gamma\leq 1/2$ in
Section \ref{section_model} and, regarding the case $\gamma=1$ and $0<
2c\alpha < 1$, we recall that in \cite{cri-dai-min} it has been
determined the rate of synchronization, but not the asymptotic
distribution.

%%%%%%%%%%%%%%%%%%%%%%%%%%%%%%%%%%%%%%%%%%%%%%%%%%
\subsection{``Cycle'' interaction}\label{subsection_ciclo}
%%%%%%%%%%%%%%%%%%%%%%%%%%%%%%%%%%%%%%%%%%%%%%%%%%

Another possible scenario consists in a graph in which the vertices
forms a circle and each one influences only the vertex at his right
side.  This interaction can be modeled by using the adjacency matrix
$W$ defined as follows: for any $1\leq j\leq (N-1)$ and $1\leq k\leq
N$ we have
\begin{equation}\label{def:W_ciclo}
w_{j,k}\ =\ \left\{
\begin{aligned}
&1 \qquad\qquad\mbox{ if } k=j+1, \\
&0 \qquad\qquad\mbox{ otherwise},
\end{aligned}
\right.
\end{equation}
while, for $j=N$, we have $w_{N,1}=1$ and $w_{N,k}=0$ for any $2\leq k
\leq N$. Since $W$ is again doubly stochastic, we have
$\mathbf{u}_1=\mathbf{v}_1=N^{-1/2}\mathbf{1}$, which implies (i)
$\widetilde{Z}_{n}=N^{-1}\mathbf{1}^{\top}\mathbf{Z}_{n}$, (ii)
$\widehat{\mathbf{Z}}_{n}=\left(I-N^{-1}\mathbf{1}\mathbf{1}^{\top}\right)
\mathbf{Z}_n$ and (iii)
$\widetilde{\sigma}_{\gamma}^2=\frac{c^2}{N(2\gamma-1)}$ for
$1/2<\gamma\leq 1$ as in
Subsection~\ref{subsection_medione}. Moreover, it is easy to verify
that in this case the eigenvalues of $W$ are $\lambda_1=1$ and
$\lambda_j=\exp[i(j-1)2\pi/N]$, for $j=2,..,N$.  Hence, since in this
case $\mathcal{R}e(\lambda^*)=\cos(2\pi/N)$, conditions ${\mathcal
  R}e(\lambda^*)<1-(2c)^{-1}$ or ${\mathcal
  R}e(\lambda^*)=1-(2c)^{-1}$ required in the previous results when
$\gamma=1$ correspond to the conditions $2c(1-\cos(2\pi/N))>1$ or
$2c(1-\cos(2\pi/N))=1$.  Moreover, for each $\lambda_j\in
Sp(W)\setminus\{1\}$, the $k^{th}$ element of the corresponding left
and right eigenvectors are, respectively,
$u_{j,k}=N^{-1/2}\exp[-i(j-1)k2\pi/N]$ and
$v_{j,k}=N^{-1/2}\exp[i(j-1)k2\pi/N]$. Therefore, since we have the
analytic expressions of $U$ and $V$, it is possible to compute the
asymptotic variance-covariance matrices according to the size $N$ of
the network and their eigenvalues and eigenvectors.  For instance, for
$N=4$ we have:

\begin{itemize}
\item[(a)] for $1/2<\gamma<1$,
the non-zero eigenvalues of
$\widehat{\Sigma}_{\gamma}$ are $c/2$, $c/2$, $c/4$,
with the corresponding eigenvectors $(-1,0,1,0)$, $(0,-1,0,1)$, $(-1,1,-1,1)$;

\item[(b)] for $\gamma=1$ and $c>1/2$,
the non-zero eigenvalues of
$\widehat{\Sigma}_1$ are $c^2(2c-1)^{-1}$, $c^2(2c-1)^{-1}$, $c^2(4c-1)^{-1}$,
with the corresponding eigenvectors $(-1,0,1,0)$, $(0,-1,0,1)$, $(-1,1,-1,1)$;

\item[(c)] for $\gamma=1$ and $c=1/2$,
the non-zero eigenvalue of
$\widehat{\Sigma}_1^*$ is $1/4$ with multiplicity two and
the corresponding eigenvectors are $(-1,0,1,0)$, $(0,-1,0,1)$.
\end{itemize}

%%%%%%%%%%%%%%%%%%%%%%%%%%%%%%%%%%%%%%%%%%%%%%%%%%%%%%%%%%%%%%%%%%%%%%%%%%%%%%
\subsection{``Special vertex'' case}\label{subsection_caso_non_bistocastico}
%%%%%%%%%%%%%%%%%%%%%%%%%%%%%%%%%%%%%%%%%%%%%%%%%%%%%%%%%%%%%%%%%%%%%%%%%%%%%%

In the previous two examples, the matrix $W$ is doubly stochastic
(also symmetric in the first example).  As a different situation, we
may consider the case in which there exists a ``special vertex'' whose
influence on the graph is different with respect to the one of all the
other elements in the system.  This interactive structure can be
expressed in terms of a particular adjacency matrix defined as follows:
\begin{equation}\label{def:W_caso_non_bistocastico}
W=\mathbf{a}_p\mathbf{1}^{\top},\qquad\mbox{ with }\qquad
\mathbf{a}_p\ :=\ \left(\ p\ ,\ \frac{1-p}{N-1}\ ,\ ...\ ,\ \frac{1-p}{N-1}\
\right)^{\top},
\end{equation}
where $0<p<1$ is a weight that represents how much any vertex of
the system is influenced by the ``special vertex''.  Notice that
$\sum_{i=1}^Na_{p,i}=1$ for any $0<p<1$. Moreover, we have
$\mathbf{v}_1=\mathbf{a}_pN^{1/2}$ and hence
$UV^{\top}=I-\mathbf{u}_1\mathbf{v}_1^{\top}=I-\mathbf{1}\mathbf{a}_p^{\top}$ and
\begin{equation*}
\widetilde{\sigma}_{\gamma}^2=
\frac{c^2}{N}\, \frac{\|\mathbf{v}_1\|^2}{(2\gamma-1)}\ =\
\frac{c^2}{(2\gamma-1)}\|\mathbf{a}_p\|^2
\ =\ \frac{c^2}{(2\gamma-1)}\left(p^2+\frac{(1-p)^2}{N-1}\right)
\qquad\mbox{for } 1/2<\gamma\leq 1.
\end{equation*}
Furthermore, since $Sp(W)\setminus\{1\}=0$ with multiplicity $(N-1)$,
conditions $\lambda^{*}<1-(2c)^{-1}$ or $\lambda^{*}=1-(2c)^{-1}$ required in the
previous results when $\gamma=1$ correspond to the conditions $c>1/2$
or $c=1/2$ and, setting
$$A_p\ :=\ UV^{\top}(UV^{\top})^{\top}\ =\ (I-\mathbf{1}\mathbf{a}_p^{\top})
(I-\mathbf{a}_p\mathbf{1}^{\top})
\ =\
I+
\|\mathbf{a}_p\|^2\mathbf{1}\mathbf{1}^{\top}
-(\mathbf{1}\mathbf{a}_p^{\top}+\mathbf{a}_p \mathbf{1}^{\top}),
$$
we have
\begin{itemize}
\item[(a)] $\widehat{\Sigma}_{\gamma}=\frac{c}{2}A_p$ for $1/2<\gamma<1$;

\item[(b)] $\widehat{\Sigma}_1=\frac{c^2}{2c-1}A_p$, for $\gamma=1$ and $c>1/2$;

\item[(c)] $\widehat{\Sigma}_1^*=\frac{1}{4} A_p$, for $\gamma=1$ and $c=1/2$.
\end{itemize}

\indent In order to highlight the role of the ``special vertex'' in
the synchronization of the system, let us set the initial state of the
stochastic processes at the vertices as follows: $Z^1_0=z_1$ for the
``special vertex'' and $Z^2_0=\dots=Z^N_0=z_2$ for the other vertices
of the graph, with $z_1\neq z_2$.  This may represent a situation in
which initially a ``special subject'' has an inclination $z_1$ that is
different from the rest of the population which is setted on another
inclination $z_2$.  Since $(\widetilde{Z}_n)_n$ is a martingale, we
have that $E[Z_{\infty}]=E[\widetilde{Z}_0]=N^{-1/2}\mathbf{v}_1^{\top}
\mathbf{Z}_0$, which in this case reduces to
$$ E[Z_{\infty}]=z_1p+z_2(1-p).$$ Then, the expected limiting
inclination, i.e. $E[Z_{\infty}]$, is strongly related to the
influence that the ``special vertex'' exercises on the rest of the
vertices (which is ruled by the parameter $p$).  For instance,
consider the following cases:
\begin{itemize}
\item[(i)] If $p\simeq 1$, then we have $E[Z_{\infty}]\simeq z_1$
  regardless the value of $z_2$; this reflects a situation in which
  the ``special vertex'' is very charismatic in the system and he
  leads the other elements to synchronize on average towards his
  initial inclination.
\item[(ii)] If $p=1/N$ with $N$ large , then we have
  $E[Z_{\infty}]\simeq z_2$ regardless the value of $z_1$; this
  reflects a situation in which the ``diversity'' of the ``special
  vertex'' is dispersed because of the large number of individuals in the
  population, and so the expected limiting inclination is close to the
  initial inclination of the majority of the system.
\end{itemize}

%%%%%%%%%%%%%%%%%%%%%%%%%%%%%%%%%%%%%%%%%%%%%%%%%%%%%%%%%%%%%%%%%%%%%%%%%%%%%%
\section{Statistical Inference}
\label{section_applications}
%%%%%%%%%%%%%%%%%%%%%%%%%%%%%%%%%%%%%%%%%%%%%%%%%%%%%%%%%%%%%%%%%%%%%%%%%%%%%%

First of all, we observe that by means of the central limit theorem
for $\widetilde{Z}_{n}=N^{-1/2}\,\mathbf{v}_1^{\top}\,\mathbf{Z}_{n}$
presented in Theorem~\ref{thm:asymptotics_Z_tilde}, it is possible to
construct asymptotic confidence intervals for $Z_{\infty}$, i.e. the
limit random variable at which all the stochastic processes
$\{(Z_{n,j})_n:\, 1\leq j\leq N\}$ converge.
Specifically, an asymptotic confidence interval for $Z_{\infty}$ with approximate level
$(1-\theta)$ is the following:
\begin{equation}\label{eq:confidence_interval_Z_inf}
CI_{1-\theta}(Z_{\infty})\ :=\
\left(\
\widetilde{Z}_{n}-
\widetilde{\sigma}_{\gamma}\sqrt{\widetilde{Z}_n(1-\widetilde{Z}_n)}
n^{-(\gamma-1/2)}
z_{\theta}\ ;\
\widetilde{Z}_{n}+
\widetilde{\sigma}_{\gamma}\sqrt{\widetilde{Z}_n(1-\widetilde{Z}_n)}
n^{-(\gamma-1/2)}
z_{\theta}\ \right)
\end{equation}
where $z_\theta$ is such that
${\mathcal N}(0,1)(z_\theta,+\infty)=\theta/2$.  \\ Note that, in order to
compute the above confidence interval, we need to know $\mathbf{v}_1$
(as well as $N$, $c$ and $\gamma$). Nevertheless, it is not required
to know the whole weighted adjacency matrix $W$. For example, for
doubly stochastic matrices, the vector $\mathbf{v}_1$ is known (see
Remark~\ref{rem:Z_tilde_doubly_stoch}).\\

\indent We now focus on the inferential problem of testing the
hypothesis that the network is characterized by a given weighted
adjacency matrix $W_0$, i.e. $H_0:\,W=W_0$, using the
multi-dimensional stochastic process $(\mathbf{Z}_n)_n$ observed at
the vertices. Since the distribution of $Z_{\infty}$ is unknown, we
propose a test statistics whose limit does not involve
$Z_{\infty}$. The parameters $N$, $c$ and $\gamma$ are again
considered known.  \\

\indent First, we need to introduce some notation.  Given
a $N\times N$ positive semi-definite matrix $\Sigma$ of rank $1\leq
r\leq(N-1)$ and having spectral decomposition $\Sigma=O\Lambda O^{\top}$
(more precisely, $\Lambda$ is the diagonal matrix containing the
eigenvalues of $\Sigma$ and the columns of $O$ form a corresponding
orthonormal basis of right eigenvectors), we denote by $L$ the
diagonal matrix such that
\[[L]_{ij}\ =\ \left\{
\begin{aligned}
&\lambda^{-1/2}_{j}\ &&\mbox{if } i=j\mbox{ and }\lambda_{j}>0,\\
&0\ &&otherwise,
\end{aligned}
\right.\]
and by $H$ the $r\times N$-matrix such that
\[[H]_{ij}\ =\ \left\{
\begin{aligned}
&1\ &&\mbox{if } i=j\mbox{ and }1\leq i\leq r,\\
&0\ &&otherwise.
\end{aligned}
\right.\]
Then:
\begin{itemize}
\item[(a)] when $1/2<\gamma<1$, take
  $\Sigma=\widehat{\Sigma}_{\gamma}$ with rank $r=(N-1)$ and set
  $O_{\gamma}=O$, $L_{\gamma}=L$, $H_{\gamma}=H$ and
  $M_{\gamma}=H_{\gamma}L_{\gamma}O_{\gamma}^{\top}$;

\item[(b)] when $\gamma=1$ and $\lambda^{*}<1-(2c)^{-1}$, take
  $\Sigma=\widehat{\Sigma}_1$ with rank $r=(N-1)$ and set
  $O_{1}=O$, $L_{1}=L$, $H_{1}=H$ and $M_1=H_{1}L_{1}O_{1}^{\top}$;

\item[(c)] when $\gamma=1$ and $\lambda^{*}=1-(2c)^{-1}$, take
  $\Sigma=\widehat{\Sigma}_1^*$ with rank $r$ equal to the cardinality
  $m^*$ of the set
 $$ A^{*}\ =\ \left\{\ \lambda_j\in Sp(W)\ :\ {\mathcal
  Re}(\lambda_j)=1-(2c)^{-1}\ \right\},$$ defined
  in~\eqref{def:A_star} and set $O_1^*=O$, $L_1^*=L$, $H_1^*=H$ and
  $M_1^*=H_{1}^*L_{1}^*(O_{1}^*)^{\top}$.
\end{itemize}

\indent Fixed the weighted adjacency matrix assumed under $H_0$,
i.e. $W_0$, we can compute $\mathbf{v_1}$, $V$ and $U$ as defined in
Section~\ref{section_model}. Hence, we can obtain under $H_0$ the real
process
$\widetilde{Z}_{n}=N^{-1/2}\,(\mathbf{v}_1^{\top}\,\mathbf{Z}_{n})$ and
the multi-dimensional process
$\widehat{\mathbf{Z}}_{n}=(I-N^{-1/2}\mathbf{1}\mathbf{v}_1^{\top})\mathbf{Z}_{n}=
U\,V^{\top}\,\mathbf{Z}_{n}$.
Then, using \eqref{eq:conv_Z_tilde}, \eqref{eq:CLT_Z_hat_gamma},
\eqref{eq:CLT_Z_hat_less} and applying
Lemma~\ref{prop:gaussian_vector}, we have under $H_0$ that
\begin{itemize}
\item[(a)] for $1/2<\gamma<1$,
\begin{equation}\label{def:test_statistics_a}
\mathbf{T}_{\gamma,n}:=
n^{\gamma/2}\left[\widetilde{Z}_n(1-\widetilde{Z}_n)\right]^{-1/2}
M_{\gamma}\,\widehat{\mathbf{Z}}_n,
\end{equation}

\item[(b)] for $\gamma=1$ and $\lambda^{*}<1-(2c)^{-1}$,
\begin{equation}\label{def:test_statistics_b}
\mathbf{T}_{1,n}:= n^{1/2}\left[\widetilde{Z}_n(1-\widetilde{Z}_n)\right]^{-1/2}
M_1\,\widehat{\mathbf{Z}}_n,
\end{equation}
\end{itemize}
are asympto\-ti\-cally normal whose covariance matrix is the
$(N-1)\times(N-1)$ identity matrix.  Hence, both the test statistics
$\|\mathbf{T}_{\gamma,n}\|^2$ and $\|\mathbf{T}_{1,n}\|^2$ are
asympto\-ti\-cally chi-squared distributed with $(N-1)$ degrees of
freedom.  In the case (c), i.e. $\gamma=1$ and
$\lambda^{*}=1-(2c)^{-1}$, using \eqref{eq:conv_Z_tilde},
\eqref{eq:CLT_Z_hat_equal} and applying
Lemma~\ref{prop:gaussian_vector}, we have under $H_0$ that
\begin{equation}\label{def:test_statistics_c}
\mathbf{T}_{1,n}^*:=
\sqrt{\frac{n}{\ln(n)}}\left[\widetilde{Z}_n(1-\widetilde{Z}_n)\right]^{-1/2}
M_1^*
\widehat{\mathbf{Z}}_n
\end{equation}
is asympto\-ti\-cally normal whose covariance matrix is the
$m^* \times m^*$ identity matrix and hence the test statistics
$\|\mathbf{T}_{1,n}^{*}\|^2$ is asympto\-ti\-cally chi-squared
distributed with $m^*$ degrees of freedom.  These results let us
construct asymptotic critical regions for testing any $W_0$. We now
apply these testing procedures to the meaningful examples of weighted
adjacency matrices considered in Section~\ref{section_examples}.

%%%%%%%%%%%%%%%%%%%%%%%%%%%%%%%%%%%%%%%%%%%%%%%%%%%%%%%%%%%%%%%%%%%%%%
\subsection{``Mean-field'' interaction}\label{subsection_test_medione}
%%%%%%%%%%%%%%%%%%%%%%%%%%%%%%%%%%%%%%%%%%%%%%%%%%%%%%%%%%%%%%%%%%%%%%

Consider the family of weighted adjacency matrices
$\{W_{\alpha};\alpha\in (0,1]\}$ defined in~\eqref{def:W_crimaldi}. It
  may be of interest to test whether the unknown parameter $\alpha$
  can be assumed to be equal to a specific value $\alpha_0\in (0,1]$,
    i.e.
$$
H_0:\, W=W_{\alpha_0}\qquad\mbox{ vs} \qquad
H_1:\, W=W_{\alpha}\ \mbox{for some }\alpha\in (0,1]\setminus\{\alpha_0\}.
$$
In this case, assuming $2c\alpha_0\geq1$ when $\gamma=1$, by the
  results presented in Subsection~\ref{subsection_medione}, using
  $\mathbf{v}_1$ and $U=V$ computed for $W_{\alpha_0}$, we have:
\begin{itemize}
\item[(a)] for $1/2<\gamma<1$,
$\mathbf{T}_{\gamma,n}=
n^{\gamma/2}\left[\widetilde{Z}_n(1-\widetilde{Z}_n)\right]^{-1/2}
\sqrt{\frac{2\alpha_0}{c}}\,U^{\top}\,\widehat{\mathbf{Z}}_n$;

\item[(b)] for $\gamma=1$ and $2c\alpha_0>1$,
$\mathbf{T}_{1,n}=n^{1/2}\left[\widetilde{Z}_n(1-\widetilde{Z}_n)\right]^{-1/2}
\frac{\sqrt{2c\alpha_0-1}}{c}\,U^{\top}\,\widehat{\mathbf{Z}}_n$;

\item[(c)] for $\gamma=1$ and $2c\alpha_0=1$,
  $\mathbf{T}_{1,n}^{*}=
\sqrt{\frac{n}{\ln(n)}}\left[\widetilde{Z}_n(1-\widetilde{Z}_n)\right]^{-1/2}
  \frac{1}{c}\,U^{\top}\,\widehat{\mathbf{Z}}_n$,
\end{itemize}
where $\widetilde{Z}_{n}=N^{-1}\mathbf{1}^{\top}\mathbf{Z}_{n}$ and
$\widehat{\mathbf{Z}}_{n}=
\left(I-N^{-1}\mathbf{1}\mathbf{1}^{\top}\right)\mathbf{Z}_n$. Under
$H_0$ we have
$\|\mathbf{T}_{\gamma,n}\|^2,\|\mathbf{T}_{1,n}\|^2,\|\mathbf{T}_{1,n}^*\|^2
\stackrel{d}\sim\chi^2_{N-1}$.  Concerning the distribution of the
test statistics for $\alpha\neq \alpha_0$, notice that the
eigenvectors of $W$ do not depend on $\alpha$ and so $U$ is the same
for any $\alpha$.  Therefore, for any fixed $\alpha\in
(0,1]\setminus\{\alpha_0\}$, under the hypothesis
  $\{W=W_{\alpha}\}\subset H_1$ we have
\begin{itemize}
\item[(a)] for $1/2<\gamma<1$, $\|\mathbf{T}_{\gamma,n}\|^2
\stackrel{d}\sim(\frac{\alpha_0}{\alpha})\chi^2_{N-1}$;

\item[(b)] for $\gamma=1$ and $2c\alpha_0>1$, $\|\mathbf{T}_{1,n}\|^2
\stackrel{d}\sim(\frac{2c\alpha_0-1}{2c\alpha-1})\chi^2_{N-1}$ if $2c\alpha>1$,
and $\|\mathbf{T}_{1,n}\|^2\stackrel{P}\rightarrow +\infty$ if $2c\alpha=1$;

\item[(c)] for $\gamma=1$ and $2c\alpha_0=1$, $\|\mathbf{T}_{1,n}^*\|^2
\stackrel{P}\rightarrow 0$ for $2c\alpha>1$.
\end{itemize}

%%%%%%%%%%%%%%%%%%%%%%%%%%%%%%%%%%%%%%%%%%%%%%%%%%
\subsection{``Cycle'' interaction}
\label{subsection_test_ciclo}
%%%%%%%%%%%%%%%%%%%%%%%%%%%%%%%%%%%%%%%%%%%%%%%%%%

We could test whether the weighted adjacency matrix is the one, say
$W_0$, defined in~\eqref{def:W_ciclo}. Then, we consider the following
hypothesis test:
$$
H_0:\, W=W_0\qquad\mbox{ vs} \qquad
H_1:\, W\neq W_0.
$$
Once obtained the eigen-structure of $\widehat{\Sigma}_{\gamma}$,
$\widehat{\Sigma}_1$ and $\widehat{\Sigma}_1^*$, we can define
$\mathbf{T}_{\gamma,n}$, $\mathbf{T}_{1,n}$ and $\mathbf{T}_{1,n}^*$
as in~\eqref{def:test_statistics_a},~\eqref{def:test_statistics_b}
and~\eqref{def:test_statistics_c}, respectively, and under $H_0$ we
have that
\begin{itemize}
\item[(a)] for $1/2<\gamma<1$,
$\|\mathbf{T}_{\gamma,n}\|^2\stackrel{d}\sim\chi^2_{N-1}$;

\item[(b)] for $\gamma=1$ and $2c(1-\cos(2\pi/N))>1$,
$\|\mathbf{T}_{1,n}\|^2\stackrel{d}\sim\chi^2_{N-1}$;

\item[(c)] for $\gamma=1$ and $2c(1-\cos(2\pi/N))=1$,
$\|\mathbf{T}_{1,n}^*\|^2\stackrel{d}\sim\chi^2_{2}$.
\end{itemize}

%%%%%%%%%%%%%%%%%%%%%%%%%%%%%%%%%%%%%%%%%%%%%%%%%%%%%%%%%%%%%%%%%%%%%%%%%%%%%%
\subsection{``Special vertex'' case}
\label{subsection_test_caso_non_bistocastico}
%%%%%%%%%%%%%%%%%%%%%%%%%%%%%%%%%%%%%%%%%%%%%%%%%%%%%%%%%%%%%%%%%%%%%%%%%%%%%%

We could test whether there is a ``special vertex'' in the network,
that is the weighted adjacency matrix is the one, say $W_p$, defined
in~\eqref{def:W_caso_non_bistocastico}, and so in this case the
considered hypothesis test is the following:
$$
H_0:\, W=W_p\qquad\mbox{ vs} \qquad
H_1:\, W\neq W_p.
$$
Note by Subsection~\ref{subsection_caso_non_bistocastico} that
$\widehat{\Sigma}_{\gamma}=\frac{c}{2}A_p$,
$\widehat{\Sigma}_1=\frac{c^2}{2c-1}A_p$ and
$\widehat{\Sigma}_1^*=\frac{1}{4} A_p$,
where $A_p=(I-\mathbf{1}\mathbf{a}_p^{\top})(I-\mathbf{a}_p\mathbf{1}^{\top})$.
Hence, since $A_p$ has rank $(N-1)$, we have under $H_0$ that
$\|\mathbf{T}_{\gamma,n}\|^2$,
$\|\mathbf{T}_{1,n}\|^2$ and $\|\mathbf{T}_{1,n}^*\|^2$
defined as in~\eqref{def:test_statistics_a},~\eqref{def:test_statistics_b}
and~\eqref{def:test_statistics_c}
are all asympto\-ti\-cally chi-squared distributed with $(N-1)$
degrees of freedom.

%%%%%%%%%%%%%%%%%%%%%%%%%%%%%%%%%%%%%%%%%%%%%%%%%%%%%%%%%%%%%%%%%%%%%%%%%%%%%%
\section{Variants}\label{section_variants}
%%%%%%%%%%%%%%%%%%%%%%%%%%%%%%%%%%%%%%%%%%%%%%%%%%%%%%%%%%%%%%%%%%%%%%%%%%%%%%

We can consider the following two variants.

\subsection{The case of a ``forcing input''}

As in \cite{cri-dai-lou-min}, we can consider the following variant:

\begin{equation}\label{eq:dynamic-input}
\mathbf{Z}_{n+1}\ = \left(1-r_n\right)\mathbf{Z}_{n}\ +\ r_n\left[
  \rho \mathbf{X}_{n+1}+(1-\rho)q\mathbf{1}\right],
\end{equation}
where $E[\mathbf{X}_{n}|\mathcal{F}_{n-1}]=W^{\top}\,\mathbf{Z}_{n-1}$,
$\rho\in [0,1[$ and $q\in [0,1]$. The assumptions on $W$ and $(r_n)$
    are the same as in the previous sections. (Here we exclude the
    case $\rho=1$ since it corresponds to the model studied in the
    previous sections.)  \\

With the same notation as before, we consider the decomposition
\eqref{eq:decomposition_Z}. In particular, setting ${\widetilde Z}_n=
N^{-1/2}{\mathbf v}_1^{\top}{\mathbf Z}_n$, we obtain the dynamics
\begin{equation*}
\widetilde{Z}_{n+1}-\widetilde{Z}_{n}\ =\
-(1-\rho)r_n(\widetilde{Z}_{n}-q)
+\rho r_n N^{-1/2} \left(\mathbf{v}_1^{\top}\Delta\mathbf{M}_{n+1}\right),
\end{equation*}
where $\Delta
\mathbf{M}_{n+1}=\mathbf{X}_{n+1}-W^{\top}\,\mathbf{Z}_{n}$. Therefore we
have
\begin{equation*}
\widetilde{Z}_{n+1}-q\ =\
\left(1-(1-\rho)r_n\right)(\widetilde{Z}_{n}-q)
+\rho r_n N^{-1/2} \left(\mathbf{v}_1^{\top}\Delta\mathbf{M}_{n+1}\right)
\end{equation*}
and so
\begin{equation*}
(\widetilde{Z}_{n+1}-q)^2\ =\
\left(1-2r_n(1-\rho)\right)(\widetilde{Z}_{n}-q)^2
+r_n^2
\left[(1-\rho)^2 (\widetilde{Z}_{n}-q)^2+
\rho^2 N^{-1} \left(\mathbf{v}_1^{\top}\Delta\mathbf{M}_{n+1}\right)^2
\right].
\end{equation*}
It follows
\begin{equation*}
\begin{split}
E\left[(\widetilde{Z}_{n+1}-q)^2|\mathcal{F}_n\right]&\!=\!
\left(1-2r_n(1-\rho)\right)(\widetilde{Z}_{n}-q)^2
\!+\!r_n^2
\left\{\!
(1-\rho)^2 (\widetilde{Z}_{n}-q)^2
\!+\!\frac{\rho^2}{N}
E\left[
\left(\mathbf{v}_1^{\top}\Delta\mathbf{M}_{n+1}\right)^2|\mathcal{F}_n
\right]
\!\right\}\\
&\leq (\widetilde{Z}_{n}-q)^2 + r_n^2\xi_n.
\end{split}
\end{equation*}
Since $(\xi_n)_n$ is a bounded sequence of ${\mathcal F}_n$-measurable
random variables and $\sum_n r_n^2<+\infty$ for $1/2<\gamma\leq 1$, we
have that $\left((\widetilde{Z}_{n}-q)^2\right)_n$ is a positive almost
supermartingale and so it converges almost surely (and also in mean
since it is bounded). On the other hand, from the above computations,
we also get
\begin{equation*}
E\left[(\widetilde{Z}_{n+1}-q)^2\right]=
\left(1-2r_n(1-\rho)\right)E\left[(\widetilde{Z}_{n}-q)^2\right]
+ r_n^2 E[\xi_n]
\end{equation*}
and so, since $\rho<1$, by \cite[Lemma A.1]{cri-dai-lou-min}, we can
conclude that $\lim_n E\left[ (\widetilde{Z}_{n}-q)^2 \right]=0$.
Therefore, we obtain
$$
\widetilde{Z}_n \stackrel{a.s.}\longrightarrow q.
$$ Regarding $\widehat{\mathbf{Z}}_{n}={\mathbf Z}_n-{\mathbf
  1}{\widetilde Z}_n =U\,V^{\top}\,\mathbf{Z}_{n}$, we can prove again that
it converges almost surely to $\mathbf{0}$. Indeed, if we set
$\mathbf{Z}_{V,n}=V^{\top}\widehat{\mathbf{Z}}_{n}=V^{\top}\mathbf{Z}_n$, that is
$\widehat{\mathbf{Z}}_{n}=U\mathbf{Z}_{V,n}$ by
\eqref{eq:relazioni-2}, we get from \eqref{eq:dynamic-input}
\begin{equation*}
\begin{split}
\mathbf{Z}_{V,n+1}&=
(1-r_n)\mathbf{Z}_{V,n}\
+\ r_n\left[\rho V^{\top}\Delta \mathbf{M}_{n+1}+\rho V^{\top}W^{\top}\mathbf{Z}_n
+(1-\rho)qV^{\top}\mathbf{1}\right]
\\
&=
(1-r_n)\mathbf{Z}_{V,n}\
+\ r_n\left[\rho V^{\top}\Delta \mathbf{M}_{n+1}+
\rho V^{\top}(\mathbf{u}_1\mathbf{v}_1^{\top}+ UDV^{\top})\mathbf{Z}_n
+(1-\rho)qN^{1/2}V^{\top}\mathbf{u}_1\right]
\\
&=
(1-r_n)\mathbf{Z}_{V,n}\ +\ r_n\rho D \mathbf{Z}_{V,n}
\ +\  r_n\rho V^{\top}\Delta \mathbf{M}_{n+1}
\\
&=
\left[I-r_n(I-\rho D)\right]\mathbf{Z}_{V,n}\
+\ r_n\rho V^{\top}\Delta \mathbf{M}_{n+1},
\end{split}
\end{equation*}
where $I$ here denotes the $(N-1)\times(N-1)$ identity matrix.
Arguing as in the proof of Theorem \ref{th:as_conv_Z_hat}, we can
obtain that $\left(\|\mathbf{Z}_{V,n}\|^2\right)_n$ is a positive
almost supermartingale which satisfies
$$
x_{n+1}\ \leq\ (1-2a^*r_n)x_n\ +\ C r_n^2
$$ with $x_n=E[\|\mathbf{Z}_{V,n}\|^2]$, $a^*=1-\rho{\mathcal
  R}e(\lambda^*)$ and $C$ a suitable constant.  Since $\rho{\mathcal
  R}e(\lambda^*)<1$, we have $a^*>0$, which implies $\lim_n x_n=0$ and
so $\|\mathbf{Z}_{V,n}\|^2\to 0$ almost surely, that is
$\mathbf{Z}_{V,n}\to \mathbf{0}$ almost surely.
\\

Summing up, also for the considered variant, we have an almost sure
synchronization, that is all the random variables $Z_{n,k}$ converge
almost surely to the same limit, but in this case the limit is the
constant ``forcing input'' $q$. It is interesting to observe that this
occurs for any weighted adjacency matrix $W$ satisfying the required
assumptions. It is also worthwhile to note that, in this case, for the
above computations, we do not need the condition ${\mathcal
  R}e(\lambda^*)<1$ since $\rho < 1$ automatically implies $\rho
{\mathcal R}e(\lambda^*) <1$ when ${\mathcal R}e(\lambda^*)\leq 1$.
\\

\indent We refer to \cite{cri-dai-lou-min} for some functional central
limit theorems in the case of $\rho<1$ and the mean-field interaction.

%%%%%%%%%%%%%%%%%%%%%%%%%%%%%%%%%%%%%%%%%%%%%%%%%%%%%%%%%%%%%%%%%%%%%%%%%%%%%%%
\subsection{The case of a reducible weighted adjacency matrix}
\label{subsection_reducible_W}
%%%%%%%%%%%%%%%%%%%%%%%%%%%%%%%%%%%%%%%%%%%%%%%%%%%%%%%%%%%%%%%%%%%%%%%%%%

We now consider an extension of the theory presented in this paper to
the case of {\em reducible} weighted adjacency matrix (see
\cite{ale-ghi} for a similar approach to systems of interacting
generalized Friedman urns).  Denoting by $m$, with $1\leq m\leq N$,
the multiplicity of the eigenvalue 1 of $W$,
i.e. $\lambda_1=...=\lambda_m=1$, the reducible matrix $W$ can in
general be expressed as follows:
\begin{equation}\label{def:W}
W\ =\ \left[\begin{aligned}
&W_1&&0&&...&&0&&W_{1f}\\
&0&&W_2&&...&&0&&W_{2f}\\
&:&&...&&...&&...&&...\\
&0&&0&&...&&W_m&&W_{mf}\\
&0&&0&&...&&0&&W_f
\end{aligned}
\right],
\end{equation}

where
\begin{itemize}
\item[(i)] $\{W_j;1\leq j\leq m\}$ are irreducible $n_j\times n_j$-matrices
  with $\lambda_{\max}(W_j)=1$;

\item[(ii)] (if exists) $W_f$ is a $n_f\times n_f$-matrix with
  $\lambda_{\max}(W_f)<1$;

\item[(iii)] (if exists) $\{W_{j,f};1\leq j\leq m\}$ are $n_j\times
  n_f$-matrices.
\end{itemize}
Naturally, $n_f+\sum^m_{j=1}n_j=N$.  The structure of $W$ given
in~\eqref{def:W} leads to a natural decomposition of the graph in
different sub-graphs $\{G_j;1\leq j\leq m\}$ associated to the sub-matrices
$\{W_j;1\leq j\leq m\}$ and $G_f$ associated to $W_f$.

Notice in~\eqref{def:W} that the vertices in $G_j$ are not influenced
by the vertices of the rest of the network, and hence the dynamics of
the corresponding processes can be fully established by considering
only the irreducible sub-matrix $W_j$ (see~\cite{ale-ghi} for further
details).  Then, applying the results presented in this paper to each
sub-graph $G_j$, it is possible to show that all the processes
positioned at the vertices in the same $G_j$ synchronize, that is they
all converge almost surely to the same limit.\\ \indent Concerning
$G_f$, the weighted adjacency matrix in~\eqref{def:W} shows that their
vertices are influenced by the vertices in $\{G_j;1\leq j\leq m\}$.
Specifically, applying similar arguments to the one adopted in this
paper, it is possible to establish that the processes in $G_f$
converge almost surely to convex combinations of the limits of the
processes in $\{G_j;1\leq j\leq m\}$, where the weights of such
combinations are related to the matrices $\{W_{jf};1\leq
j\leq m\}$.
\bigskip

%%%%%%%%%%%%%%%%%%%%%%%%%%%%%%%%%%%%%%%
\begin{center}
{\bf Acknowledgments}
\end{center}
%\noindent {\bf Acknowledgment.} \\
%%%%%%%%%%%%%%%%%%%%%%%%%%%%%%%%%%%%%%%%%%%%

\noindent Irene Crimaldi and Andrea Ghiglietti are members of the
Italian group ``Gruppo Nazionale per l'Analisi Matematica, la
Probabilit\`a e le loro Applicazioni (GNAMPA)'' of the Italian
Institute ``Istituto Nazionale di Alta Matematica (INdAM)''.\\ Giacomo
Aletti is a member of the Italian group ``Gruppo Nazionale per il
Calcolo Scientifico (GNCS)'' of the Italian Institute ``Istituto
Nazionale di Alta Matematica (INdAM)''.\\ Irene Crimaldi acknowledges
support from CNR PNR Project ``CRISIS Lab''.

%%%%%%%%%%%%%%%%%%%%%%%%%%%%%%%%%%%%%%%%%%%%%%%%%%%%%%%%%%%%%%%%%%%%%%%%%%%
\appendix\label{appendix}
%%%%%%%%%%%%%%%%%%%%%%%%%%%%%%%%%%%%%%%%%%%%%%%%%%%%%%%%%%%%%%%%%%%%%%%%%%%

\begin{center}
\huge{{\bf Appendix}}
\end{center}

\section{Some technical results}

In all the sequel, given $(a_n), (b_n)$ two sequences of real numbers
with $b_n\geq 0$, the notation $a_n=O(b_n)$ means $|a_n|\leq C b_n$
for a suitable constant $C>0$ and $n$ large enough.  Therefore, if we
also have $a_n^{-1}=O(b_n^{-1})$, then $C^{\top}b_n\leq |a_n|\leq C b_n$ for
suitable constants $C,C'>0$ and $n$ large enough. Given $(z_n), (z'_n)$
two sequences of complex numbers, with $z'_n\neq 0$, the notation
$z_n\sim z z'_n$, with $z\neq 0$, means $\lim_n z_n/z'_n=z$ and the
notation $z_n=o(z'_n)$ means $\lim_n z_n/z'_n=0$.

\subsection{Asymptotic results for sums of complex numbers}

We start recalling Toeplitz lemma (see \cite{lin-ros}), from which we
get useful corollaries employed in our proofs.

\begin{lem}\label{toeplitz-lemma-real} (Toeplitz lemma)\\
Let $\{x_{n,k}:\, 1\leq k\leq k_n\}$ be a triangular array of real
numbers with $k_n\uparrow +\infty$ and such that
\begin{itemize}
\item[i)] $\lim_n x_{n,k}=0$ for each fixed $k$;
\item[ii)] $\lim_n \sum_{k=1}^{k_n} x_{n,k}=1$;
\item[iii)] $\sum_{k=1}^{k_n} |x_{n,k}|=O(1)$.
\end{itemize}
Let $(y_n)_n$ be a sequence of real numbers with $\lim_n
y_n=y\in{\mathbb R}$. Then, we have $\lim_n \sum_{k=1}^{k_n}
x_{n,k} y_k=y$.
\end{lem}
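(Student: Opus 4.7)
The plan is a classical one: reduce to the case $y=0$, then split the sum into a finite prefix and a tail, using hypothesis (i) on the prefix and hypothesis (iii) on the tail.

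First I would write
$$\sum_{k=1}^{k_n} x_{n,k} y_k \;=\; y\sum_{k=1}^{k_n} x_{n,k} \;+\; \sum_{k=1}^{k_n} x_{n,k}(y_k - y).$$
By hypothesis (ii) the first summand tends to $y$, so the result reduces to showing that whenever $z_n\to 0$ one has $\sum_{k=1}^{k_n} x_{n,k} z_k \to 0$. In this reduction I am simply subtracting the constant limit $y$ and relabeling $z_k := y_k - y$.

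Next, given $\varepsilon>0$, I would pick $K=K(\varepsilon)$ such that $|z_k|<\varepsilon$ for every $k\geq K$, and bound
$$\left|\sum_{k=1}^{k_n} x_{n,k} z_k\right| \;\leq\; \sum_{k=1}^{K-1} |x_{n,k}|\,|z_k| \;+\; \varepsilon\sum_{k=K}^{k_n} |x_{n,k}|.$$
Hypothesis (i) makes the first (finite) sum tend to $0$ as $n\to\infty$ with $K$ fixed, while hypothesis (iii) bounds the second sum by $\varepsilon\,C$ for a constant $C$ independent of $n$. Taking $\limsup_n$ and then letting $\varepsilon\downarrow 0$ yields the claim.

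There is no serious obstacle here; the only point requiring care is the order of quantifiers. One must fix $\varepsilon$ first, then choose $K$ from the convergence $z_k\to 0$, and only afterwards send $n\to\infty$ with $K$ held fixed, so that the finite prefix of $(K-1)$ terms actually vanishes by (i). This is the standard ``finite head plus uniformly small tail'' pattern.
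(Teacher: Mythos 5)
Your proof is correct and is the standard argument: reduce to $y=0$ by subtracting off the constant limit, then split the sum into a fixed finite head, which vanishes by hypothesis (i), and a tail controlled by $\varepsilon$ times the uniform bound from hypothesis (iii); the monotone divergence $k_n\uparrow+\infty$ guarantees the split makes sense for $n$ large. Note that the paper does not actually prove this lemma, it only recalls it as a known result and cites the reference \cite{lin-ros}, so there is no in-paper proof to compare against; your argument is the classical one.
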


\begin{rem}\label{toeplitz-real-zero}
\rm If in the above lemma we replace condition ii) by $\lim_n
\sum_{k=1}^{k_n} x_{n,k}=0$, we get $\lim_n \sum_{k=1}^{k_n}
x_{n,k}y_k=0$. Indeed,  applying Lemma \ref{toeplitz-lemma-real} to
$\widetilde x_{n,k}=x_{n,k}-(k_n)^{-1}$, we find
\begin{equation*}
\lim_n\sum_{k=1}^{k_n} \left(x_{n,k}-\frac{1}{k_n}\right)y_k=
\lim_n\sum_{k=1}^{k_n} \widetilde x_{n,k} y_k=
y
\end{equation*}
Hence, since $\lim_n \sum_{k=1}^{k_n}y_k/k_n=y$ (again by Lemma
\ref{toeplitz-lemma-real}), we finally get
$\lim_n\sum_{k=1}^{k_n}x_{n,k}y_k= 0$.
\end{rem}

From Lemma \ref{toeplitz-lemma-real}, we can easily get the following
corollary:
\begin{cor}\label{toeplitz-cor-real}
Let $(x_n)_n,\, (x'_n)_n$ and $(c_n)_n$ be three sequences of real
numbers such that $x'_n>0,\, c_n\geq 0$, $x_n\sim x\, x'_n$ with $x\in
(0,+\infty)$ and $\lim_n c_n=0$. Suppose to have
$\lim_n c_n \sum_{k=1}^n x_k=s\in \{0,1\}$, then
$\lim_n c_n \sum_{k=1}^n x'_k = s/x$.
\end{cor}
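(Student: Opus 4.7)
The plan is to reduce this directly to the Toeplitz Lemma (or its zero-limit variant in Remark \ref{toeplitz-real-zero}) by cooking up an appropriate triangular array. Write $S_n=\sum_{k=1}^n x_k$ and $S'_n=\sum_{k=1}^n x'_k$, so the hypothesis reads $c_n S_n\to s$ and the goal is $c_n S'_n\to s/x$.

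The key observation is that $x_n\sim x\,x'_n$ with $x>0$ and $x'_n>0$ forces $x_n>0$ for all $k\geq k_0$ with some $k_0$ large enough, and that on this tail the ratio $y_k:=x'_k/x_k$ is well defined and satisfies $y_k\to 1/x$. So I would define the triangular array
\[
x_{n,k}:=c_n\,x_k,\qquad k_0\leq k\leq n,
\]
and check the three hypotheses of Lemma \ref{toeplitz-lemma-real}. Condition (i), $x_{n,k}\to 0$ for each fixed $k$, is immediate from $c_n\to 0$. Condition (ii) is exactly $\sum_{k=k_0}^n x_{n,k}=c_n(S_n-S_{k_0-1})\to s$, since $c_n\to 0$ kills the constant tail. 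For (iii), once $k\geq k_0$ the $x_k$ are positive, so $\sum_{k=k_0}^n|x_{n,k}|=c_n\sum_{k=k_0}^n x_k\to s$, hence in particular is $O(1)$.

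Then I apply Lemma \ref{toeplitz-lemma-real} when $s=1$ and Remark \ref{toeplitz-real-zero} when $s=0$ to the sequence $y_k\to 1/x$. Either way the conclusion is
\[
\sum_{k=k_0}^n x_{n,k}\,y_k\;=\;c_n\sum_{k=k_0}^n x'_k\;\longrightarrow\;\frac{s}{x}.
\]
Finally, since $c_n\sum_{k=1}^{k_0-1}x'_k\to 0$ (a finite sum times $c_n\to 0$), adding back the truncated head gives $c_n S'_n\to s/x$, as required.

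There is essentially no hard step: the only care needed is the split between $s=1$ and $s=0$ (to choose between the Lemma and its Remark) and the harmless truncation at $k_0$ to ensure positivity of the weights, so that hypothesis (iii) of Toeplitz holds. In particular, the assumption $s\in\{0,1\}$ is used exactly to match the hypothesis of the Lemma (for $s=1$) or of the Remark (for $s=0$); nothing stronger is required.
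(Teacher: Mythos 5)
Your proof is correct and follows essentially the same route as the paper's: truncate at an index beyond which $x_k>0$ (the paper calls it $\bar n$, you call it $k_0$), set $x_{n,k}=c_n x_k$, verify the Toeplitz hypotheses, apply Lemma~\ref{toeplitz-lemma-real} with $y_k=x'_k/x_k\to 1/x$ when $s=1$ and Remark~\ref{toeplitz-real-zero} when $s=0$, and absorb the finite head. You merely spell out the verification of conditions (i)--(iii) more explicitly than the paper does.
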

\begin{proof} By assumption, taking $\epsilon\in (0,x)$, we have
$x_n>(x-\epsilon)x'_n>0$ for $n\geq \bar{n}$ with a suitable
  $\bar{n}$. Moreover, since $c_n\to 0$, we have $\lim_n
  c_n\sum_{k=1}^n x_k'=\lim_n c_n\sum_{k=\bar{n}}^n x_k'$. Therefore,
  without loss of generality, we can suppose $x_n>0$ for each
  $n$. Hence, if $s=1$, it is enough to apply Lemma
  \ref{toeplitz-lemma-real} with $x_{n,k}=c_n x_k $, $y_n=x'_n/x_n$,
  $y=x^{-1}$; if $s=0$, it is enough to apply Remark
  \ref{toeplitz-real-zero} to $x_{n,k}=c_n x_k$.
\end{proof}

The following lemma extends Toeplitz Lemma and Remark
\ref{toeplitz-real-zero} to complex numbers:
\begin{lem}\label{toeplitz-lemma-complex} (Generalized Toeplitz lemma)\\
Let $\{z_{n,k}:\, 1\leq k\leq k_n\}$ be a
triangular array of complex numbers such that
\begin{itemize}
\item[i)] $\lim_n z_{n,k}=0$ for each fixed $k$;
\item[ii)] $\lim_n \sum_{k=1}^{k_n} z_{n,k}=s\in\{0,1\}$;
\item[iii)] $\sum_{k=1}^{k_n} |z_{n,k}|=O(1)$.
\end{itemize}
Let $(w_n)_n$ be a sequence of complex numbers with $\lim_n
w_n=w\in{\mathbb C}$. Then, we have $\lim_n \sum_{k=1}^{k_n}
z_{n,k}w_k=s w$.
\end{lem}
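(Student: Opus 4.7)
The plan is the usual Toeplitz splitting: write $w_k = w + \varepsilon_k$ with $\varepsilon_k := w_k - w \to 0$, so that
\[
\sum_{k=1}^{k_n} z_{n,k} w_k \;=\; w\,\sum_{k=1}^{k_n} z_{n,k} \;+\; \sum_{k=1}^{k_n} z_{n,k}\,\varepsilon_k.
\]
The first term tends to $sw$ by hypothesis (ii), so the task reduces to showing that the tail sum $R_n := \sum_{k=1}^{k_n} z_{n,k}\varepsilon_k$ tends to $0$.

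For $R_n$, I would use the standard $\varepsilon$--splitting argument, which works identically in the complex case since we only deal with moduli. By (iii) there is a constant $M>0$ with $\sum_{k=1}^{k_n}|z_{n,k}| \leq M$ for all $n$. Fix $\eta>0$ and choose $K$ large enough so that $|\varepsilon_k| < \eta/(2M)$ for all $k > K$. Then for every $n$ with $k_n > K$,
\[
|R_n| \;\leq\; \sum_{k=1}^{K}|z_{n,k}|\,|\varepsilon_k| \;+\; \sum_{k=K+1}^{k_n}|z_{n,k}|\,|\varepsilon_k| \;\leq\; \sum_{k=1}^{K}|z_{n,k}|\,|\varepsilon_k| \;+\; \frac{\eta}{2}.
\]
The first (finite) sum goes to $0$ as $n\to\infty$ by hypothesis (i), so $\limsup_n |R_n| \leq \eta/2 < \eta$. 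Since $\eta$ was arbitrary, $R_n \to 0$, and combining with the first term yields $\lim_n \sum_{k=1}^{k_n} z_{n,k} w_k = sw$, as required.

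Alternatively, one could reduce to the real Toeplitz statement (Lemma~\ref{toeplitz-lemma-real} together with Remark~\ref{toeplitz-real-zero}) by decomposing $z_{n,k} = a_{n,k} + i\,b_{n,k}$ and $\varepsilon_k = c_k + i\,d_k$ and treating the four real cross-sums separately; each array $(a_{n,k})$ and $(b_{n,k})$ inherits conditions (i) and (iii) from $(z_{n,k})$, and $c_k, d_k \to 0$. However the direct bound above is essentially the same computation and avoids the extra bookkeeping, so I would present that one. There is no real obstacle: the only point worth care is that hypothesis (ii) is not used for $R_n$ (only (i) and (iii) are), which is precisely why one can allow $s=0$ and $s=1$ uniformly and why the analogue of Remark~\ref{toeplitz-real-zero} for complex coefficients is automatic.
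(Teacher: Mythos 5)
Your proof is correct, and it takes a genuinely different route from the paper's. The paper decomposes everything into real and imaginary parts, writing $z_{n,k}=a_{n,k}+ib_{n,k}$, $w_n=c_n+id_n$, and then invokes the already-proved real Toeplitz lemma (Lemma~\ref{toeplitz-lemma-real}) together with Remark~\ref{toeplitz-real-zero} on the four real cross-sums $\sum a_{n,k}c_k$, $\sum a_{n,k}d_k$, $\sum b_{n,k}c_k$, $\sum b_{n,k}d_k$; this route amounts to exactly the ``alternative'' you sketch at the end and deliberately set aside. Your preferred argument instead proves the complex statement directly from scratch by the $\varepsilon$-splitting $w_k=w+\varepsilon_k$: the main term goes to $sw$ by (ii), and the remainder $R_n=\sum_k z_{n,k}\varepsilon_k$ is killed by the usual head/tail estimate using only (i) and (iii). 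This is self-contained, avoids the small bookkeeping the paper needs (deciding, for the real part, whether $\sum a_{n,k}\to 1$ so that Lemma~\ref{toeplitz-lemma-real} applies, or $\to 0$ so that Remark~\ref{toeplitz-real-zero} must be used instead), and makes transparent, as you point out, that condition (ii) plays no role in controlling the error term — which is precisely why $s=0$ and $s=1$ are handled uniformly and why no separate ``complex Remark'' is needed. Both arguments are sound; yours is arguably the cleaner exposition, while the paper's emphasizes reuse of the real lemma already in place.
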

\begin{proof} Set $z_{n,k}=a_{n,k}+ i b_{n,k}$, $w_n=c_n + i
d_n$ and $w=c + i d$. By assumption i), we have $\lim_n
a_{n,k}=0$ and $\lim_n b_{n,k}=0$, for each fixed $k$, and, by
assumption ii), we have  $\lim_n \sum_{k=1}^{k_n} a_{n,k}= s$ and $\lim_n
\sum_{k=1}^{k_n} b_{n,k}= 0$.  Applying Lemma
\ref{toeplitz-lemma-real} to $a_{n,k}$, we easily get $\lim_n
\sum_{k=1}^{k_n}a_{n,k} c_k= s c$ and $\lim_n
\sum_{k=1}^{k_n}a_{n,k} d_k= s d$. Then, applying Remark
\ref{toeplitz-real-zero} to $b_{n,k}$, we find
$\lim_n\sum_{k=1}^{k_n}b_{n,k}c_k= 0$ and
$\lim_n\sum_{k=1}^{k_n}b_{n,k} d_k= 0$. Therefore, we have
\begin{equation*}
\sum_{k=1}^{k_n} z_{n,k}w_k
=
\sum_{k=1}^{k_n}a_{n,k} c_k - \sum_{k=1}^{k_n}b_{n,k}d_k +
i \sum_{k=1}^{k_n}a_{n,k} d_k + \sum_{k=1}^{k_n}b_{n,k} c_k
\longrightarrow s(c+ i d) = s w.
\end{equation*}
\end{proof}

As before, from this lemma, we can easily get the following corollaries:
\begin{cor}\label{toeplitz-cor-complex}
Let $(z_n)_n,\, (v_n)_n$ and $(w_n)_n$ be three sequences of complex
numbers such that $\lim_n v_n=0$ and $\lim_n w_n=w\neq 0$. Set
$z'_n=z_n w_n$ and suppose to have $\lim_n v_n \sum_{k=1}^{n}
z_k=s\in\{0,1\}$ and $|v_n| \sum_{k=1}^{n} |z_k|=O(1)$ or,
  equivalently, $|v_n| \sum_{k=1}^{n} |z'_k|=O(1)$.  Then
  $\lim_n v_n \sum_{k=1}^{n} z'_k = s w$.
\end{cor}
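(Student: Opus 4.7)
The plan is to reduce Corollary~\ref{toeplitz-cor-complex} to a direct application of the generalized Toeplitz lemma (Lemma~\ref{toeplitz-lemma-complex}), mirroring exactly the way Corollary~\ref{toeplitz-cor-real} is derived from Lemma~\ref{toeplitz-lemma-real}. Specifically, I would set $k_n = n$, define the triangular array $z_{n,k} := v_n z_k$ for $1 \leq k \leq n$, and take the given sequence $(w_k)_k$ as the second sequence. Then the conclusion of Lemma~\ref{toeplitz-lemma-complex} reads $\lim_n \sum_{k=1}^n v_n z_k w_k = s w$, which is exactly $\lim_n v_n \sum_{k=1}^n z'_k = s w$ since $z'_k = z_k w_k$.

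The three hypotheses of Lemma~\ref{toeplitz-lemma-complex} are then checked routinely: (i) for each fixed $k$, $z_{n,k} = v_n z_k \to 0$ because $v_n \to 0$; (ii) $\sum_{k=1}^n z_{n,k} = v_n \sum_{k=1}^n z_k \to s \in \{0,1\}$ by hypothesis; (iii) $\sum_{k=1}^n |z_{n,k}| = |v_n| \sum_{k=1}^n |z_k| = O(1)$, again by hypothesis. So the three items are verified essentially by rewriting.

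The only small extra observation needed is the stated equivalence between $|v_n|\sum_{k=1}^n |z_k| = O(1)$ and $|v_n|\sum_{k=1}^n |z'_k| = O(1)$. This follows from $w_n \to w \neq 0$: there exist $n_0$ and constants $0 < c_1 \leq c_2 < \infty$ such that $c_1 \leq |w_n| \leq c_2$ for all $n \geq n_0$, whence $c_1 |z_n| \leq |z'_n| \leq c_2 |z_n|$ for $n \geq n_0$. The finitely many terms with $n < n_0$ contribute a bounded quantity which is harmless under $O(1)$, so the two boundedness conditions are indeed interchangeable.

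There is no real obstacle: the corollary is essentially a repackaging of Lemma~\ref{toeplitz-lemma-complex} with the triangular array chosen as $v_n z_k$, and the proof is short once this is recognized. The only point to state carefully is the equivalence of the two forms of the $O(1)$ hypothesis, which serves just to give the reader the freedom to use whichever is more convenient in applications (as is done in the proof of Theorem~\ref{thm:asymptotics_Z_hat}).
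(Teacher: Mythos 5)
Your proposal is correct and takes essentially the same approach as the paper: applying Lemma~\ref{toeplitz-lemma-complex} to the triangular array $z_{n,k}=v_n z_k$ together with the sequence $(w_n)$, and establishing the equivalence of the two $O(1)$ hypotheses by bounding $|w_n|$ between positive constants for large $n$ and noting the finitely many remaining terms are harmless because $v_n\to 0$.
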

\begin{proof} It is enough to apply Lemma
\ref{toeplitz-lemma-complex} to $z_{n,k}=v_n z_k$ and $w_n$. To this
purpose, note that, by assumption, taking $\epsilon\in (0,|w|)$ (note
that $|w|>0$ by assumption), we have $0<|w|-\epsilon \leq |w_n|\leq
|w| + \epsilon$ for $n\geq \bar{n}$ with $\bar n$ large
enough. Therefore, by the relation $z'_k=z_k w_k$, we can affirm that
$$
|v_n| \sum_{k=\bar n}^n \frac{|z'_k|}{|w|+\epsilon}\leq
|v_n| \sum_{k=\bar n}^{n} |z_k|\leq
|v_n| \sum_{k=\bar n}^{n} \frac{|z'_k|}{|w|-\epsilon}
$$
and so the two conditions $|v_n| \sum_{k=1}^{n} |z_k|=O(1)$ and
$|v_n| \sum_{k=1}^{n} |z'_k|=O(1)$ are equivalent since $|v_n|\to 0$.
\end{proof}

\begin{cor}\label{kronecker-complex} (Generalized Kronecker lemma)\\
Let $(v_n)$ and $(z_k)$ be two sequences of complex numbers such that
$$
v_n\neq 0,\quad \lim_n v_n=0,\quad
|v_n|\sum_{k=1}^n \left|\frac{1}{v_k}-\frac{1}{v_{k-1}}\right|=O(1)
$$
and $\sum_n z_n$ is convergent. Then
$$
\lim_n v_n \sum_{k=1}^n \frac{z_k}{v_k}=0.
$$
\end{cor}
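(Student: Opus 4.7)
The plan is to imitate the classical Kronecker argument via Abel summation by parts, but using the \emph{tail} of the convergent series rather than its partial sums, so that the boundary terms vanish automatically.

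Let $S:=\sum_{n\geq 1}z_n$ and set $R_k:=S-\sum_{j=1}^{k}z_j$, with $R_0=S$. By hypothesis $R_k\to 0$, and we have the identity $z_k=R_{k-1}-R_k$. Substituting and reindexing one of the two resulting sums, I obtain
\[
\sum_{k=1}^{n}\frac{z_k}{v_k}
=\sum_{k=1}^{n}\frac{R_{k-1}}{v_k}-\sum_{k=1}^{n}\frac{R_k}{v_k}
=\frac{R_0}{v_1}-\frac{R_n}{v_n}+\sum_{k=1}^{n-1}R_k\left(\frac{1}{v_{k+1}}-\frac{1}{v_k}\right).
\]
Multiplying by $v_n$ gives the decomposition
\[
v_n\sum_{k=1}^{n}\frac{z_k}{v_k}
=\frac{v_n}{v_1}R_0\ -\ R_n\ +\ v_n\sum_{k=1}^{n-1}R_k\left(\frac{1}{v_{k+1}}-\frac{1}{v_k}\right),
\]
and the goal becomes to show that each of the three summands tends to $0$.

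The first summand tends to $0$ because $v_n\to 0$ and $R_0/v_1$ is a fixed constant; the second summand tends to $0$ because $R_n\to 0$. For the third (and main) summand I apply a Toeplitz-type argument with vanishing multipliers. Setting $z_{n,k}:=v_n\bigl(1/v_{k+1}-1/v_k\bigr)$ for $1\leq k\leq n-1$, the hypothesis $|v_n|\sum_{k=1}^{n}|1/v_k-1/v_{k-1}|=O(1)$ (after the harmless index shift that replaces the increment $1/v_k-1/v_{k-1}$ by $1/v_{k+1}-1/v_k$) yields $\sum_{k=1}^{n-1}|z_{n,k}|=O(1)$; also $z_{n,k}\to 0$ for each fixed $k$, since $v_n\to 0$. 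Now, given $\varepsilon>0$, choose $K$ with $|R_k|<\varepsilon$ for $k>K$; then
\[
\left|\,v_n\sum_{k=1}^{n-1}R_k\!\left(\tfrac{1}{v_{k+1}}-\tfrac{1}{v_k}\right)\right|
\leq \sum_{k=1}^{K}|R_k|\,|z_{n,k}|\ +\ \varepsilon\sum_{k=K+1}^{n-1}|z_{n,k}|.
\]
The first sum involves finitely many terms, each tending to $0$, hence it vanishes in the limit; the second is bounded by $\varepsilon\cdot C$ for a constant $C$ independent of $n$. Letting $n\to\infty$ and then $\varepsilon\downarrow 0$ concludes the argument.

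The only slightly delicate point will be the bookkeeping in the Abel summation (the two reindexed sums share all but one boundary term, and must combine into the telescoping differences $1/v_{k+1}-1/v_k$), but once this identity is written down cleanly the rest of the argument is the standard ``vanishing-multiplier'' Toeplitz reasoning already used in Remark~\ref{toeplitz-real-zero}. Thus no new technical ingredient beyond the hypotheses is required.
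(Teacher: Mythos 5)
Your proof is correct and follows the same route as the paper: Abel summation against the tails of the convergent series, after which the two boundary terms vanish because $v_n\to 0$ and the tails tend to $0$, and the remaining sum is controlled by the hypothesis $|v_n|\sum_k|1/v_k-1/v_{k-1}|=O(1)$ via a vanishing-multiplier (Toeplitz-type) argument. The only cosmetic difference is that you carry out the $\varepsilon$--$K$ splitting by hand rather than invoking the paper's Generalized Toeplitz Lemma (Lemma~\ref{toeplitz-lemma-complex}), and you index the tails as $R_k=\sum_{j>k}z_j$ where the paper uses $w_n=\sum_{k\geq n}z_k=R_{n-1}$; the substance is identical.
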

\begin{proof}
Set $w_n=\sum_{k=n}^{+\infty} z_k$ and observe that, since $\sum_n
z_n$ is convergent, we have $\lim_n w_n=w=0$ and, moreover, we can write
\begin{equation*}
\begin{split}
v_n\sum_{k=1}^n \frac{z_k}{v_k}&=v_n\sum_{k=1}^n \frac{w_k-w_{k+1}}{v_k}\\
&=v_n\left[\sum_{k=2}^n \left(\frac{1}{v_k}-\frac{1}{v_{k-1}}\right)w_k
+\frac{w_1}{v_1}-\frac{w_{n+1}}{v_n}\right]\\
&=v_n\sum_{k=2}^n \left(\frac{1}{v_k}-\frac{1}{v_{k-1}}\right)w_k
+v_n\frac{w_1}{v_1}- w_{n+1}.
\end{split}
\end{equation*}
The second and the third term obviously converge to zero. In order to
prove that the first term converges to zero, it is enough to apply
Lemma \ref{toeplitz-lemma-complex} to $w_n$ and
$z_{n,k}=v_n\left(\frac{1}{v_k}-\frac{1}{v_{k-1}}\right)$.
\end{proof}

The above corollary is useful to get the following result for
complex random variables:

\begin{lem}\label{lemma-serie-rv}
Let ${\mathcal H}=({\mathcal H}_n)_n$ be an increasing filtration and
$(Y_n)$ a $\mathcal H$-adapted sequence of complex random variables
such that $E[Y_{n}|{\mathcal H}_{n-1}]\to Y$ almost surely. Moreover,
let $(c_n)$ be a sequence of strictly positive real numbers such that
$\sum_n E\left[|Y_n|^2\right]/c_n^2<+\infty$ and  let
$(v_n)$ be a sequence of complex numbers such that $v_n\neq 0$ and
\begin{equation}\label{cond-serie-rv-1}
\lim_n v_n=0,\quad
\lim_n v_n\sum_{k=1}^n \frac{1}{c_kv_k}= \eta\in{\mathbb C},
\end{equation}
\begin{equation}\label{cond-serie-rv-2}
|v_n|\sum_{k=1}^n\frac{1}{c_k|v_k|}=O(1),
\quad
|v_n| \sum_{k=1}^n \left|\frac{1}{v_k}-\frac{1}{v_{k-1}}\right|=O(1).
\end{equation}
Then $\lim_n v_n \sum_{k=1}^n Y_k/(c_kv_k)=\eta Y$.
\end{lem}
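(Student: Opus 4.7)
The plan is to split each $Y_k$ into its predictable projection and its martingale increment, treat the two contributions with two different classical tools (the generalized Toeplitz lemma and the generalized Kronecker lemma), and then add.

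Write $Y_k = P_k + \Delta_k$ where $P_k := E[Y_k\mid\mathcal{H}_{k-1}]$ and $\Delta_k := Y_k - P_k$, so that $E[\Delta_k\mid\mathcal{H}_{k-1}]=0$. Then
\begin{equation*}
v_n\sum_{k=1}^n \frac{Y_k}{c_k v_k}
\;=\; v_n\sum_{k=1}^n \frac{P_k}{c_k v_k} \;+\; v_n\sum_{k=1}^n \frac{\Delta_k}{c_k v_k},
\end{equation*}
and I would analyze the two summands separately.

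For the predictable part, recall that $P_k\to Y$ almost surely by hypothesis and that the three conditions $\lim_n v_n=0$, $\lim_n v_n\sum_{k=1}^n 1/(c_k v_k)=\eta$ and $|v_n|\sum_{k=1}^n 1/(c_k|v_k|)=O(1)$ are purely deterministic. On the full-probability event $\{P_k\to Y\}$ I would therefore apply Lemma~\ref{toeplitz-lemma-complex} pathwise with $z_{n,k}:=v_n/(c_k v_k)$ and $w_k:=P_k$. The statement of that lemma is formally restricted to $s\in\{0,1\}$, but the general case $s=\eta\in\mathbb{C}$ is obtained by a trivial rescaling (divide $z_{n,k}$ by $\eta$ when $\eta\ne 0$, apply it as it stands when $\eta=0$). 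This yields
\begin{equation*}
v_n\sum_{k=1}^n \frac{P_k}{c_k v_k}\;\longrightarrow\;\eta\,Y \quad\text{almost surely.}
\end{equation*}

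For the martingale part, set $z_k:=\Delta_k/c_k$. The increments $\Delta_k$ are pairwise orthogonal in the complex Hilbert space $L^2(\mathbb{C})$ with inner product $\langle X,X'\rangle=E[X\,\overline{X'}]$, and $E[|\Delta_k|^2]\le E[|Y_k|^2]$ by the orthogonality of $P_k$ and $\Delta_k$ in that same space. Hence $\sum_k E[|z_k|^2] \le \sum_k E[|Y_k|^2]/c_k^2 < \infty$, so $(\sum_{k=1}^n z_k)_n$ is an $L^2$-bounded complex martingale and therefore converges almost surely. The series $\sum_n z_n$ being convergent, Corollary~\ref{kronecker-complex} applies pathwise (its hypotheses on $(v_n)$ are exactly the two conditions gathered in \eqref{cond-serie-rv-2}) and produces
\begin{equation*}
v_n\sum_{k=1}^n \frac{\Delta_k}{c_k v_k} \;=\; v_n\sum_{k=1}^n \frac{z_k}{v_k}\;\longrightarrow\;0 \quad\text{almost surely.}
\end{equation*}
Summing the two limits gives the conclusion $\lim_n v_n\sum_{k=1}^n Y_k/(c_k v_k)=\eta Y$ almost surely.

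The main thing to keep straight is the division of labor between the two hypotheses in \eqref{cond-serie-rv-2}: the bound $|v_n|\sum 1/(c_k|v_k|)=O(1)$ is what supplies condition (iii) of Lemma~\ref{toeplitz-lemma-complex} for the predictable part, while $|v_n|\sum |1/v_k-1/v_{k-1}|=O(1)$ is precisely what Corollary~\ref{kronecker-complex} needs for the martingale part. Apart from this, the only mildly delicate point is the extension of Lemma~\ref{toeplitz-lemma-complex} from $s\in\{0,1\}$ to a general complex $s=\eta$, which is resolved by rescaling as indicated above; everything else is bookkeeping.
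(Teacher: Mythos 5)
Your proposal is correct and follows essentially the same route as the paper: decompose $Y_k$ into its predictable part $E[Y_k\,|\,\mathcal{H}_{k-1}]$ and the martingale increment, handle the first with Lemma~\ref{toeplitz-lemma-complex} (rescaling by $\eta$ when $\eta\neq 0$) and the second via an $L^2$-bounded martingale and Corollary~\ref{kronecker-complex}, then add. The only cosmetic difference is that you spell out the orthogonality inequality $E[|\Delta_k|^2]\le E[|Y_k|^2]$, which the paper leaves implicit; everything else, including the division of labor between the two conditions in \eqref{cond-serie-rv-2}, matches the paper's argument.
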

\begin{proof} Let $A$ be an event such that $P(A)=1$ and $\lim_n
E[Y_n|{\mathcal H}_{n-1}](\omega)=Y(\omega)$ for each $\omega\in
A$. Fix $\omega \in A$ and set $w_n=E[Y_n|{\mathcal H}_{n-1}](\omega)$
and $w=Y(\omega)$. If $\eta\neq 0$, applying Lemma
\ref{toeplitz-lemma-complex} to $z_{n,k}=v_n/(c_kv_k\eta)$, $s=1$ and $w_n$,
we obtain
$$
\lim_n v_n \sum_{k=1}^n \frac{E[Y_k|{\mathcal H}_{k-1}](\omega)}{c_kv_k\eta}=
Y(\omega).
$$
If $\eta=0$,  applying Lemma
\ref{toeplitz-lemma-complex} to $z_{n,k}=v_n/(c_kv_k)$, $s=0$ and $w_n$,
we obtain
$$
\lim_n v_n \sum_{k=1}^n \frac{E[Y_k|{\mathcal H}_{k-1}](\omega)}{c_kv_k}=
0.
$$
Therefore, for both cases, we have
$$
v_n \sum_{k=1}^n \frac{E[Y_k|{\mathcal H}_{k-1}]}{c_kv_k}
\stackrel{a.s.}\longrightarrow \eta Y.
$$

Now, consider the martingale $(M_n)$ defined by
$$
M_n=\sum_{k=1}^n \frac{Y_k-E[Y_k|{\mathcal H}_{k-1}]}{c_k}.
$$
It is bounded in $L^2$ since $\sum_{k=1}^n
\frac{E[|Y_k|^2]}{c_k^2}<+\infty$ by assumption and so it is almost surely
convergent, that means $$ \sum_{k}
\frac{Y_k(\omega)-E[Y_k|{\mathcal H}_{k-1}](\omega)}{c_k}<+\infty
$$ for $\omega\in B$ with $P(B)=1$. Therefore, fixing $\omega\in B$
and setting $z_k=\frac{Y_k(\omega)-E[Y_k|{\mathcal
      H}_{k-1}](\omega)}{c_k}$, by Corollay \ref{kronecker-complex},
we get
$$
v_n \sum_{k=1}^n
\frac{Y_k(\omega)-E[Y_k|{\mathcal H}_{k-1}](\omega)}{c_kv_k}\longrightarrow 0.
$$
In order to conclude, it is enough to observe that
$$
v_n \sum_{k=1}^n \frac{Y_k}{c_kv_k}=
v_n \sum_{k=1}^n \frac{Y_k-E[Y_k|{\mathcal H}_{k-1}]}{c_kv_k} +
v_n \sum_{k=1}^n \frac{E[Y_k|{\mathcal H}_{k-1}]}{c_kv_k}.
$$
\end{proof}

\begin{rem}\label{rem-serie-rv}
\rm It is useful to note that, whenever $(v_n)$ is a decreasing
sequence of positive real numbers (the case of the classical Kronecker
lemma), conditions \eqref{cond-serie-rv-1} obviously entail conditions
\eqref{cond-serie-rv-2}. Moreover,
$$
|v_n|\sum_{k=1}^n\frac{1}{c_k|v_k|}=O(1)\quad\mbox{and}\quad
c_n|v_n|\left|\frac{1}{v_n}-\frac{1}{v_{n-1}}\right|=O(1)
$$
imply conditions \eqref{cond-serie-rv-2}. Indeed, we have
$$
|v_n|\sum_{k=1}^n\left|\frac{1}{v_k}-\frac{1}{v_{k-1}}\right|
=
|v_n|\sum_{k=1}^n\frac{1}{c_k|v_k|}
c_k|v_k|\left|\frac{1}{v_k}-\frac{1}{v_{k-1}}\right|.
$$
\end{rem}

We conclude this subsection recalling the following well-known
relations for $a\in{\mathbb R}$:
\begin{equation}\label{relazione-nota}
\sum_{k=1}^n\frac{1}{k^{1-a}}=
\begin{cases}
& O(1) \quad\mbox{for } a<0,\\
& \log(n) + d_n \quad\mbox{for } a=0,\\
&a^{-1}\, n^a + d_n \quad\mbox{for } 0< a< 1,
\end{cases}
\end{equation}
where $(d_n)$ denotes a bounded sequence.

\subsection{Asymptotic results for products of complex numbers}
We now present the framework for the results of this subsection.
Fix $1/2<\gamma\leq 1$ and $c>0$, and consider a sequence $(r_n)_n$ of
real numbers such that $0<r_n<1$ and
\begin{equation}\label{eq:condition_r_n}
r_n\sim \frac{c}{n^{\gamma}}.
\end{equation}

Let $\alpha_1=a_1+i\,b_1\in{\mathbb C}$ and
$\alpha_2=a_2+i\,b_2\in{\mathbb C}$ with $a_1,a_2>0$.
Denote by $m_0\geq 2$ an integer such that $max(a_1,a_2)<r_m^{-1}$
for all $m\geq m_0$ and define for $n\geq m_0$ and $j=1,2$,
$$
p_{n,j}=\prod_{m=m_0}^n (1-\alpha_j r_m) \qquad\mbox{and}\qquad
\ell_{n,j}=p_{n,j}^{-1}.
$$

Then, inspired by the computation done in \cite{cri-dai-lou-min}, we can prove
the following technical results.

\begin{lem}\label{lemma-tecnico_1}
For $j=1,2$ and for any $\epsilon\in (0,1)$, we have that
\begin{equation}\label{affermazione1}
|p_{n,j}|\ =\ \begin{cases}
O\left(\exp \left[ -(1-\epsilon)\frac{ca_j }{1-\gamma}n^{1-\gamma} \right]\right)
& \mbox{for } 1/2<\gamma<1 \\
O\left(n^{- (1-\epsilon)ca_j}\right) & \mbox{for } \gamma=1
\end{cases}
\end{equation}
and
\begin{equation}\label{affermazione1-ell}
|\ell_{n,j}|\ =\ \begin{cases}
O\left(\exp \left[ (1+\epsilon)\frac{ca_j }{1-\gamma}n^{1-\gamma} \right]\right)
& \mbox{for } 1/2<\gamma<1 \\
O\left(n^{(1+\epsilon)ca_j}\right) & \mbox{for } \gamma=1.
\end{cases}
\end{equation}
Moreover, if we replace~\eqref{eq:condition_r_n} with the following
\begin{equation}\label{eq:condition_r_n_bis}
n^{\gamma}r_n-c\ =\  O\left(n^{-\gamma}\right),
\end{equation}
we have that
\begin{equation}\label{affermazione1_bis}
|p_{n,j}|\ =\ \begin{cases}
O\left(\exp \left[ -\frac{ca_j }{1-\gamma}n^{1-\gamma} \right]\right)
& \mbox{for } 1/2<\gamma<1 \\
O\left(n^{- ca_j}\right) & \mbox{for } \gamma=1
\end{cases}
\end{equation}
and
\begin{equation}\label{affermazione1_bis-ell}
|\ell_{n,j}|\ =\ \begin{cases}
O\left(\exp \left[ \frac{ca_j }{1-\gamma}n^{1-\gamma} \right]\right)
& \mbox{for } 1/2<\gamma<1 \\
O\left(n^{ca_j}\right) & \mbox{for } \gamma=1.
\end{cases}
\end{equation}
\end{lem}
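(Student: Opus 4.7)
The plan is to work with $\log |p_{n,j}|^2$, estimate it by Taylor expansion of the logarithm, and then invoke the standard asymptotics for partial sums recalled in~\eqref{relazione-nota}. Writing $\alpha_j = a_j + i b_j$, we have the exact identity
\begin{equation*}
|1-\alpha_j r_m|^2 = 1 - 2 a_j r_m + |\alpha_j|^2 r_m^2.
\end{equation*}
For $m \geq m_0$ the factor $r_m$ tends to $0$, so for $m$ large enough the quantity inside the logarithm lies in a neighborhood of $1$ in which $\log(1-x) = -x + O(x^2)$ is valid. Consequently,
\begin{equation*}
\log|1-\alpha_j r_m|^2 = -2 a_j r_m + O(r_m^2),
\end{equation*}
and summing over $m_0 \leq m \leq n$ we obtain
\begin{equation*}
\log|p_{n,j}|^2 = -2 a_j \sum_{m=m_0}^n r_m + \sum_{m=m_0}^n O(r_m^2).
\end{equation*}
Since $2\gamma > 1$, the tail $\sum_m r_m^2$ converges (using $r_m = O(m^{-\gamma})$), so the second sum is $O(1)$ uniformly in $n$.

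Next I would estimate $S_n := \sum_{m=m_0}^n r_m$ under each hypothesis on $(r_n)$. Under assumption~\eqref{eq:condition_r_n} alone, i.e.\ $r_m \sim c m^{-\gamma}$, I can fix any $\epsilon \in (0,1)$ and choose $m$ large enough so that $(1-\epsilon) c m^{-\gamma} \leq r_m \leq (1+\epsilon) c m^{-\gamma}$. Combined with~\eqref{relazione-nota}, this yields, up to an $O(1)$ term,
\begin{equation*}
(1-\epsilon)\,\tfrac{c}{1-\gamma}\,n^{1-\gamma} \leq S_n \leq (1+\epsilon)\,\tfrac{c}{1-\gamma}\,n^{1-\gamma}
\quad\text{if } 1/2<\gamma<1,
\end{equation*}
and
\begin{equation*}
(1-\epsilon) c \log n \leq S_n \leq (1+\epsilon) c \log n
\quad\text{if } \gamma=1.
\end{equation*}
Feeding these into $\log|p_{n,j}|^2 = -2 a_j S_n + O(1)$ and exponentiating (taking square roots) produces the bounds~\eqref{affermazione1} on $|p_{n,j}|$ with the $(1-\epsilon)$ factor, using the upper bound on $S_n$; the analogous use of the lower bound on $S_n$ produces~\eqref{affermazione1-ell} on $|\ell_{n,j}| = |p_{n,j}|^{-1}$ with a $(1+\epsilon)$ factor. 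Because $\epsilon$ was arbitrary, this completes the first half of the lemma.

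For the sharper bounds~\eqref{affermazione1_bis} and~\eqref{affermazione1_bis-ell} I would invoke the strictly stronger hypothesis~\eqref{eq:condition_r_n_bis}, which reads $r_m = c\, m^{-\gamma} + O(m^{-2\gamma})$. Summing over $m$ and using $2\gamma > 1$ makes the tail of the correction absolutely summable, so
\begin{equation*}
S_n = c \sum_{m=m_0}^n m^{-\gamma} + O(1) = \begin{cases}\tfrac{c}{1-\gamma}\,n^{1-\gamma} + O(1) &\text{if } 1/2<\gamma<1,\\ c \log n + O(1) &\text{if } \gamma=1,\end{cases}
\end{equation*}
again by~\eqref{relazione-nota}. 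Substituting into $\log|p_{n,j}|^2 = -2 a_j S_n + O(1)$ and exponentiating now gives the clean bounds with no $\epsilon$ loss, and passing to reciprocals yields the corresponding bounds on $|\ell_{n,j}|$.

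The main technical point to watch is the passage from the asymptotic equivalence $r_m \sim c m^{-\gamma}$ to a usable summable error. Under~\eqref{eq:condition_r_n} the difference $r_m - c m^{-\gamma}$ is only $o(m^{-\gamma})$, which is not a priori summable, and this is precisely why an $\epsilon$-loss is forced in the first set of bounds; the stronger hypothesis~\eqref{eq:condition_r_n_bis} controls this difference by $O(m^{-2\gamma})$, restoring summability and hence the sharp constants. Everything else is a routine Taylor expansion and an application of~\eqref{relazione-nota}.
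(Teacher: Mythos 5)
Your plan is correct and takes essentially the same route as the paper: extract $\log|p_{n,j}|$ as a sum, identify the leading contribution $-a_j\sum_m r_m$ with an $O(1)$ error coming from $\sum_m r_m^2 < \infty$, and then estimate $\sum_m r_m$ under each hypothesis via~\eqref{relazione-nota} (the paper reaches the same point by factoring $p_{n,j}$ into a real product and a bounded complex correction, and then splitting $\exp(a_j\sum r_k)$ multiplicatively, but this is only a bookkeeping difference). One small slip in the write-up: since $|p_{n,j}| = \exp(-a_j S_n + O(1))$ with $a_j>0$, the $O$-bound~\eqref{affermazione1} on $|p_{n,j}|$ requires the \emph{lower} bound on $S_n$ (namely $S_n \geq (1-\epsilon)\tfrac{c}{1-\gamma}n^{1-\gamma}+O(1)$), and conversely~\eqref{affermazione1-ell} on $|\ell_{n,j}|$ uses the \emph{upper} bound on $S_n$; you have these two roles swapped, though of course both inequalities on $S_n$ are in hand, so the conclusion stands.
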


\begin{proof}
Consider $j=1,2$. We can easily write $p_{n,j}=p^*_{n,j} q_{n,j}$, where
\begin{equation*}
p^*_{n,j}=\prod_{m=m_0}^n (1-a_j r_m)
\quad\mbox{and}\quad
q_{n,j}=\prod_{m=m_0}^n \left(1-i \frac{b_j r_m}{1-a_jr_m}\right).
\end{equation*}
\noindent
We now observe that
$$ |q_{n,j}|^2\ =\ \prod_{m=m_0}^n \left(1+\frac{b_j^2
  r_m^2}{(1-a_jr_m)^2}\right) \ =\ \exp\left[
  \sum_{m=m_0}^n\ln\left(1+\frac{b_j^2 r_m^2}{(1-a_jr_m)^2}\right)
  \right],$$
and using the inequalities $-x\leq \ln(1+x)\leq x$ for
$x\geq 0$, we have that
$$
\exp\left[-b_j^2 \sum_{m=m_0}^n\frac{r_m^2}{(1-a_jr_m)^2}
\right]
\leq
|q_{n,j}|^2\
\leq\
\exp\left[b_j^2 \sum_{m=m_0}^n\frac{r_m^2}{(1-a_jr_m)^2}
\right].
$$
Hence, since the series $\sum_{m}\frac{r_m^2}{(1-a_jr_m)^2}$ is
convergent for $1/2<\gamma\leq 1$, we have $p_{n,j}=O(|p^*_{n,j}|)$
and $\ell_{n,j}=O(|\ell^*_{n,j}|)$ with $\ell^*_{n,j}=1/p^*_{n,j}$.
Therefore, it is enough to study
$$
p^*_{n,j}=\exp\left(\sum_{k=m_0}^{n}\ln(1-a_jr_k)\right)
\quad\mbox{and}\quad
\ell^*_{n,j}=\exp\left(-\sum_{k=m_0}^{n}\ln(1-a_jr_k)\right).
$$
\noindent Recalling the inequalities $\ln(1-x)\leq -x$ and
$-\ln(1-x)\leq x+x^2$ for $0\leq x\leq 1/2$ and the fact that the series
$\sum_k r_k^2$ is convergent for $1/2<\gamma\leq 1$, we get
\begin{equation*}
\begin{split}
p^*_{n,j}&=\ O\left( \exp\left(-a_j\sum_{k=m_0}^{n}r_k\right)\ \right)\\
\ell^*_{n,j}&=\ O\left( \exp\left(a_j\sum_{k=m_0}^{n}r_k\right)\ \right).
\end{split}
\end{equation*}

\noindent We now take into account the decomposition
$\exp(a_j\sum_{k=m_0}^{n}r_k)= s^*_{n,j} t^*_{n,j}$, where
\begin{equation*}
s^*_{n,j}=\exp\left(a_jc\sum_{k=m_0}^{n}k^{-\gamma}\right)
\quad\mbox{and}\quad
t^*_{n,j}=\exp\left(a_j\sum_{k=m_0}^{n}(r_k-ck^{-\gamma})\right).
\end{equation*}
Now, since by condition~\eqref{eq:condition_r_n}, for any $\epsilon\in (0,1)$
we have $|r_k-ck^{-\gamma}|\leq\epsilon ck^{-\gamma}$ for $k$ large
enough (depending on $\epsilon$), we obtain
$$
p^*_{n,j}=O((s^*_{n,j})^{-(1-\epsilon)})\quad\mbox{and}\quad
\ell^*_{n,j}=O((s^*_{n,j})^{1+\epsilon}).
$$
Then, \eqref{affermazione1} and \eqref{affermazione1-ell} follow by
noticing that, by means of \eqref{relazione-nota}, we have
\begin{equation}\label{eq:relazione_nota_s}
s^*_{n,j}=
\begin{cases}
O\left(\exp \left( \frac{ a_j c}{1-\gamma} n^{1-\gamma} \right)\right)
& \mbox{if } 1/2\!<\!\gamma\!<\!1
\\[3pt]
O\left( n^{a_jc}\right) & \mbox{if } \gamma=1
\end{cases}
\;\mbox{and}\;
(s^*_{n,j})^{-1}=
\begin{cases}
O\left(\exp \left( \frac{ - a_j c}{1-\gamma} n^{1-\gamma} \right)\right)
& \mbox{if } 1/2\!<\!\gamma\!<1
\\[3pt]
O\left( n^{- a_jc}\right) & \mbox{if } \gamma=1.
\end{cases}
\end{equation}

Finally, by condition~\eqref{eq:condition_r_n_bis} and since the series
$\sum_{k} O(k^{-2\gamma})$ is convergent, we have $t^*_{n,j}=O(1)$
and $(t^*_{n,j})^{-1}=O(1)$, which imply $p^*_{n,j}=O((s^*_{n,j})^{-1})$ and
$\ell^*_{n,j}=O(s^*_{n,j})$.  Then,
result~\eqref{affermazione1_bis} follows by applying
\eqref{eq:relazione_nota_s}.
\end{proof}

\begin{lem}\label{lemma-tecnico_2}
We have that
\begin{equation}\label{affermazione2}
\lim_n\,
n^{\gamma} p_{n,1} p_{n,2}\sum_{k=m_0}^{n} r_k^2 \,\ell_{k,1}\ell_{k,2}\ =\
\begin{cases}
\frac{c}{\alpha_1+\alpha_2 }\;&\mbox{if } 1/2<\gamma<1
\\[3pt]
\frac{c^2}{c(\alpha_1+\alpha_2)-1}\;&\mbox{if } \gamma=1,\ c(a_1+a_2)>1
\end{cases}
\end{equation}
and, for any $u\geq 1$, when $1/2<\gamma<1$ or when $\gamma=1$ and
$uc(a_1+a_2)>(2u-1)$, we have
\begin{equation}\label{affermazione3}
|p_{n,1}|^u\,|p_{n,2}|^u\sum_{k=m_0}^{n} r_k^{2u}\, |\ell_{k,1}|^u\,|\ell_{k,2}|^u
=O\left(n^{-\gamma (2u-1)}\right).
\end{equation}
\end{lem}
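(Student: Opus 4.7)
The plan is to reduce both claims to a single first–order linear recursion. Setting $L_n:=p_{n,1}p_{n,2}\sum_{k=m_0}^n r_k^2\,\ell_{k,1}\ell_{k,2}$ and $f_n:=(1-\alpha_1 r_n)(1-\alpha_2 r_n)$, the defining identities $p_{n,j}/p_{n-1,j}=1-\alpha_j r_n$ and $p_{n,j}\ell_{n,j}=1$ give immediately
$$L_n=f_n L_{n-1}+r_n^2,$$
and the sum in \eqref{affermazione3}, $T_n:=|p_{n,1}|^u|p_{n,2}|^u\sum_{k=m_0}^n r_k^{2u}|\ell_{k,1}|^u|\ell_{k,2}|^u$, satisfies $T_n=\tilde f_n T_{n-1}+r_n^{2u}$ with $\tilde f_n:=|1-\alpha_1 r_n|^u|1-\alpha_2 r_n|^u$. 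Working with these scalar recursions bypasses the need for any regularity of the sequence $(r_n)$ beyond $n^\gamma r_n\to c$.

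For \eqref{affermazione2} I would pass to $A_n:=n^\gamma L_n$, so that $A_n=\psi_n A_{n-1}+n^\gamma r_n^2$ with $\psi_n:=f_n(n/(n-1))^\gamma$. Expanding $(n/(n-1))^\gamma=1+\gamma/(n-1)+O(1/n^2)$ and $f_n=1-(\alpha_1+\alpha_2)r_n+O(r_n^2)$, and using only $n^\gamma r_n\to c$, one obtains
$$\psi_n-1=\begin{cases}-(\alpha_1+\alpha_2)r_n+o(r_n)&\text{if }1/2<\gamma<1,\\[2pt] [1-c(\alpha_1+\alpha_2)]/n+o(1/n)&\text{if }\gamma=1,\end{cases}$$
(for $\gamma<1$ the term $\gamma/(n-1)$ is absorbed into $o(r_n)$ since $r_n\gg 1/n$), while $n^\gamma r_n^2=(n^\gamma r_n)\,r_n\sim c\,r_n$. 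Denoting by $A^*$ the conjectured limit in \eqref{affermazione2} and setting $B_n:=A_n-A^*$, a direct substitution gives $B_n=\psi_n B_{n-1}+\varepsilon_n$ with $\varepsilon_n:=(\psi_n-1)A^*+n^\gamma r_n^2$, and $A^*$ is chosen precisely so that the leading orders cancel, leaving $\varepsilon_n=o(r_n)$ when $\gamma<1$ and $\varepsilon_n=o(1/n)$ when $\gamma=1$.

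To conclude $B_n\to 0$, I would compute $|\psi_n|^2=|f_n|^2(n/(n-1))^{2\gamma}$ and obtain $|\psi_n|=1-\beta_n+o(\beta_n)$ with $\beta_n=(a_1+a_2)r_n>0$ for $\gamma<1$ and $\beta_n=[c(a_1+a_2)-1]/n>0$ for $\gamma=1$ (positivity in the latter case is the hypothesis $c(a_1+a_2)>1$). In both regimes $\sum_n\beta_n=+\infty$ and $|\varepsilon_n|/\beta_n\to 0$, so iterating $|B_n|\le(1-\beta_n)|B_{n-1}|+|\varepsilon_n|$ and invoking the standard estimate already used in the proof of Theorem~\ref{th:as_conv_Z_hat} (cf.\ \cite{cri-dai-lou-min}) yields $B_n\to 0$, which is \eqref{affermazione2}. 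The same scheme applies to $\widetilde A_n:=n^{\gamma(2u-1)}T_n$: the expansion $|1-\alpha_j r_n|^u=1-ua_j r_n+O(r_n^2)$ gives contraction rate $u(a_1+a_2)r_n$ when $\gamma<1$ and $[uc(a_1+a_2)-(2u-1)]/n$ when $\gamma=1$, which is strictly positive precisely under the stated assumption $uc(a_1+a_2)>2u-1$; the forcing is $n^{\gamma(2u-1)}r_n^{2u}=(n^\gamma r_n)^{2u}/n^\gamma=O(1/n^\gamma)$, and one only needs boundedness of $(\widetilde A_n)$ (not a precise limit), which gives $T_n=O(n^{-\gamma(2u-1)})$, i.e.\ \eqref{affermazione3}.

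The hard part will be the separation–of–scales bookkeeping in the case $\gamma=1$: the $1/n$ coming from $(n/(n-1))^\gamma$ and the $r_n\sim c/n$ coming from $f_n$ are of the same order, and it is the cancellation between them that both produces the limiting constant $c^2/(c(\alpha_1+\alpha_2)-1)$ in \eqref{affermazione2} and supplies the positive contraction rate $[uc(a_1+a_2)-(2u-1)]/n$ in \eqref{affermazione3}. Under the weak hypothesis $r_n\sim c/n^\gamma$ (with no available control on $|r_{n+1}-r_n|$), this cancellation is not easily accessible by an Abel–summation argument applied to the sums directly, which is why the recursive formulation on $L_n$ and $T_n$—in which only $r_n$ itself, and not any discrete derivative of it, appears—is the right framework for the proof.
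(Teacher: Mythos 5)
The proposal is correct and takes a genuinely different route from the paper. The paper's own proof introduces the telescoping difference $D_{\gamma,k}:=k^{-\gamma}\ell_{k,1}\ell_{k,2}-(k-1)^{-\gamma}\ell_{k-1,1}\ell_{k-1,2}$, shows $D_{\gamma,k}\sim C\,r_k^2\ell_{k,1}\ell_{k,2}$ for the relevant constant $C$, and transfers the easily evaluated limit $n^\gamma p_{n,1}p_{n,2}\sum_k D_{\gamma,k}\to 1$ to the quantity of interest via the complex Toeplitz corollary (Corollary~\ref{toeplitz-cor-complex}); the $O$-bound~\eqref{affermazione3} is obtained the same way with Corollary~\ref{toeplitz-cor-real}. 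You instead observe that the full product $L_n=p_{n,1}p_{n,2}\sum_{k\le n}r_k^2\ell_{k,1}\ell_{k,2}$ satisfies the exact one-step recursion $L_n=(1-\alpha_1r_n)(1-\alpha_2r_n)L_{n-1}+r_n^2$, rescale to $A_n=n^\gamma L_n$, and perturb around the candidate fixed point $A^*$; the resulting inequality $|B_n|\le(1-\beta_n+o(\beta_n))|B_{n-1}|+o(\beta_n)$ with $\beta_n>0$, $\sum\beta_n=\infty$ is exactly the Robbins--Siegmund/stochastic-approximation estimate that the paper itself uses in the proof of Theorem~\ref{th:as_conv_Z_hat}, and the boundedness argument for $\widetilde A_n$ delivers~\eqref{affermazione3} by the same mechanism. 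I checked the algebra at each step: the recursion is correct, the expansion of $\psi_n-1$ in both regimes is correct, the cancellation $(\psi_n-1)A^*+n^\gamma r_n^2=o(\beta_n)$ holds by design, $|\psi_n|=1-(a_1+a_2)r_n+\gamma/n+o(\cdot)$ gives a strictly positive contraction rate precisely under the stated hypotheses (including $uc(a_1+a_2)>2u-1$ for the $O$-bound), and $c(\alpha_1+\alpha_2)\ne 1$ is automatic from $c(a_1+a_2)>1$. What your formulation buys is a single Lyapunov-type estimate replacing the Toeplitz machinery, and it makes transparent that only $n^\gamma r_n\to c$, and never a discrete derivative of $r_n$, is used. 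One small inaccuracy in your closing commentary: the paper's approach also never touches $r_{n+1}-r_n$ (it works with the ratio $\ell_{k-1,j}/\ell_{k,j}=1-\alpha_j r_k$), so the Abel-summation objection you raise does not actually distinguish the two methods; this does not affect the validity of your own argument.
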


\begin{proof} Let us start with observing that
relations \eqref{affermazione1} imply in particular
\begin{equation}\label{lim-zero}
\lim_n n^{\gamma} |p_{n,1}|\,|p_{n,2}|=0.
\end{equation}
Indeed, this fact follows immediately for $1/2<\gamma<1$ and, for $\gamma
=1$ one has to note that, since we assume $c(a_1+a_2)>1$, we can
choose $\epsilon$ small enough so that $c(1-\epsilon)(a_1+a_2) > 1$.
Now, fix $k\geq 2$ and let us define the following quantity
\[\begin{aligned}
D_{\gamma,k}\ &&=&\ \frac{1}{k^{\gamma}} \ell_{k,1}\ell_{k,2} -
\frac{1}{(k-1)^{\gamma}}\ell_{k-1,1}\ell_{k-1,2}\\
&&=&\
\left(\frac{1}{k^{\gamma}} - \frac{1}{(k-1)^{\gamma}} \right)
\ell_{k-1,1}\ell_{k-1,2}
+\ \frac{1}{k^{\gamma}}\left(\ell_{k,1}\ell_{k,2} -
\ell_{k-1,1}\ell_{k-1,2}\right)\\
&&=&\
\ell_{k,1}\ell_{k,2}\left[
\left(\frac{1}{k^{\gamma}} - \frac{1}{(k-1)^{\gamma}} \right)
\frac{\ell_{k-1,1}\ell_{k-1,2}}{\ell_{k,1}\ell_{k,2}}
+\ \frac{1}{k^{\gamma}}
\left(1-\frac{\ell_{k-1,1}\ell_{k-1,2}}{\ell_{k,1}\ell_{k,2}}\right)
\right].
\end{aligned}\]
Then, we observe the following:
\begin{equation}\label{eq:conti_1}
\left(\frac{1}{k^{\gamma}} - \frac{1}{(k-1)^{\gamma}}\right)\ =\
-\frac{\gamma}{k^{1+\gamma}}
+O\left(\frac{1}{k^{2+\gamma}}\right)\ =\
-\frac{\gamma}{k^{1+\gamma}}
+o\left(\frac{1}{k^{1+\gamma}}\right)\quad\mbox{for } k\to +\infty
\end{equation}
and
\begin{equation}\label{eq:conti_2}
\frac{\ell_{k-1,1}\ell_{k-1,2}}{\ell_{k,1}\ell_{k,2}}\ =\
(1-\alpha_1 r_k)(1-\alpha_2 r_k)\ =\
1+\alpha_1\alpha_2 r_k^2-(\alpha_1+\alpha_2)r_k.
\end{equation}
Now, by using~\eqref{eq:conti_1} and \eqref{eq:conti_2} in the above
expression of $D_{\gamma,k}$, and recalling~\eqref{eq:condition_r_n},
we have for $k\to +\infty$
\begin{equation*}
\begin{split}
D_{\gamma,k}\ &=\
\ell_{k,1}\ell_{k,2}
\left[-\frac{\gamma}{k^{\gamma + 1}}(1-\alpha_1r_k)(1-\alpha_2r_k)
+\frac{1}{k^\gamma}
\left( -\alpha_1\alpha_2r_{k}^2 + (\alpha_1+\alpha_2)r_{k} \right)\right]
+o\left(\frac{\ell_{k,1}\ell_{k,2}}{k^{1+\gamma}}\right)
\\
&=\
\ell_{k,1}\ell_{k,2}
\left[\frac{r_k}{k^\gamma}(\alpha_1+\alpha_2)-\frac{\gamma}{k^{\gamma + 1}}\right]
+o\left(\frac{\ell_{k,1}\ell_{k,2}}{k^{1+\gamma}}\right)
\\
&=\
\begin{cases}
\!\frac{(\alpha_1+\alpha_2)r_{k}}{k^{\gamma}} \ell_{k,1}\ell_{k,2} +
o(r_k^2\,\ell_{k,1}\,\ell_{k,2}) &\mbox{if } 1/2<\gamma<1
\\[3pt]
\!\left(\!\frac{(\alpha_1+\alpha_2)r_{k}}{k}-
\frac{1}{k^2}\!\right)
\ell_{k,1}\ell_{k,2}  +
o(r_k^2\,\ell_{k,1}\,\ell_{k,2})
&\mbox{if } \gamma=1,\ c(\alpha_1+\alpha_2)\neq 1
\end{cases}
\end{split}
\end{equation*}
that is
\begin{equation}\label{conti}
D_{\gamma, k}\sim\
\begin{cases}
\!\frac{(\alpha_1+\alpha_2)}{c}r_k^2 \,\ell_{k,1}\ell_{k,2}
 &\mbox{if } 1/2<\gamma<1
\\[3pt]
\!\frac{c(\alpha_1+\alpha_2)-1}{c^2}r_{k}^2\, \ell_{k,1}\ell_{k,2}
&\mbox{if } \gamma=1,\ c(\alpha_1+\alpha_2)\neq 1.
\end{cases}
\end{equation}
Note that, when $\gamma=1$, the condition $c(a_1+a_2)>1$ implies that
$c(\alpha_1+\alpha_2)\neq 1$, that ensures $D_{1,k}\sim
r_k^2\ell_{k,1}\ell_{k,2}$. Now, we want to apply Corollary
\ref{toeplitz-cor-complex} with
$$
z_n=D_{\gamma,n},\qquad v_n=n^\gamma\,p_{n,1}p_{n,2},\qquad
w_n=\frac{r_n^2\ell_{n,1}\ell_{n,2}}{D_{\gamma,n}}, \qquad
w=
\begin{cases}
\frac{c}{(\alpha_1+\alpha_2)}\; &\mbox{if } 1/2<\gamma<1
\\[3pt]
\frac{c^2}{c(\alpha_1+\alpha_2)-1}\; &\mbox{if } \gamma=1,\; c(a_1+a_2)>1.
\end{cases}
$$
Indeed, $\lim_n v_n=0$ by \eqref{lim-zero}, $\lim_n w_n=w\neq 0$
by \eqref{conti},
$$
v_n\sum_{k=m_0}^n z_k= n^{\gamma}p_{n,1}p_{n,2}
\sum_{k=m_0}^{n} D_{\gamma,k}=
n^{\gamma}p_{n,1}p_{n,2}
\left(\frac{\ell_{n,1}\ell_{n,2}}{n^\gamma}-
\frac{\ell_{m_0-1,1}\ell_{m_0-1,2}}{(m_0-1)^\gamma}
\right)\longrightarrow 1
$$ by \eqref{lim-zero} and
$z'_n=z_nw_n=r_n^2\ell_{n,1}\ell_{n,2}$. Finally, in order to apply
Corollary \ref{toeplitz-cor-complex}, it remains to prove that
$|v_n|\sum_{k=1}^n |z'_k|=O(1)$. In order to do this, we apply
Corollary \ref{toeplitz-cor-real} to
$$
x_n=\frac{1}{n^{\gamma}} |\ell_{n,1}|\,|\ell_{n,2}| -
\frac{1}{(n-1)^{\gamma}}|\ell_{n-1,1}|\,|\ell_{n-1,2}|,\quad
x'_n=r_n^2|\ell_{n,1}|\,|\ell_{n,2}|>0,\quad
c_n=n^\gamma\,|p_{n,1}|\,|p_{n,2}|.
$$
Indeed, we have $\lim_n c_n\sum_{k=1}^n x_k=1$ and, since
\begin{equation}\label{eq:conti_2-modulo}
\frac{|\ell_{k-1,1}|\,|\ell_{k-1,2}|}{|\ell_{k,1}|\,|\ell_{k,2}|}\ =\
|1+\alpha_1\alpha_2 r_k^2-(\alpha_1+\alpha_2)r_k|
\ =\ 1-(a_1+a_2)r_k +O(r_k^2),
\end{equation}
by computations similar to the ones done above, we can obtain
\begin{equation}\label{conti-modulo}
x_n\sim\
\begin{cases}
\!\frac{(a_1+a_2)}{c}\, x_n'
 &\mbox{if } 1/2<\gamma<1
\\[3pt]
\!\frac{c(a_1+a_2)-1}{c^2}\,x_n'
&\mbox{if } \gamma=1,\ c(a_1+a_2)>1
\end{cases}
\end{equation}
where both constants belong to $(0,+\infty)$. Therefore
$c_n\sum_{k=1}^n x'_k$ converges and so it is bounded. Hence, we have
verified all the conditions required by Corollary
\ref{toeplitz-cor-complex} and so we can conclude that we have $\lim_n
v_n\sum_{k=1}^n z'_k=w$, i.e. \eqref{affermazione2}.
\\

\indent Regarding \eqref{affermazione3}, we have already considered
the case $u=1$, which is related to $c_n\sum_{k=1}^n x'_k$. Similarly,
in order to prove \eqref{affermazione3} for $u>1$, we use
\begin{equation}\label{eq:conti_2-modulo}
\frac{|\ell_{k-1,1}|^u\,|\ell_{k-1,2}|^u}{|\ell_{k,1}|^u\,|\ell_{k,2}|^u}\ =\
|1+\alpha_1\alpha_2 r_k^2-(\alpha_1+\alpha_2)r_k|^u
\ =\ 1-u(a_1+a_2)r_k +O(r_k^2)
\end{equation}
and apply Corollary
\ref{toeplitz-cor-real} again. Indeed, with computations similar to
the one done before, we obtain
\begin{equation*}
\frac{|\ell_{k,1}|^u\, |\ell_{k,2}|^u}{k^{\gamma(2u-1)}}  -
\frac{|\ell_{k-1,1}|^u |\ell_{k-1,2}|^u}{(k-1)^{\gamma(2u-1)}}
\sim
\begin{cases}
\frac{u(a_1+a_2)}{c^{2u-1}}\, r_{k}^{2u}\, |\ell_{k,1}|^u\, |\ell_{k,2}|^u
&\mbox{if } 1/2<\gamma<1
\\[3pt]
\frac{uc(a_1+a_2)-(2u-1)}{c^{2u}}\, r_{k}^{2u}\, |\ell_{k,1}|^u\, |\ell_{k,2}|^u
&\mbox{if } \gamma=1,\ uc(a_1+a_2)>2u-1,
\end{cases}
\end{equation*}
where both constants belong to $(0,+\infty)$, and so we have
\begin{equation*}
\begin{split}
&\lim_n n^{\gamma (2u-1)}\,|p_{n,1}|^u\, |p_{n,2}|^u
\sum_{k=m_0}^{n}  r_k^{2u}\, |\ell_{k,1}|^u\,  |\ell_{k,2}|^u
=\\
&C(\gamma,u)\, \lim_n n^{\gamma (2u-1)}\,|p_{n,1}|^u\, |p_{n,2}|^u
\sum_{k=m_0}^{n}
\frac{|\ell_{k,1}|^u\, |\ell_{k,2}|^u }{k^{\gamma(2u-1)}}  -
\frac{|\ell_{k-1,j}|^u\, |\ell_{k-1,2}|^u}{(k-1)^{\gamma(2u-1)}}
=
C(\gamma,u)
\end{split}
\end{equation*}
for a suitable constant $C(\gamma,u)\in (0,+\infty)$.
\end{proof}

\begin{rem}
\rm We note that, if $\gamma=1$ and \eqref{eq:condition_r_n_bis} holds,
then we can add to \eqref{affermazione3} the following:
\begin{equation}\label{affermazione3-bis}
|p_{n,1}|^u\,|p_{n,2}|^u\sum_{k=m_0}^{n} r_k^{2u}\, |\ell_{k,1}|^u\,|\ell_{k,2}|^u
=\begin{cases}
O\left(\ln(n)/n^{uc(a_1+a_2)}\right)\quad &\mbox{if } uc(a_1+a_2)=(2u-1)\\
O\left(n^{-uc(a_1+a_2)}\right)\quad &\mbox{if } uc(a_1+a_2)<(2u-1).
\end{cases}
\end{equation}
Indeed, by means
of~\eqref{affermazione1_bis} and \eqref{affermazione1_bis-ell} in
Lemma~\ref{lemma-tecnico_1}, we have
\begin{equation*}
\begin{split}
|p_{n,1}|^u\, |p_{n,2}|^u\sum_{k=1}^n r_k^{2u} |\ell_{k,1}|^u\,|\ell_{k,2}|^u &=
O\left(n^{-uc(a_1+a_2)}\right)\sum_{k=1}^n O\left(k^{uc(a_1+a_2)-2u}\right)\\
&=
O\left(n^{-uc(a_1+a_2)}\right)
\sum_{k=1}^n O\left(\frac{1}{k^{1-(uc(a_1+a_2)-2u+1)}}\right).
\end{split}
\end{equation*}
\end{rem}

\begin{lem}\label{lemma-tecnico_3}
Let $\gamma=1$, $c(a_1+a_2)=1$ and replace
condition~\eqref{eq:condition_r_n}
by~\eqref{eq:condition_r_n_bis}. Then, we have
\begin{equation}\label{affermazione2-log}
\lim_n\,
\frac{n}{\ln(n)}p_{n,1} p_{n,2}\sum_{k=m_0}^{n} r_k^2 \,\ell_{k,1}\ell_{k,2}
\ =\
\begin{cases}
0\;&\mbox{if } b_1+b_2\neq 0
\\[3pt]
c^2\;&\mbox{if } b_1+b_2 = 0
\end{cases}
\end{equation}
and
\begin{equation}\label{affermazione3-log}
|p_{n,1}|^u\,|p_{n,2}|^u\sum_{k=m_0}^{n} r_k^{2u}\, |\ell_{k,1}|^u\,|\ell_{k,2}|^u
=
\begin{cases}
O(\ln(n)/n)\quad&\mbox{for } u=1
\\[3pt]
O\left(n^{-u}\right)\quad&\mbox{for } u>1.
\end{cases}
\end{equation}
\end{lem}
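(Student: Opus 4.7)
My plan is to adapt the telescoping-plus-remainder strategy of Lemma~\ref{lemma-tecnico_2} to the critical regime $c(a_1+a_2)=1$, separating the two subcases of~\eqref{affermazione2-log} according to whether $b_1+b_2$ is zero or not. Throughout I will rely on the sharper condition~\eqref{eq:condition_r_n_bis} to expand $r_k=c/k+O(k^{-2})$, on the modulus bounds $|p_{n,1}p_{n,2}|=O(n^{-1})$ and $|\ell_{k,1}\ell_{k,2}|=O(k)$ from \eqref{affermazione1_bis}--\eqref{affermazione1_bis-ell}, and on the identity $p_{n,1}p_{n,2}\cdot \ell_{n,1}\ell_{n,2}=1$.

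When $b_1+b_2\neq 0$, I would reuse the telescoping quantity $D_{1,k}:=\ell_{k,1}\ell_{k,2}/k-\ell_{k-1,1}\ell_{k-1,2}/(k-1)$ from Lemma~\ref{lemma-tecnico_2}. The Taylor expansion carried out there, refined via $r_k=c/k+O(k^{-2})$, yields $D_{1,k}=\frac{c(\alpha_1+\alpha_2)-1}{c^2}\,r_k^2\ell_{k,1}\ell_{k,2}+O(|\ell_{k,1}\ell_{k,2}|/k^3)=\frac{c(\alpha_1+\alpha_2)-1}{c^2}\,r_k^2\ell_{k,1}\ell_{k,2}+O(k^{-2})$, and the coefficient $c(\alpha_1+\alpha_2)-1=ic(b_1+b_2)$ is nonzero. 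Telescoping gives $\sum_{k=m_0}^n D_{1,k}=\ell_{n,1}\ell_{n,2}/n+\mathrm{const}=O(1)$, and the remainder is absolutely summable, so $\sum_{k=m_0}^n r_k^2\ell_{k,1}\ell_{k,2}=O(1)$. Multiplying by $p_{n,1}p_{n,2}=O(n^{-1})$ and then by $n/\ln(n)$ delivers the vanishing limit.

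When $b_1+b_2=0$, one has $c(\alpha_1+\alpha_2)=1$ and the leading coefficient in $D_{1,k}$ vanishes, so the standard telescoping no longer captures the dominant scale. My fix is a $\ln$-weighted variant: set $\tilde D_k:=\frac{\ln k}{k}\ell_{k,1}\ell_{k,2}-\frac{\ln(k-1)}{k-1}\ell_{k-1,1}\ell_{k-1,2}$. Using $(1-\alpha_1 r_k)(1-\alpha_2 r_k)=1-1/k+O(k^{-2})$ together with the elementary identity $\frac{\ln k}{k}-\frac{\ln(k-1)}{k-1}(1-\frac{1}{k})=\frac{\ln(k/(k-1))}{k}=\frac{1}{k^2}+O(k^{-3})$, one obtains $\tilde D_k=\ell_{k,1}\ell_{k,2}/k^2+O(\ln k/k^2)=\frac{1}{c^2}r_k^2\ell_{k,1}\ell_{k,2}+O(\ln k/k^2)$, with absolutely summable error. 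Telescoping yields $\sum_{k=m_0}^n\tilde D_k=(\ln n)\ell_{n,1}\ell_{n,2}/n+\mathrm{const}$; multiplying by $p_{n,1}p_{n,2}$ and invoking $p_{n,1}p_{n,2}\ell_{n,1}\ell_{n,2}=1$ turns the main piece into exactly $\ln(n)/n$, while the constant contributes only $O(n^{-1})$. Isolating $\sum r_k^2\ell_{k,1}\ell_{k,2}$ and dividing by $\ln(n)/n$ yields the limit $c^2$.

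The bound~\eqref{affermazione3-log} is then straightforward from Lemma~\ref{lemma-tecnico_1}: one has $|p_{n,1}|^u|p_{n,2}|^u=O(n^{-u})$ and $r_k^{2u}|\ell_{k,1}|^u|\ell_{k,2}|^u=O(k^{uc(a_1+a_2)-2u})=O(k^{-u})$, whose partial sum is $O(\ln n)$ for $u=1$ and $O(1)$ for $u>1$. The main obstacle is the degeneracy in the $b_1+b_2=0$ subcase: the Lemma~\ref{lemma-tecnico_2} telescoping loses its leading term, and the $\ln$-weighting in $\tilde D_k$ is designed precisely to produce a boundary term matching the expected $\ln(n)/n$ scaling, making the cancellation $p_{n,1}p_{n,2}\ell_{n,1}\ell_{n,2}=1$ do the decisive arithmetic work.
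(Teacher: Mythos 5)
Your proposal is correct, and the central idea for the delicate case $b_1+b_2=0$ --- replacing the telescoping quantity $D_{1,k}$ of Lemma~\ref{lemma-tecnico_2} by the $\ln$-weighted quantity $\tilde D_k=\frac{\ln k}{k}\ell_{k,1}\ell_{k,2}-\frac{\ln(k-1)}{k-1}\ell_{k-1,1}\ell_{k-1,2}$ --- is exactly the paper's $D_{\ln,k}$. Where you diverge is in the mechanism for passing from the asymptotics of $\tilde D_k$ to the limit of the weighted sum: the paper feeds these asymptotics into the generalized Toeplitz corollary (Corollary~\ref{toeplitz-cor-complex}), whereas you separate a leading term from an absolutely summable remainder --- obtaining the explicit bound $\tilde D_k-\frac{1}{c^2}r_k^2\ell_{k,1}\ell_{k,2}=O(\ln k/k^2)$ by exploiting the sharper expansion $r_k=c/k+O(k^{-2})$ from~\eqref{eq:condition_r_n_bis} --- and then use the exact cancellation $p_{n,1}p_{n,2}\ell_{n,1}\ell_{n,2}=1$ directly. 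The same pattern recurs for $b_1+b_2\neq 0$: the paper applies the corollary and identifies $\lim_n v_n\sum_k D_{1,k}=0$, while you show $\sum_k r_k^2\ell_{k,1}\ell_{k,2}=O(1)$ outright (noting $c(\alpha_1+\alpha_2)-1=ic(b_1+b_2)\neq 0$) so that the $O(1/\ln n)$ prefactor kills it. Your treatment of~\eqref{affermazione3-log} coincides with the paper's. Overall your route is more self-contained --- it avoids invoking the abstract corollary and produces a sharper error estimate --- while the paper's is the one that reuses the Toeplitz machinery already built for Lemma~\ref{lemma-tecnico_2}; both are sound.
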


\begin{proof}
First, note that~\eqref{affermazione2-log} for the case $b_1+b_2\neq
0$ can be established using the computations done for the proof of
Lemma~\ref{lemma-tecnico_2} with $\gamma=1$.  Indeed, we can apply
Corollary \ref{toeplitz-cor-complex} with
$$
z_n=D_{1,n},\quad
v_n=\frac{n}{\ln(n)}\,p_{n,1}p_{n,2},\quad
w_n=\frac{r_n^2\ell_{n,1}\ell_{n,2}}{D_{1,n}}, \quad
z_n'=z_nw_n=r_n^2\ell_{n,1}\ell_{n,2},\quad
w=\frac{c^2}{c(\alpha_1+\alpha_2)-1}
$$
In fact, by assumptions~\eqref{eq:condition_r_n_bis} and
$c(a_1+a_2)=1$, we have
\begin{equation}\label{lim-zero-log}
\lim_n \frac{n}{\ln(n)}\,|p_{n,1}|\, |p_{n,2}|=0
\end{equation}
since~\eqref{affermazione1_bis} in Lemma~\ref{lemma-tecnico_1} and,
moreover, we have $\lim_n w_n=w\neq 0$ by \eqref{conti} since
$c(\alpha_1+\alpha_2)\neq 1$, and $|v_n|\sum_{k=1}^n|z_k'|=O(1)$ by
\eqref{affermazione3-bis} with $u=1$ and, finally, we have $\lim_n
v_n\sum_{k=1}^n z_k=0$.  \\

\indent We now focus on the case $b_1+b_2=0$.  Fix $k\geq 2$ and let
us define the following quantity
\[\begin{aligned}
D_{\ln,k}\ &=\ \frac{\ln(k)}{k} \ell_{k,1}\ell_{k,2} -
\frac{\ln(k-1)}{k-1}\ell_{k-1,1}\ell_{k-1,2}\ \\
&=\
\left(\frac{\ln(k)}{k} - \frac{\ln(k-1)}{k-1} \right)
\ell_{k-1,1}\ell_{k-1,2}
+\ \frac{\ln(k)}{k}\left(\ell_{k,1}\ell_{k,2} -
\ell_{k-1,1}\ell_{k-1,2}\right)
\\
&=\
\ell_{k,1}\ell_{k,2}\left[
\left(\frac{\ln(k)}{k} - \frac{\ln(k-1)}{k-1} \right)
\frac{\ell_{k-1,1}\ell_{k-1,2}}{\ell_{k,1}\ell_{k,2}}
+\
\frac{\ln(k)}{k}
\left(1 -\frac{\ell_{k-1,1}\ell_{k-1,2}}{\ell_{k,1}\ell_{k,2}}\right)
\right]
\end{aligned}\]
We observe that for $k\to +\infty$
\begin{equation}\label{eq:conti_4}
\begin{aligned}
\left(\frac{\ln(k)}{k} - \frac{\ln(k-1)}{k-1}\right)\
&&=&\ -\frac{\ln(k)}{k(k-1)}\ -\frac{\ln(1-k^{-1})}{k-1}\\
&&=&\ -\frac{\ln(k)}{k^2}+\frac{1}{k^2} + O\left(\frac{\ln(k)}{k^3}\right)\\
&&=&\ -\frac{\ln(k)}{k^2}+\frac{1}{k^2} + o\left(\frac{1}{k^2}\right).
\end{aligned}
\end{equation}
Now, by using~\eqref{eq:conti_2} and~\eqref{eq:conti_4} in the
expression of $D_{\ln,k}$, and recalling~\eqref{eq:condition_r_n_bis},
we have that
\begin{equation*}
\begin{split}
D_{\ln,k}&=
\ell_{k,1}\ell_{k,2}
\left[
\left(-\frac{\ln(k)}{k^2}+\frac{1}{k^2}\right)(1-\alpha_1r_k)(1-\alpha_2r_k)
+\frac{\ln(k)}{k}
\left( -\alpha_1\alpha_2 r_{k}^2 + (\alpha_1+\alpha_2)r_{k} \right)\right]\\
&+\ o\left(\frac{\ell_{k,1}\ell_{k,2}}{k^2}\right)\\
&=\
\ell_{k,1}\ell_{k,2}
\left[\frac{r_k\ln(k)}{k}(\alpha_1+\alpha_2)-\frac{\ln(k)}{k^2}+
\frac{1}{k^2}\right]
+o\left(\frac{\ell_{k,1}\ell_{k,2}}{k^2}\right)\\
&=\
\left[\ln(k)\left(\!\frac{(\alpha_1+\alpha_2)r_{k}}{k}-\frac{1}{k^2}\!\right)
+\frac{1}{k^2}\right]
\ell_{k,1}\ell_{k,2}  + o\left(\frac{\ell_{k,1}\ell_{k,2}}{k^2}\right).
\end{split}
\end{equation*}
Then, since the equalities $c(a_1+a_2)=1$ and $b_1+b_2=0$ imply
$c(\alpha_1+\alpha_2)=1$, and recalling~\eqref{eq:condition_r_n_bis},
we obtain
\begin{equation}\label{conti_log}
D_{\ln,k}\ =\ \frac{1}{k^2}\ell_{k,1}\ell_{k,2}  +
o\left(\frac{\ell_{k,1}\ell_{k,2}}{k^2}\right)
\ =\ \frac{1}{k^2}\ell_{k,1}\ell_{k,2}  +
o(r_k^2\ell_{k,1}\ell_{k,2})\sim\
\frac{1}{c^2}r_k^2\ell_{k,1}\ell_{k,2}.
\end{equation}
Now, we want to apply Corollary \ref{toeplitz-cor-complex} with
$$
z_n=D_{\ln,n},\qquad v_n=\frac{n}{\ln(n)}\,p_{n,1}p_{n,2},\qquad
w_n=\frac{r_n^2\ell_{n,1}\ell_{n,2}}{D_{\ln,n}}, \qquad
z'_n=z_nw_n=r_n^2\ell_{n,1}\ell_{n,2},\qquad
w=c^2.
$$ Indeed, $\lim_n v_n=0$ by \eqref{lim-zero-log}, $\lim_n w_n=w\neq
0$ by \eqref{conti_log}, $|v_n|\sum_{k=1}^n |z'_k|=O(1)$ by
\eqref{affermazione3-bis} (with $u=1$) since $c(a_1+a_2)=1$ by
assumption,
\begin{equation*}
\begin{split}
\lim_n\,v_n\sum_{k=m_0}^n z_k&=\lim_n\, \frac{n}{\ln(n)}p_{n,1}p_{n,2}
\sum_{k=m_0}^{n} D_{\ln,k}\\
&=
\frac{n}{\ln(n)}p_{n,1}p_{n,2}
\left(\frac{\ln(n)\,\ell_{n,1}\ell_{n,2}}{n}-
\frac{\ln(m_0-1)\ell_{m_0-1,1}\ell_{m_0-1,2}}{(m_0-1)}
\right)\longrightarrow 1
\end{split}
\end{equation*}
by \eqref{lim-zero-log}.
Hence, all the conditions required by Corollary
\ref{toeplitz-cor-complex} hold and so we can conclude that we have
$\lim_n v_n\sum_{k=1}^n z'_k=w$, i.e. \eqref{affermazione2-log} for
$b_1+b_2=0$.  \\

\indent Finally, relations \eqref{affermazione3-log} follows from
\eqref{affermazione3-bis} using the assumption that $c(a_1+a_2)=1$.
\end{proof}

\subsection{A result for Gaussian random vectors}

The following result is about the standardization of Gaussian random
vectors with singular covariance matrix.

\begin{lem}\label{prop:gaussian_vector}
Let $X$ be a random vector with distribution
$\mathcal{N}_N(\mathbf{0},\Sigma)$ and consider the spectral
decomposition $\Sigma=O\Lambda O^{\top}$ (more precisely, $\Lambda$ is the
diagonal matrix containing the eigenvalues of $\Sigma$ and the columns
of $O$ form a corresponding orthonormal basis of right eigenvectors).
Let $1\leq r<N$ be the rank of $\Sigma$, define the matrix $L$ as
follows
\[[L]_{ij}\ =\ \left\{
\begin{aligned}
&\lambda^{-1/2}_{i}\ &&\mbox{if } i=j\mbox{ and }\lambda_{i}>0,\\
&0\ &&\mbox{otherwise},
\end{aligned}
\right.\]
and denote by $H$ the $r\times N$-matrix such that
\[[H]_{ij}\ =\ \left\{
\begin{aligned}
&1\ &&\mbox{if } i=j\mbox{ and }1\leq i\leq r,\\
&0\ &&\mbox{otherwise}.
\end{aligned}
\right.\]
Then, setting $M=HLO^{\top}$ and $Y=MX$, the distribution of
$Y$ is $\mathcal{N}_r(\mathbf{0},I)$.
\end{lem}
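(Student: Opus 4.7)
The plan is to exploit the fact that a linear transformation of a Gaussian random vector is again Gaussian, and then reduce everything to a routine covariance computation. Since $Y=MX$ with $M$ deterministic and $X\sim\mathcal{N}_N(\mathbf{0},\Sigma)$, the vector $Y$ is multivariate Gaussian with mean $M\cdot E[X]=\mathbf{0}$. Thus the entire content of the lemma is the identity $M\Sigma M^{\top}=I_r$, and one only needs to verify this matrix equality.

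Using the spectral decomposition $\Sigma=O\Lambda O^{\top}$ together with the orthogonality $O^{\top}O=I_N$, we compute
$$
M\Sigma M^{\top} \;=\; HLO^{\top}\,O\Lambda O^{\top}\,OLH^{\top} \;=\; HL\Lambda LH^{\top}.
$$
The matrix $L\Lambda L$ is diagonal, with $i$-th entry equal to $\lambda_i^{-1/2}\cdot\lambda_i\cdot\lambda_i^{-1/2}=1$ whenever $\lambda_i>0$, and $0$ otherwise (the zero rows and columns of $L$ kill the null eigenvalues of $\Lambda$). Since $\Sigma$ has rank $r$, exactly $r$ eigenvalues are strictly positive; adopting the convention implicit in the statement (namely that $\lambda_1,\dots,\lambda_r$ are the positive ones, matching the nonzero block of $H$), the diagonal of $L\Lambda L$ consists of $r$ ones followed by $N-r$ zeros.

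Finally, observing that left-multiplication by the $r\times N$ selection matrix $H$ extracts the first $r$ rows and right-multiplication by $H^{\top}$ extracts the first $r$ columns, one immediately obtains $HL\Lambda LH^{\top}=I_r$. Combining this with the Gaussianity and zero mean of $Y$ gives $Y\sim\mathcal{N}_r(\mathbf{0},I_r)$, as claimed.

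There is no serious obstacle here: the only point requiring care is bookkeeping for the ordering of eigenvalues, which is a convention built into the definitions of $L$ and $H$. If one prefers not to impose any ordering, the same computation goes through by reading $H$ as the projector onto the coordinates indexed by $\{i:\lambda_i>0\}$, so that the role of $H$ is precisely to discard the zero rows/columns of $L\Lambda L$.
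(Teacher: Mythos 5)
Your proof is correct and follows essentially the same route as the paper: observe that $Y=MX$ is Gaussian with mean zero, then compute $M\Sigma M^{\top}=HL\Lambda LH^{\top}=I_r$ using the spectral decomposition and the orthogonality of $O$. The extra remark about eigenvalue ordering is a sensible clarification of a convention the paper leaves implicit, but does not change the argument.
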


\proof It is immediate to see that $Y$ is a Gaussian vector since it
is a linear transformation of the Gaussian vector $X$.  Then, the
result follows by noticing that
$$Cov(Y)\ =\ M\Sigma M^{\top}\ =
\ HL(O^{\top}\Sigma O)LH^{\top}\ =\ H(L\Lambda L)H^{\top} \ =\ I.$$
\endproof

\section{Stable convergence and its variants}

 We recall here some basic definitions and results. For more details,
 we refer the reader to \cite{cri-let-pra-2007, hall-1980} and the
 references therein.\\

\indent Let $(\Omega, {\mathcal A}, P)$ be a probability space, and let
$S$ be a Polish space, endowed with its Borel $\sigma$-field. A {\em
  kernel} on $S$, or a random probability measure on $S$, is a
collection $K=\{K(\omega):\, \omega\in\Omega\}$ of probability
measures on the Borel $\sigma$-field of $S$ such that, for each
bounded Borel real function $f$ on $S$, the map
$$
\omega\mapsto
K\!f(\omega)=\int f (x)\, K(\omega)(dx)
$$
is $\mathcal A$-measurable. Given a sub-$\sigma$-field $\mathcal H$ of
$\mathcal A$, a kernel $K$ is said $\mathcal H$-measurable if all the
above random variables $K\!f$ are $\mathcal H$-measurable.\\

\indent On $(\Omega, {\mathcal A},P)$, let $(Y_n)$ be a sequence of
$S$-valued random variables, let $\mathcal H$ be a sub-$\sigma$-field
of $\mathcal A$, and let $K$ be a $\mathcal H$-measurable kernel on
$S$. Then we say that $Y_n$ converges {\em $\mathcal H$-stably} to
$K$, and we write $Y_n\stackrel{{\mathcal H}-stably}\longrightarrow
K$, if
$$
P(Y_n \in \cdot \,|\, H)\stackrel{weakly}\longrightarrow
E\left[K(\cdot)\,|\, H \right]
\qquad\hbox{for all } H\in{\mathcal H}\; \hbox{with } P(H) > 0.
$$
In the case when ${\mathcal H}={\mathcal A}$, we simply say that $Y_n$
converges {\em stably} to $K$ and we write
$Y_n\stackrel{stably}\longrightarrow K$. Clearly, if
$Y_n\stackrel{{\mathcal H}-stably}\longrightarrow K$, then $Y_n$
converges in distribution to the probability distribution
$E[K(\cdot)]$. Moreover, the $\mathcal H$-stable convergence of $Y_n$
to $K$ can be stated in terms of the following convergence of
conditional expectations:
\begin{equation}\label{def-stable}
E[f(Y_n)\,|\, {\mathcal H}]\stackrel{\sigma(L^1,\, L^{\infty})}\longrightarrow
K\!f
\end{equation}
for each bounded continuous real function $f$ on $S$. \\

\indent In \cite{cri-let-pra-2007} the notion of $\mathcal H$-stable
convergence is firstly generalized in a natural way replacing in
(\ref{def-stable}) the single sub-$\sigma$-field $\mathcal H$ by a
collection ${\mathcal G}=({\mathcal G}_n)$ (called conditioning system) of
sub-$\sigma$-fields of $\mathcal A$ and then it is strengthened by
substituting the convergence in $\sigma(L^1,L^{\infty})$ by the one in
probability (i.e. in $L^1$, since $f$ is bounded). Hence, according to
\cite{cri-let-pra-2007}, we say that $Y_n$ converges to $K$ {\em
  stably in the strong sense}, with respect to ${\mathcal G}=({\mathcal
  G}_n)$, if
\begin{equation}\label{def-stable-strong}
E\left[f(Y_n)\,|\,{\mathcal G}_n\right]\stackrel{P}\longrightarrow K\!f
\end{equation}
for each bounded continuous real function $f$ on $S$.\\

\indent Finally, a strengthening of the stable convergence in the
strong sense can be naturally obtained if in (\ref{def-stable-strong}) we
replace the convergence in probability by the almost sure convergence:
given a conditioning system ${\mathcal G}=({\mathcal G}_n)$, we say that $Y_n$
converges to $K$ in the sense of the {\em almost sure conditional
  convergence}, with respect to ${\mathcal G}$, if
\begin{equation}\label{def-as-cond}
E\left[f(Y_n)\,|\,{\mathcal G}_n\right]\stackrel{a.s.}\longrightarrow K\!f
\end{equation}
for each bounded continuous real function $f$ on $S$. Evidently, this
last type of convergence can be reformulated using the conditional
distributions. Indeed, if $K_n$ denotes a version of the conditional
distribution of $Y_n$ given ${\mathcal G}_n$, then the random variable
$K_n\!f$ is a version of the conditional expectation
$E\left[f(Y_n)|{\mathcal G}_n\right]$ and so we can say that $Y_n$ converges
to $K$ in the sense of the almost sure conditional convergence, with
respect to $\mathcal F$, if, for almost every $\omega$ in $\Omega$,
the probability measure $K_n(\omega)$ converges weakly to
$K(\omega)$. The almost sure conditional convergence has been
introduced in \cite{crimaldi-2009} and, subsequently, employed by
others in the urn model literature (e.g. \cite{aletti-2009, z}).
\\

We now conclude this section with some convergence results that we
need in our proofs. \\

From \cite[Proposition~3.1]{cri-pra}), we can get the following
result.

\begin{theo}\label{thm:triangular}
Let $({\mathbf T}_{n,k})_{n\geq 1, 1\leq k\leq k_n}$ be a triangular
array of $d$-dimensional real random vectors, such that, for each
fixed $n$, the finite sequence $({\mathbf T}_{n,k})_{1\leq k\leq k_n}$
is a martingale difference array with respect to a given filtration
$({\mathcal G}_{n,k})_{k\geq 0}$. Moreover, let $(t_n)_n$ be a
sequence of real numbers and assume that the following
conditions hold:
\begin{itemize}
\item[(c1)] ${\mathcal G}_{n,k}{\underline{\subset}} {\mathcal G}_{n+1,
  k}$ for each $n$ and $1\leq k\leq k_n$;
\item[(c2)] $\sum_{k=1}^{k_n} (t_n{\mathbf
  T}_{n,k})(t_n{\mathbf T}_{n,k})^{\top}=t_n^2\sum_{k=1}^{k_n} {\mathbf
  T}_{n,k}{\mathbf T}_{n,k}^{\top} \stackrel{P}\longrightarrow \Sigma$,
  where $\Sigma$ is a random positive semidefinite matrix;
\item[(c3)] $\sup_{1\leq k\leq k_n} |t_n{\mathbf T}_{n,k}|
\stackrel{L^1}\longrightarrow 0$.
\end{itemize}
Then $t_n\sum_{k=1}^{k_n}{\mathbf T}_{n,k}$ converges stably to the
Gaussian kernel ${\mathcal N}(0, \Sigma)$.
\end{theo}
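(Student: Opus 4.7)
The plan is to follow the classical martingale central limit theorem for triangular arrays, exploiting the nested filtration condition (c1) to upgrade plain distributional convergence to stable convergence. Set $\xi_{n,k}=t_n\mathbf{T}_{n,k}$ and, for each fixed $n$, observe that $(\sum_{j=1}^{k}\xi_{n,j})_{0\le k\le k_n}$ is a martingale with respect to $(\mathcal{G}_{n,k})_{k}$. Fix $\lambda\in\mathbb{R}^d$ and $A\in\bigcup_{m}\mathcal{G}_{m,0}$; by (c1), $A\in\mathcal{G}_{n,0}$ for all $n$ large enough, so the goal reduces to proving
$$
\lim_n E\!\left[\mathbbm{1}_A\, e^{i\lambda^{\top}\sum_{k=1}^{k_n}\xi_{n,k}}\right]
\ =\ E[\mathbbm{1}_A]\,e^{-\tfrac{1}{2}\lambda^{\top}\Sigma\lambda},
$$
which, together with a standard monotone class argument, is exactly the defining property of stable convergence to the (deterministic in $\lambda$, random in $\Sigma$) Gaussian kernel $\mathcal{N}(0,\Sigma)$.

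Second, I would develop the characteristic function through the telescoping
$$
e^{i\lambda^{\top}\sum_{k=1}^{k_n}\xi_{n,k}}\ =\ \prod_{k=1}^{k_n}\bigl(1+(e^{i\lambda^{\top}\xi_{n,k}}-1)\bigr),
$$
combined with the Taylor bound $\bigl|e^{ix}-1-ix+\tfrac{1}{2}x^{2}\bigr|\le |x|^{3}\wedge x^{2}$. Taking conditional expectations sequentially along $(\mathcal{G}_{n,k})_k$ annihilates the linear term in $\xi_{n,k}$ by the martingale property; the cubic remainder is negligible in $L^1$ because
$$
\sum_{k=1}^{k_n}|\xi_{n,k}|^{3}\ \le\ \Bigl(\sup_{1\le k\le k_n}|\xi_{n,k}|\Bigr)\cdot \sum_{k=1}^{k_n}|\xi_{n,k}|^{2},
$$
where the first factor tends to $0$ in $L^1$ by (c3) and the second is bounded in probability by (c2) (a truncation argument, standard in the martingale CLT literature, converts this into an $L^1$-type bound after localization); the quadratic term produces $-\tfrac12\lambda^{\top}\Sigma\lambda$ via (c2). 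A Gronwall-type iteration along $k=1,\dots,k_n$ then collapses the telescoping product to the Gaussian exponential, yielding the displayed limit.

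The main technical obstacle is the passage from the observed quadratic variation $\sum_k\xi_{n,k}\xi_{n,k}^{\top}$, which is what (c2) controls, to its predictable version $\sum_k E[\xi_{n,k}\xi_{n,k}^{\top}\mid\mathcal{G}_{n,k-1}]$, which is the one that pairs naturally with the step-by-step conditional-expectation cancellation of the linear term. I would handle this by writing the difference as a sum of matrix-valued martingale differences $\Delta N_{n,k}:=\xi_{n,k}\xi_{n,k}^{\top}-E[\xi_{n,k}\xi_{n,k}^{\top}\mid\mathcal{G}_{n,k-1}]$, each bounded in modulus by $2|\xi_{n,k}|^{2}$, and controlling $\sum_k \Delta N_{n,k}$ in $L^1$ by the same product-of-sup-and-sum decomposition used above, together with Cauchy--Schwarz. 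Once this replacement is justified, the argument reduces exactly to the hypotheses verified in \cite[Proposition~3.1]{cri-pra}, from which the stable convergence to $\mathcal{N}(0,\Sigma)$ follows.
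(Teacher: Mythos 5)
The paper gives no proof of this theorem: it is simply a citation of Crimaldi--Pratelli (Proposition~3.1 of \cite{cri-pra}), and your last sentence, which routes the argument back through that same proposition, essentially recovers what the paper itself does. The from-scratch sketch you give is the classical martingale CLT via telescoped characteristic functions (close to Hall--Heyde, Theorem~3.2), which is a reasonable route, but there is a genuine gap in the stable-convergence step. You reduce the claim to testing against events $A\in\bigcup_m\mathcal{G}_{m,0}$. Even after a monotone-class extension, this only yields $\sigma\bigl(\bigcup_m\mathcal{G}_{m,0}\bigr)$-stable convergence, which is strictly weaker than the stated $\mathcal{A}$-stable convergence. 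The usual upgrade (``$\mathcal{H}$-stable plus $Y_n$ and $K$ being $\mathcal{H}$-measurable implies $\mathcal{A}$-stable'') does not apply here, because $Y_n=t_n\sum_k\mathbf{T}_{n,k}$ is only $\mathcal{G}_{n,k_n}$-measurable, and nothing in (c1) puts $\mathcal{G}_{n,k_n}$ inside $\sigma\bigl(\bigcup_m\mathcal{G}_{m,0}\bigr)$. This is not a hypothetical worry: in the paper's own application one takes $\mathcal{G}_{n,k}=\mathcal{F}_{k+1}$, so $\mathcal{G}_{n,0}=\mathcal{F}_1$ for every $n$ and your test class collapses to $\mathcal{F}_1$, whereas full stable convergence (needed later to invoke Theorem~\ref{blocco}) requires testing against all of $\mathcal{A}$.

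Two further, more minor points. First, the passage from (c3), an $L^1$ bound on $\sup_k|t_n\mathbf{T}_{n,k}|$, to the uniform-integrability/localization you need for the quadratic and cubic remainders (including the $L^2$ control of $\sum_k\Delta N_{n,k}$) is waved at rather than carried out; it is standard when one also has a bound on $E[\sup_k|t_n\mathbf{T}_{n,k}|^2]$, but that does not follow from (c3) alone and deserves a sentence. Second, your claim that after replacing the observed by the predictable quadratic variation ``the argument reduces exactly to the hypotheses verified in \cite[Proposition~3.1]{cri-pra}'' is plausible as the intended reading of the paper's ``From \cite[Proposition~3.1]{cri-pra} we can get the following result,'' but it should be stated as a citation check rather than as something you have verified; as written it reads as circular, since that proposition is precisely the source of the theorem you are asked to prove.
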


The following result combines together a stable convergence and a
stable convergence in the strong sense.

\begin{theo}[{\cite[Lemma 1]{ber-cri-pra-rig}}]\label{blocco}
Suppose that $C_n$ and $D_n$ are $S$-valued random variables, that $M$
and $N$ are kernels on $S$, and that ${\mathcal G}=({\mathcal G}_n)_n$
is an (increasing) filtration satisfying for all $n$
$$
\sigma(C_n)\underline\subset{\mathcal G}_n\quad\hbox{and }\quad
\sigma(D_n)\underline\subset
\sigma\left({\textstyle\bigcup_n}{\mathcal G}_n\right)
$$

\noindent If $C_n$ stably converges to $M$ and $D_n$ converges to $N$
stably in the strong sense, with respect to $\mathcal G$, then
$$
[C_n, D_n]\stackrel{stably}\longrightarrow M \otimes N.
$$
(Here, $M\otimes N$ is the kernel on $S\times S$ such that $(M
\otimes N )(\omega) = M(\omega) \otimes N(\omega)$ for all $\omega$.)
\end{theo}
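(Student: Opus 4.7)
The plan is to verify joint stable convergence by checking that for every bounded $\mathcal A$-measurable $U$ and every bounded continuous $f, h$ on $S$,
\[
E[\,U\, f(C_n)\, h(D_n)\,] \;\longrightarrow\; E[\,U \cdot (Mf) \cdot (Nh)\,].
\]
Since the algebra of products $(x,y) \mapsto f(x)h(y)$ is measure-determining on $S \times S$, a standard density argument upgrades this to $E[U\,g(C_n,D_n)] \to E[U \cdot (M\otimes N)g]$ for all bounded continuous $g$ on the product space, which is precisely the stable convergence $[C_n, D_n] \to M \otimes N$.

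The first step is a reduction to $\mathcal G_m$-measurable test variables. Because $(C_n, D_n)$ and (after choosing $\mathcal G_\infty$-measurable versions of the limit kernels) the maps $\omega \mapsto Mf(\omega), Nh(\omega)$ all lie in $\mathcal G_\infty := \sigma(\bigcup_n \mathcal G_n)$, both sides of the target identity depend on $U$ only through $E[U \mid \mathcal G_\infty]$. Martingale $L^1$ approximation $E[U\mid \mathcal G_m] \to E[U\mid\mathcal G_\infty]$ then allows me to assume $U$ is $\mathcal G_m$-measurable for some $m$.

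The core computation then proceeds as follows. For $n \geq m$ the variable $U f(C_n)$ is $\mathcal G_n$-measurable thanks to $\sigma(C_n) \subset \mathcal G_n$, so the tower property gives
\[
E[U f(C_n) h(D_n)] \;=\; E\bigl[\,U f(C_n)\, E[h(D_n)\mid \mathcal G_n]\,\bigr].
\]
The strong stable convergence of $D_n$ says $E[h(D_n)\mid \mathcal G_n] \to Nh$ in probability; since $h$ is bounded the convergence is in $L^1$, and since $U f(C_n)$ is uniformly bounded I can replace $E[h(D_n)\mid \mathcal G_n]$ by $Nh$ up to an $o(1)$ term. The quantity $U \cdot Nh$ is a bounded $\mathcal A$-measurable test variable, so the stable convergence $C_n \to M$ applied against it produces $E[(U\cdot Nh)\,f(C_n)] \to E[(U\cdot Nh)\,Mf]$, which by Fubini for the product kernel equals $E[U\cdot (M\otimes N)(fh)]$.

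The main obstacle is the initial reduction step: it relies on being able to realize $M$ and $N$ as $\mathcal G_\infty$-measurable kernels, an essentially routine observation on uniqueness of stable limits for $\mathcal G_\infty$-measurable sequences that nonetheless deserves explicit justification. Once that point is granted, the proof is structurally an application of the tower property, where the strong form of stable convergence for $D_n$ plays the decisive role of decoupling: it lets one trade the random contribution $h(D_n)$ for its (deterministic in $C_n$) conditional image $Nh$ before invoking the stable convergence of $C_n$.
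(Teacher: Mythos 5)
The paper does not actually give a proof of this theorem: it is quoted verbatim from \cite[Lemma 1]{ber-cri-pra-rig} and used as a black box. So there is no "paper's own proof" to compare against; I can only assess your argument on its merits.

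Your proof is correct, and it is the natural (essentially the only) route to this kind of decoupling statement: pass through the tower property at level $\mathcal G_n$, use the strong stable convergence of $D_n$ to trade $h(D_n)$ for $Nh$ in $L^1$, and then feed $U\cdot Nh$ as a test variable into the ordinary stable convergence of $C_n$. A couple of remarks on the two points you flagged. The reduction to $\mathcal G_m$-measurable $U$ is not merely a convenience but genuinely necessary for your core step: without it, $Uf(C_n)$ is not $\mathcal G_n$-measurable and the tower property produces $E[f(C_n)\,E[Uh(D_n)\mid\mathcal G_n]]$, to which the strong stable convergence hypothesis does not apply. The "routine observation" you appeal to is indeed routine and can be recorded in one line: since each $f(C_n)$ is $\mathcal G_\infty$-measurable, $E[Uf(C_n)]=E\bigl[E[U\mid\mathcal G_\infty]f(C_n)\bigr]$ for all bounded $U$; passing to the limit gives $E[U\,Mf]=E\bigl[E[U\mid\mathcal G_\infty]\,Mf\bigr]=E\bigl[U\,E[Mf\mid\mathcal G_\infty]\bigr]$, whence $Mf=E[Mf\mid\mathcal G_\infty]$ a.s., which is all the reduction step requires (and the analogous statement for $Nh$ is even more immediate, being an $L^1$ limit of $\mathcal G_\infty$-measurable variables). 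The final density argument (from products $f\otimes h$ to general bounded continuous $g$ on $S\times S$) is standard for Polish $S$: one characterizes stable convergence of $[C_n,D_n]$ via convergence in distribution of $([C_n,D_n],U)$ for every bounded $U$, and the class of product functions is convergence-determining by Stone--Weierstrass together with the tightness already implied by the convergence of the marginals. So the proof stands as written, modulo spelling out the last two routine steps.
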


Given a conditioning system $\mathcal{G}=(\mathcal{G}_n)_n$, if
$\mathcal{U}$ is a sub-$\sigma$-field of $\mathcal{A}$ such that, for
each real integrable random variable $Y$, the conditional expectation
${\mathrm{E}}[Y\,|\,{\mathcal{G}}_n]$ converges almost surely to the
conditional expectation ${\rm \mathrm{E}}[Y\,|\,\mathcal{U}]$, then we
shall briefly say that $\mathcal{U}$ is an {\em asymptotic
  $\sigma$-field} for $\mathcal{G}$. In order that there exists an
asymptotic $\sigma$-field $\mathcal{U}$ for a given conditioning
system $\mathcal{G}$, it is obviously sufficient that the sequence
$(\mathcal{G}_n)_n$ is increasing or decreasing.  (Indeed we can take
$\mathcal{U}=\bigvee_n\mathcal{G}_n$ in the first case and
${\mathcal{U}}=\bigcap_n\mathcal{G}_n$ in the second one.)

\begin{theo}[{\cite[Theorem A.1]{crimaldi-2009}}]\label{fam_tri_vet_as_inf}
 On~$(\Omega,\mathcal{A},P)$,
  for each $n\geq 1$, let $(\mathcal{F}_{n,h})_{h\in{\mathbb N}}$ be a
  filtration and $(M_{n,h})_{h\in{\mathbb N}}$ a real martingale with respect
  to $({\mathcal{F}}_{n,h})_{h\in{\mathbb N}}$, with $M_{n,0}=0$, which
  converges in $L^1$ to a random variable $M_{n,\infty}$. Set

\begin{equation*}
X_{n,j}:=M_{n,j}-M_{n,j-1}\quad\hbox{for } j\geq 1,\quad
U_n:=\textstyle\sum_{j\geq 1}X_{n,j}^2,\quad
X_n^*:=\textstyle\sup_{j\geq 1}\;|X_{n,j}|.
\end{equation*}

Further, let $(k_n)_{n\geq 1}$ be a sequence of strictly positive
integers such that $k_nX_n^*\stackrel{a.s.}\to 0$ and let
$\mathcal{U}$ be a sub-$\sigma$-field which is asymptotic for the
conditioning system $\mathcal{G}$ defined by
$\mathcal{G}_n={\mathcal{F}}_{n,k_n}$. Assume that the sequence
$(X_n^*)_n$ is dominated in $L^1$ and that the sequence $(U_n)_n$
converges almost surely to a positive real random variable $U$ which
is measurable with respect to $\mathcal{U}$.  \\

Then, with respect to the conditioning
system $\mathcal{G}$, the sequence $(M_{n,\infty})_{n}$ converges to the
Gaussian kernel ${\mathcal{N}}(0,U)$ in the sense of the almost sure
conditional convergence.
\end{theo}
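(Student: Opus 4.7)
\textbf{Proof proposal for Theorem \ref{fam_tri_vet_as_inf}.}
The plan is to verify the almost sure conditional convergence by checking it on characteristic functions, which by a standard density argument (Fourier inversion plus the fact that the limit Gaussian kernel is determined by its characteristic function) suffices to deduce the convergence $E[f(M_{n,\infty})\,|\,\mathcal{G}_n]\stackrel{a.s.}{\to}\int f\, d\mathcal{N}(0,U)$ for every bounded continuous~$f$. Concretely, fixing $t\in\mathbb{R}$, I would aim to prove
\[
E\!\left[e^{itM_{n,\infty}}\,\big|\,\mathcal{G}_n\right]\ \stackrel{a.s.}{\longrightarrow}\ e^{-t^2 U/2}.
\]

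Since $\mathcal{G}_n=\mathcal{F}_{n,k_n}$, the first block $M_{n,k_n}=\sum_{j=1}^{k_n}X_{n,j}$ is $\mathcal{G}_n$-measurable and satisfies $|M_{n,k_n}|\leq k_n X_n^*\to 0$ a.s.\ by hypothesis, so $e^{itM_{n,k_n}}\to 1$ a.s. Hence it is enough to analyse the tail $T_n:=M_{n,\infty}-M_{n,k_n}=\sum_{j>k_n}X_{n,j}$, which, thanks to the $L^1$ convergence of $(M_{n,h})_h$, is itself the $L^1$-limit of the martingale $(M_{n,k_n+h}-M_{n,k_n})_{h\geq 0}$ relative to $(\mathcal{F}_{n,k_n+h})_{h\geq 0}$. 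Its quadratic variation is
\[
\sum_{j>k_n}X_{n,j}^2\ =\ U_n-\sum_{j=1}^{k_n}X_{n,j}^2\ \stackrel{a.s.}{\longrightarrow}\ U,
\]
because $\sum_{j=1}^{k_n}X_{n,j}^2\leq (k_nX_n^*)\cdot X_n^*\to 0$ a.s.

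To run the CLT-style argument I would use the standard telescoping identity
\[
e^{itT_n}\ =\ \prod_{j>k_n}\!\bigl(1+itX_{n,j}-\tfrac{t^2}{2}X_{n,j}^2+R_{n,j}\bigr),
\qquad |R_{n,j}|\leq \tfrac{|t|^3}{6}|X_{n,j}|^3\leq \tfrac{|t|^3}{6}X_n^*\,X_{n,j}^2,
\]
compare it with the deterministic product $\prod_{j>k_n}\exp(-\tfrac{t^2}{2}X_{n,j}^2)$, and then take conditional expectations with respect to $\mathcal{G}_n$. The linear increments $itX_{n,j}$ vanish one at a time upon successive conditioning because $(X_{n,j})_{j>k_n}$ are martingale differences with respect to $(\mathcal{F}_{n,j})$; the quadratic part produces an exponential whose argument converges a.s.\ to $-t^2U/2$ by the tail identity above and the fact that $X_n^*\to 0$ so that $\log(1-\tfrac{t^2}{2}x)\sim -\tfrac{t^2}{2}x$ uniformly on the relevant range. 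The remainder $\sum_{j>k_n}|R_{n,j}|\leq \tfrac{|t|^3}{6}X_n^*\!\sum_{j>k_n}X_{n,j}^2$ tends to $0$ a.s.\ because $X_n^*\to 0$ and the tail sum is bounded. Finally, since $U$ is $\mathcal{U}$-measurable and $\mathcal{U}$ is asymptotic for $\mathcal{G}$, any $\mathcal{U}$-measurable integrable function of the limit is preserved under conditioning in the limit, giving the required $e^{-t^2U/2}$.

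The main technical obstacle is to rigorously justify exchanging conditional expectation with the infinite product and to control the cross terms that appear when telescoping. This requires passing from $L^1$ convergence of $(M_{n,h})_h$ to uniform-integrability-type arguments for the truncated products; the hypothesis that $(X_n^*)_n$ is dominated in $L^1$ is precisely what allows dominated convergence to bound the remainders and to exchange limits, while the condition $k_nX_n^*\to 0$ is what makes the ``pre-$k_n$'' part negligible without spoiling the quadratic variation. Once these uniform bounds are in place, the standard martingale CLT argument, now read conditionally on $\mathcal{G}_n$ and exploiting the $\mathcal{U}$-measurability of $U$, yields the desired almost sure convergence of conditional characteristic functions and hence the claim.
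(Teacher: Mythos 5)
The paper does not prove this statement: it is quoted verbatim as Theorem~A.1 of \cite{crimaldi-2009} and used as a black-box input, so there is no internal proof to compare against. I therefore assess your proposal on its own merits.

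Your architecture is the right one and is, in fact, the standard route for a McLeish-type martingale CLT adapted to almost sure conditional convergence: pass to characteristic functions (with a countable dense set of parameters $t$, which suffices to identify the Gaussian kernel), split $M_{n,\infty}=M_{n,k_n}+T_n$, kill the head via $|M_{n,k_n}|\le k_nX_n^*\to 0$ and $\sum_{j\le k_n}X_{n,j}^2\le (k_nX_n^*)X_n^*\to 0$, and control the tail by comparing $e^{itT_n}$ with $W_n:=\prod_{j>k_n}(1+itX_{n,j})$ using $E[W_n\mid\mathcal G_n]=1$ and the cubic Taylor remainder bounded by $X_n^*\sum_j X_{n,j}^2$.

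The genuine gap is in the final passage, which you compress into a single sentence. After the Taylor/telescoping step you are left with
$E[W_n\,e^{-t^2 U_n^{\mathrm{tail}}/2}\mid\mathcal G_n]$ up to an a.s.\ vanishing error, where the integrand is bounded in modulus by~$1$. It does \emph{not} follow from $E[W_n\mid\mathcal G_n]=1$ and $U_n\to U$ that this conditional expectation tends to $e^{-t^2 U/2}$: $W_n$ is a complex number of possibly oscillating argument that does not converge pointwise, and $U_n$ is not $\mathcal G_n$-measurable, so you cannot simply pull the variance factor out of the conditional expectation. One must replace $U_n$ by the $\mathcal U$-measurable limit $U$, bound the resulting error $|W_n e^{-t^2 U_n/2}(1-e^{t^2(U_n-U)/2})|$ (which needs care because no $L^1$ domination of $U_n-U$ is assumed), and then show separately that $E[W_n\,e^{-t^2U/2}\mid\mathcal G_n]\to e^{-t^2U/2}$ by exploiting the asymptotic property of $\mathcal U$ together with $E[W_n\mid\mathcal G_n]=1$. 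This last step is the crux of the theorem and it is precisely what the hypotheses about $\mathcal U$, the $L^1$-domination of $(X_n^*)$, and the $\mathcal U$-measurability of $U$ conspire to make possible; your proposal only gestures at it. In addition, the phrase ``the linear increments vanish one at a time upon successive conditioning'' misstates the mechanism: it is the full product $W_n=\prod(1+itX_{n,j})$ whose conditional expectation telescopes to $1$, not the terms of the bracketed expansion you wrote, which also contain $-\tfrac{t^2}{2}X_{n,j}^2$ and $R_{n,j}$ that do not vanish under conditioning. So the plan is sound but the proof, as written, has a missing argument exactly where the real difficulty lies.
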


\end{document}